\title{Disjointness-preserving operators and isospectral Laplacians}
\author{Wolfgang~Arendt}
\author{James~B.~Kennedy}
\address{Wolfgang~Arendt, Institut f\"ur Angewandte Analysis, Universit\"at Ulm, Helmholtzstr.~18, D-89069 Ulm, Germany}
\email{wolfgang.arendt@uni-ulm.de}
\address{James~B.~Kennedy, Grupo de F\'isica Matem\'atica, Faculdade de Ci\^encias, Universidade de Lisboa, Campo Grande, Edif\'icio~C6, P-1749-016 Lisboa, Portugal}
\email{jbkennedy@fc.ul.pt}
\newtheorem{theorem}{Theorem}[section]
\newtheorem{lemma}[theorem]{Lemma}
\newtheorem{proposition}[theorem]{Proposition}
\newtheorem{corollary}[theorem]{Corollary}
\theoremstyle{remark}
\newtheorem{remark}[theorem]{Remark}
\newtheorem{definition}[theorem]{Definition}
\newtheorem{example}[theorem]{Example}
\numberwithin{equation}{section}
\numberwithin{figure}{section}
\newcommand{\R}{\mathbb{R}}
\newcommand{\N}{\mathbb{N}}
\newcommand{\C}{\mathbb{C}}
\newcommand{\Z}{\mathbb{Z}}
\newcommand{\DLL}{\Delta^D_{L^2 (\Omega)}}
\newcommand{\NLL}{\Delta^N_{L^2 (\Omega)}}
\newcommand{\DLC}{\Delta^D_{C_0 (\Omega)}}
\newcommand{\NLC}{\Delta^N_{C (\overline{\Omega})}}
\DeclareMathOperator{\dist}{dist}
\DeclareMathOperator{\supp}{supp}
\DeclareMathOperator{\capacity}{cap}
\DeclareMathOperator{\interior}{int}
\DeclareMathOperator{\diam}{diam}
\newcommand*{\gre}{>\!\!\!>}
\begin{document}

\begin{abstract}
All the known counterexamples to Kac' famous question ``can one hear the shape of a drum'', i.e., does isospectrality of two Laplacians on domains imply that the domains are congruent, consist of pairs of domains composed of copies of isometric building blocks arranged in different ways, such that the unitary operator intertwining the Laplacians acts as a sum of overlapping ``local'' isometries mapping the copies to each other.

We prove and explore a complementary positive statement: if an operator intertwining two appropriate realisations of the Laplacian on a pair of domains preserves disjoint supports, then under additional assumptions on it generally far weaker than unitarity, the domains are congruent. We show this in particular for the Dirichlet, Neumann and Robin Laplacians on spaces of continuous functions and on $L^2$-spaces.
\end{abstract}

\thanks{\emph{Mathematics Subject Classification} (2010). 35P05 (35J05, 35R30, 47B65, 47F05, 58J53)}

\thanks{\emph{Key words and phrases}. Laplacian, isospectral domains, disjointness-preserving operator, intertwining operator, Dirichlet boundary conditions, Neumann boundary conditions.}

\thanks{The work of the second-named author was supported by the Funda{\c{c}}{\~a}o para a Ci{\^e}ncia e a Tecnologia, Portugal, via the program ``Investigador FCT'', reference IF/01461/2015, and project PTDC/MAT-CAL/4334/2014, as well as a fellowship of the Alexander von Humboldt Foundation, Germany. The second-named author would also like to express his thanks for the generous hospitality afforded to him during very pleasant visits to Ulm University, where part of the paper was written. Both authors extend their thanks to Moritz Gerlach for providing the domains in Figure~\ref{fig:isospectral-pair}, which first appeared in \cite[Figure~1]{arendt14} in a slightly different form.}

\date{\today}

\maketitle

\section{Introduction}
\label{sec:intro}

One cannot hear the shape of a drum. More than 25 years have elapsed since Gordon, Webb and Wolpert \cite{gordon92} answered Kac' famous question in the negative: there exist pairs of planar domains $\Omega_1, \Omega_2 \subset \R^2$ which are not congruent to each other, but whose Dirichlet or Neumann Laplacians have the same spectra and thus which ``sound the same''; see \cite{berard93,buser94,chapman95,giraud10} for further expositions.

On the other hand, since Kac asked his question in the 1960s \cite{kac66} a huge body of literature has developed around proving positive answers within certain special classes of domains or manifolds, or for certain properties weaker than congruence; we refer to the survey \cite{datchev13}, the recent introduction \cite{lu15}, and mention other interesting contributions from the last few years \cite{grieser13,hezari12,lu16,zelditch09}, as well as the references therein, for a glimpse into the current state of affairs. A typical approach, following a broad scheme set out by Kac himself, is to extract information from formulae such as heat and wave traces which relate normalised sums or other combinations of eigenvalues to geometric properties of the domain.

Let us start with something simpler: what would a general positive answer look like? Suppose $\Omega_1$ and $\Omega_2$ are isospectral for some realisation $\Delta_{\Omega_i}$ of the Laplacian, $i=1,2$ (say, in the most natural setting of $L^2$-spaces). Then the canonical unitary operator $U: L^2(\Omega_1) \to L^2 (\Omega_2)$ defined by mapping the $n$th normalised eigenfunction $\psi_n$ of $-\Delta_{\Omega_1}$ on $\Omega_1$ to its counterpart $\varphi_n$ on $\Omega_2$,
\begin{displaymath}
	U\psi_n = \varphi_n\qquad \text{for all } n \geq 1,
\end{displaymath}
which is well defined for any pair of domains, has the additional property that it \emph{intertwines} the respective Laplacians, as well as the corresponding heat semigroups: if $f$ is in the domain $D(\Delta_{\Omega_1})$ of $\Delta_{\Omega_1}$, then $Uf \in D(\Delta_{\Omega_2})$, and
\begin{displaymath}
	U (\Delta_{\Omega_1} f) = \Delta_{\Omega_2} (Uf) \qquad \text{for all } f \in D(\Delta_{\Omega_1});
\end{displaymath}
in terms of the semigroups $(e^{t\Delta_{\Omega_i}})_{t\geq 0}$, this reads
\begin{equation}
\label{eq:semigroup-intertwining}
	U (e^{t \Delta_{\Omega_1}}f) = e^{t \Delta_{\Omega_i}} (Uf) \qquad \text{for all } f\in L^2(\Omega_1) \text{ and all } t>0.
\end{equation}
A corresponding statement must also hold \emph{mutatis mutandis} for $U^{-1}$. We also see immediately that the existence of such an intertwining operator $U$ is \emph{equivalent} to the isospectrality of the domains.

Thus Kac' question is equivalent to asking whether the existence of a unitary operator intertwining the Laplacians implies that the domains are congruent; in other words, whether this mapping $U$ must take the form
\begin{equation}
\label{eq:wuensch-form}
	Uf = f \circ \tau
\end{equation}
for all $f$ in the domain of the Laplacian on $L^2(\Omega_1)$ (and hence for all $f \in L^2 (\Omega_1)$, by density), for some isometry $\tau : \R^d \to \R^d$ for which $\tau (\Omega_2) = \tau (\Omega_1)$.\footnote{There is a slight technicality here: if $\Omega_1$ is an open set and $\Omega_2$ differs from it by a \emph{set of capacity zero}, then the Laplacians on $L^2 (\Omega_1)$ and $L^2 (\Omega_2)$ coincide without the sets being congruent; see for example \cite[Section~3]{arendt02}. We may ignore this by always selecting, among the equivalence class of all open sets differing by a set of capacity zero, the one which is regular in capacity.}

While the answer is clearly no, our point of departure is the observation that in all the counterexamples found two decades ago subsequent to the work of Gordon {\it et al} \cite{berard93,buser94,chapman95,gordon92}, the explicit intertwining operator $U$ mapping eigenfunctions on $\Omega_1$ to eigenfunctions on $\Omega_2$ always has a very special form: it is always a \emph{finite sum of overlapping ``local'' isometries}, each locally taking the form \eqref{eq:wuensch-form}. Let us explain this using the principal example (the ``propellers'') of Buser \emph{et al} \cite[Section 2]{buser94}, which was revisited in \cite{arendt14}. We start out with a basic building block, a triangle $T$; in the words of B\'erard \cite{berard93} this is the \emph{brique fondamentale}.
\begin{figure}[H]
\begin{center}
\begin{tikzpicture}[scale=0.7]
\draw[very thick] (0,0) -- (5,0);
\draw[very thick,densely dashed] (0,0) -- (1.8,2.4);
\draw[very thick,dotted] (1.8,2.4) -- (5,0);
\end{tikzpicture}
\caption{The triangle $T$.}
\label{fig:triangle}
\end{center}
\end{figure}
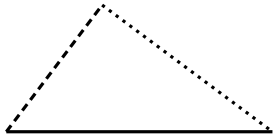
By reflecting the triangle in two different ways as in Figure~\ref{fig:isospectral-pair}, we obtain two domains $\Omega_1$ and $\Omega_2$, each composed of seven copies of $T$ as indicated; we will call these copies $T_1,\ldots,T_7$, abbreviated to $1,\ldots,7$ in the figure.
\begin{figure}[ht!]     
\vspace*{3mm} 
\centering
\epsfig{file=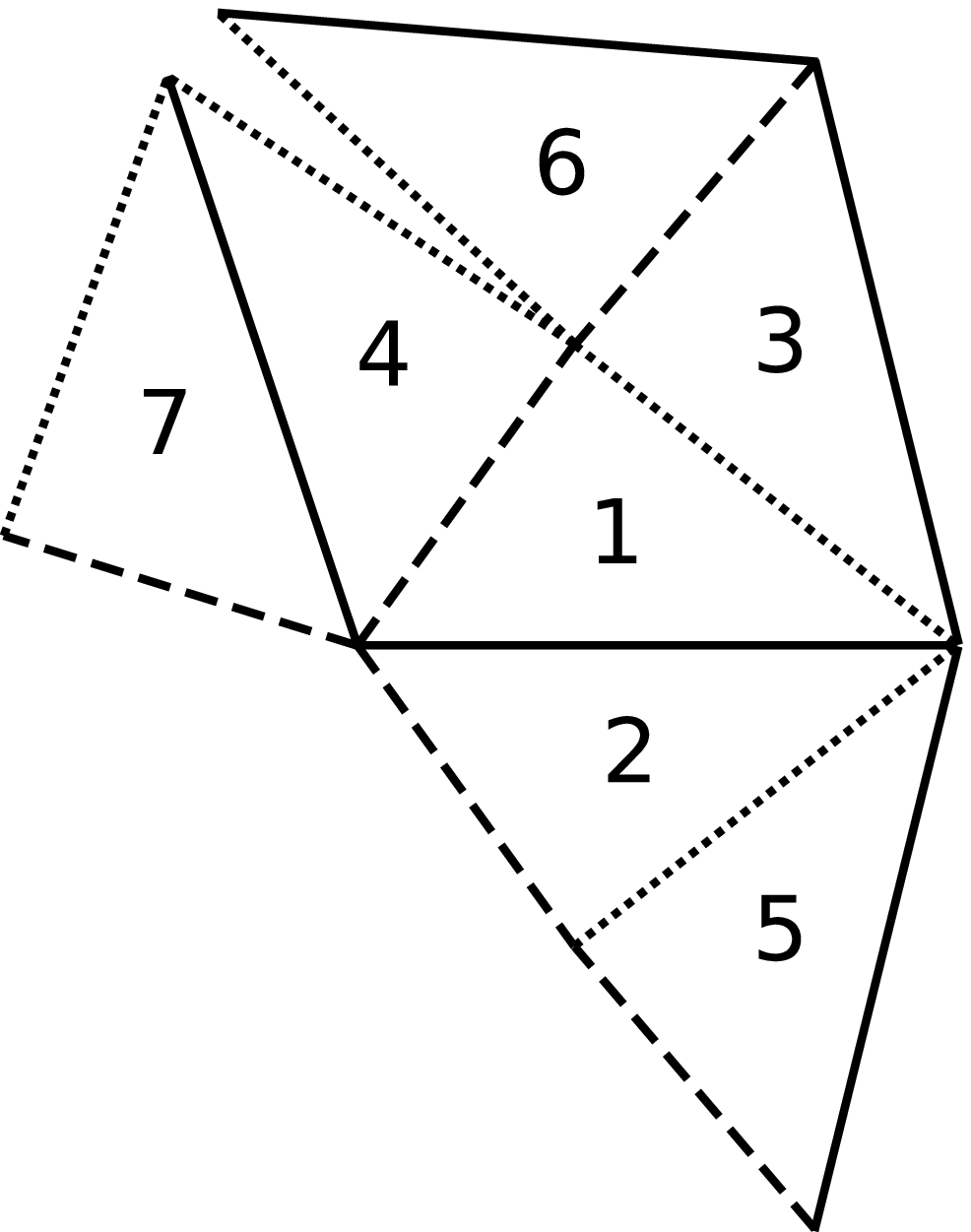, height=45mm} \hspace{20mm}
\epsfig{file=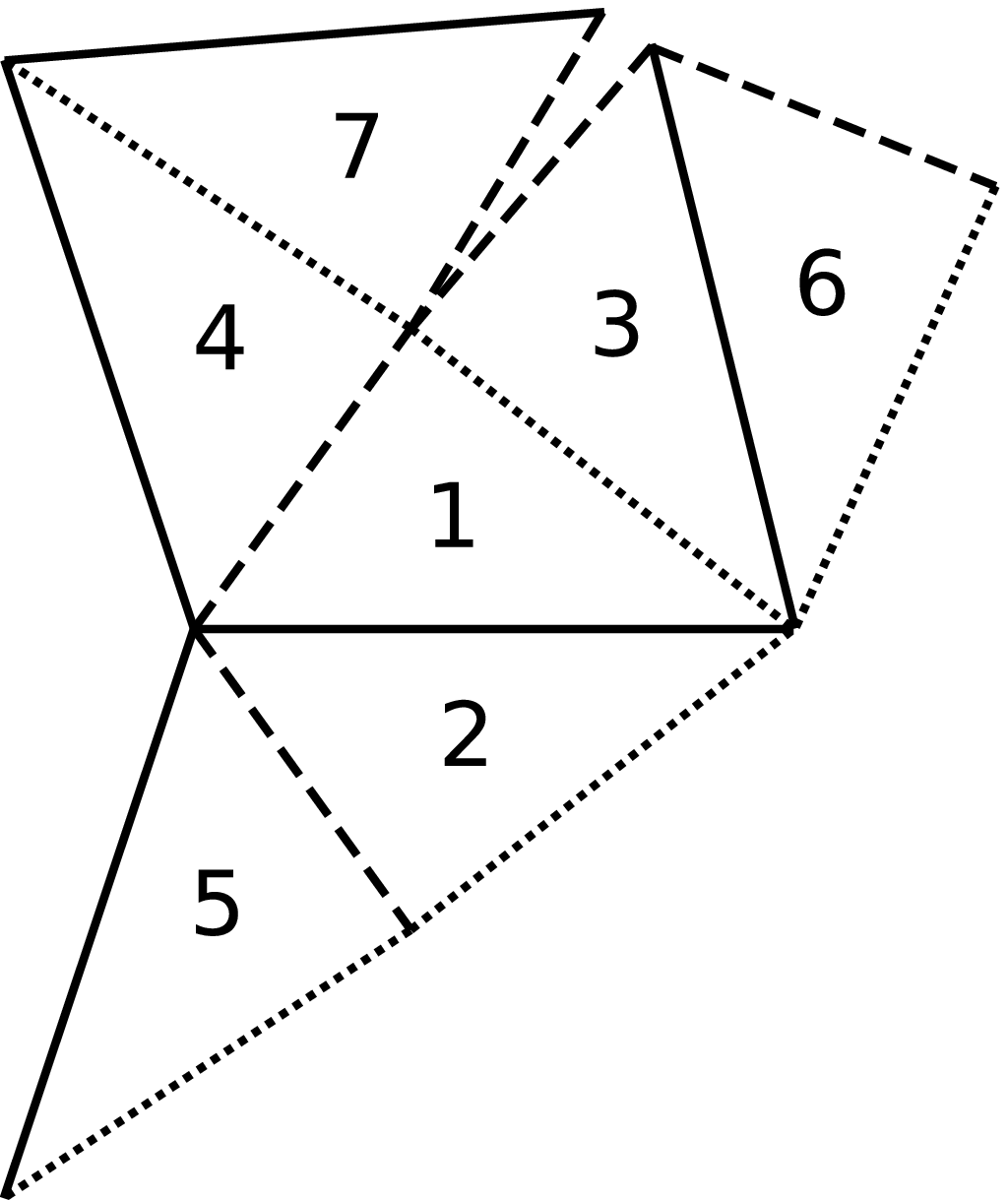, height=45mm}
\mbox{}
\makebox[0pt]{\raisebox{10mm}[0pt][0pt]{\mbox{}\hspace*{-180mm}$\Omega_1$ }}
\makebox[0pt]{\raisebox{10mm}[0pt][0pt]{\mbox{}\hspace*{-17mm}$\Omega_2$ }}
\caption{Two isospectral domains composed of seven isometric triangles, as used in \cite{arendt14} and based on the ``warped propeller'' domains of \cite{buser94}.}
\label{fig:isospectral-pair}
\end{figure}
If $T$ is a general scalene triangle, then $\Omega_1$ and $\Omega_2$ are not congruent. Nevertheless, there is a unitary operator $U:L^2 (\Omega_1) \to L^2 (\Omega_2)$ intertwining the Dirichlet Laplacians on $L^2(\Omega_1)$ and $L^2(\Omega_2)$. 
We identify each of the spaces $L^2 (\Omega_i)$, $i=1,2$, with $L^2(T)^7$ in a different way, by writing $f = (f_1,\ldots,f_7)^T \in L^2 (\Omega_i)$ if $f_j \in L^2(T_j)$, $j=1,\ldots,7$. Then $U$ is given locally as a sum of isometries: for each triangle subset of $\Omega_2$, $U$ is given by mapping three given triangles of $\Omega_1$ onto it, i.e., up to a normalising constant, $U$ has the form $(Uf)_i = (-1)^{j_1} f_{i_1} + (-1)^{j_2} f_{i_2} + (-1)^{j_3} f_{i_3}$ for each $i=1,\ldots,7$, for appropriate numbers $i_1,i_2,i_3\in\{1,\ldots,7\}$ and appropriate signs $j_1(i),j_2(i),j_3(i) \in \{-1,1\}$, which are chosen in such a way that $H^1_0 (\Omega_1)$ is mapped into $H^1_0 (\Omega_2)$; cf.\ \cite[Section 6]{arendt14}, \cite[Section 2]{buser94} or \cite[Section II]{giraud10} for a more detailed explanation-cum-proof. The same principle holds for all the other examples, for both the Dirichlet and Neumann Laplacians, known from \cite{berard93,buser94,chapman95,gordon92} etc.; see \cite[Section IV]{giraud10} and also the classifications in \cite{okada01,schillewaert11}.

In the present contribution, we will explore a slight variant of Kac' inverse problem, which seems quite natural in light of the above observation, and which was already raised in a similar form in the introduction to \cite{arendt14}. Namely, we ask what happens if one expressly prohibits such an overlapping: we are interested in operators $U$ intertwining Laplacians on two domains which have the additional property of \emph{preserving disjoint supports}: if $f,g$ are functions defined on $\Omega_1$ such that
\begin{displaymath}
	f\cdot g = 0
\end{displaymath}
pointwise everywhere (or almost everywhere) on $\Omega_1$, then we demand that
\begin{displaymath}
	Uf \cdot Ug = 0
\end{displaymath}
on $\Omega_2$. We will see that a unitary operator that intertwines the Dirichlet Laplacians on $L^2$ on two different domains and which is additionally disjointness-preserving already forces the domains to be congruent (see Corollary~\ref{cor:dirichlet-l2-kac-disj}). But now there is no particular reason to assume unitarity: we shall attempt to explore systematically the question of whether, and under what circumstances, a (general) disjointness-preserving intertwining operator, which is not necessarily unitary, is in fact given by an isometry. Of course, if $U$ is not unitary, then it does not necessarily have norm one, and so its ``desired'' form becomes not \eqref{eq:wuensch-form} but more generally
\begin{equation}
\label{eq:allgemeine-wuensch-form}
	Uf = cf\circ \tau
\end{equation}
for an isometry $\tau : \Omega_2 \to \Omega_1$ and a nonzero constant $c \in \C$. A further consequence of dropping unitarity is that we are no longer restricted to $L^2$-spaces (cf.\ Proposition~\ref{prop:unitary}).

We will begin by defining the realisations of the Laplacian which are relevant for us -- the Dirichlet and Neumann Laplacians, both on spaces of continuous functions (easier and more natural to work with) and on $L^2$ spaces (more natural from the point of view of the abstract theory) -- in Section~\ref{sec:notation}, where we will also define precisely what we mean by ``disjointness-preserving'' and ``intertwining operators''. In Section~\ref{sec:disjointness-preserving} we will present two introductory results, which show the effect of these two assumptions separately: roughly speaking, under the right technical assumptions, a disjointness-preserving operator $U$ (on spaces of continuous functions, say) always has the form $Uf(y) = h(y) f(\tau (y))$, $y \in \Omega_2$, for locally continuous functions $h:\Omega_2 \to \C$ and $\tau: \Omega_2 \to \Omega_1$; see Lemma~\ref{lem:disjointness-form}. If $U$ has this form and additionally intertwines the Laplacians (even just on the set of test functions), then $h$ is forced to be constant and $\tau$ an isometry -- but, again, this is only ``local'' in a certain sense; see Lemma~\ref{lem:intertwining-form}.

In order to achieve ``global'' results, more is needed, as simple examples at the end of Section~\ref{sec:disjointness-preserving} show. So, in the subsequent sections, we consider concrete realisations of the Dirichlet and Neumann Laplacians more carefully: we give conditions under which a disjointness-preserving operator $U$ intertwining Dirichlet Laplacians defined on the space $C_0$ (the closure of $C_c^\infty$ with respect to the sup norm $\|\cdot\|_\infty$; see Definition~\ref{def:c0}) has the form \eqref{eq:allgemeine-wuensch-form}, and in particular $\Omega_1$ and $\Omega_2$ are congruent, in Section~\ref{sec:dirichlet}: see Theorems~\ref{thm:disj-pres-dirichlet-char-c0} and~\ref{thm:disj-pres-dirichlet-c0-isom} and Corollary~\ref{cor:dirichlet-c0-kac-disj}. Corresponding results for the Neumann and Robin Laplacians are obtained in Section~\ref{sec:neumann}; see Theorems~\ref{thm:disj-pres-neumann-char-c}, \ref{thm:disj-pres-neumann-c-isom} and~\ref{thm:disj-pres-neumann-c-c1-isom} and Corollary~\ref{cor:neumann-c0-kac-disj} in particular. Let us state a theorem which summarises our results from Section~\ref{sec:dirichlet} in somewhat simplified form and under somewhat stronger assumptions.

\begin{theorem}
\label{thm:summary}
Suppose $\Omega_1, \Omega_2 \subset \R^d$ are bounded, Lipschitz domains such that $\Omega_1$ is connected and suppose $U : C_0 (\Omega_1) \to C_0 (\Omega_2)$ is a bounded linear operator satisfying
\begin{itemize}
\item[(a)] the disjointness-preserving condition ``$f\cdot g = 0$ implies $(Uf)\cdot (Ug)=0$, for all $f,g\in C_0 (\Omega_1)$'', and
\item[(b)] the intertwining property $U(\Delta f) = \Delta (Uf)$ in the sense of distributions, for all $f\in C_c^\infty (\Omega_1)$.
\end{itemize}
If in addition at least one of the following conditions is satisfied
\begin{itemize}
\item[(1)] $|\Omega_1|=|\Omega_2|$, or
\item[(2)] $U$ has dense range, or
\item[(3)] $\Omega_2$ is connected and the Dirichlet Laplacians on $\Omega_1$ and $\Omega_2$ have the same first eigenvalue,
\end{itemize}
then $\Omega_1$ and $\Omega_2$ are congruent and $U$ has the form \eqref{eq:allgemeine-wuensch-form}.
\end{theorem}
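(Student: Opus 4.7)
The plan is first to extract the weighted-composition structure of $U$ and then to globalise it using each of the three auxiliary hypotheses. By Lemma~\ref{lem:disjointness-form} applied to hypothesis (a), I obtain an open set $V\subseteq\Omega_2$, a continuous non-vanishing function $h\colon V\to\C$ and a continuous map $\tau\colon V\to\Omega_1$ with
\[
(Uf)(y)=h(y)\,f(\tau(y))\quad\text{for }y\in V,\qquad (Uf)(y)=0\text{ for }y\in\Omega_2\setminus V.
\]
Combining this with (b) and invoking Lemma~\ref{lem:intertwining-form} on each connected component $V_\alpha$ of $V$, I conclude that $h\equiv c_\alpha\in\C\setminus\{0\}$ on $V_\alpha$ and that $\tau|_{V_\alpha}$ is a local isometry into $\Omega_1$; since a local isometry between open subsets of $\R^d$ is the restriction of a unique rigid motion $T_\alpha\in\mathrm{Isom}(\R^d)$, each $V_\alpha$ is mapped isometrically into $\Omega_1$ by a single $T_\alpha$, and $U$ acts on it by multiplication by $c_\alpha$ followed by composition with $T_\alpha$. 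The continuity of $Uf$ across $\partial V_\alpha\cap\Omega_2$, together with $h\equiv c_\alpha\ne 0$, additionally forces the boundary-matching condition $T_\alpha(\partial V_\alpha\cap\Omega_2)\subseteq\partial\Omega_1$.

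To promote this piecewise-rigid description to the asserted global congruence and to form \eqref{eq:allgemeine-wuensch-form}, three items must still be verified: (i) $V=\Omega_2$; (ii) $V$ has only one connected component; (iii) $T_1(\Omega_2)=\Omega_1$. Each of (1)--(3) supplies (i)--(iii) by a different mechanism. Under (3), connectedness of $\Omega_2$ gives (i) and (ii) at once, and strict domain monotonicity of the first Dirichlet eigenvalue applied to the chain $\lambda_1(\Omega_1)=\lambda_1(\Omega_2)=\lambda_1(T_1(\Omega_2))$ (with the last equality by isometry) delivers (iii). Under (2), density of $\mathrm{range}(U)$ forces (i) because every range element vanishes on the closed set $\Omega_2\setminus V$, and a duality argument based on the identity $U^*\delta_y=c_\alpha\,\delta_{T_\alpha(y)}$ rules out both the pathology of distinct components mapping onto overlapping regions of $\Omega_1$ and a proper inclusion $T_1(\Omega_2)\subsetneq\Omega_1$, yielding (ii) and (iii). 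Under (1), I would combine the measure identity $|\Omega_1|=|\Omega_2|$ with $|T_\alpha(V_\alpha)|=|V_\alpha|$, the inclusion $\bigcup_\alpha T_\alpha(V_\alpha)\subseteq\Omega_1$, the boundary-matching condition and the connectedness of $\Omega_1$ to conclude that the images $T_\alpha(V_\alpha)$ tile $\Omega_1$ up to a null set and that only one tile is possible, giving (i)--(iii) simultaneously. In every case, once (i)--(iii) are in place, $\tau:=T_1$ is a bijective rigid motion carrying $\Omega_2$ onto $\Omega_1$ and $Uf=c_1\cdot f\circ\tau$, which is the asserted form \eqref{eq:allgemeine-wuensch-form}.

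I expect the hardest step to be the globalisation under (1): the volume identity alone is rather weak and by itself excludes neither overlaps between the $T_\alpha(V_\alpha)$, nor a shortfall $V\subsetneq\Omega_2$, nor an under-coverage $\bigcup_\alpha T_\alpha(V_\alpha)\subsetneq\Omega_1$. The boundary-matching condition and the connectedness of $\Omega_1$ will have to be leveraged in an essential way---any proper open ``missing piece'' of $\Omega_1$, once the volume accounting is imposed, would force $\Omega_1$ to split into disjoint open subsets, which is incompatible with its connectedness. The cases (2) and (3) are cleaner, relying respectively on a short duality argument against point masses and on strict monotonicity of $\lambda_1$ under strict inclusion of Lipschitz subdomains.
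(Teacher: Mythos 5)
Your opening moves (Lemma~\ref{lem:disjointness-form} followed by Lemma~\ref{lem:intertwining-form} on each component of $\Omega_2'$) match the paper's, but there is a genuine gap where you pass from the local to the global description: you never establish that each component $V_\alpha$ is mapped by $T_\alpha$ \emph{onto} $\Omega_1$, and this surjectivity is the linchpin of the paper's argument (it is the whole content of Theorem~\ref{thm:disj-pres-dirichlet-char-c0}). Your boundary-matching condition only covers $\partial V_\alpha\cap\Omega_2$; the paper's proof also handles $\partial V_\alpha\cap\partial\Omega_2$, where one uses that $Uf$ lies in $C_0(\Omega_2)$ and hence vanishes on $\partial\Omega_2$. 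With the \emph{full} inclusion $T_\alpha(\partial V_\alpha)\subset\partial\Omega_1$ in hand, $T_\alpha(V_\alpha)$ is relatively open and closed in the connected set $\Omega_1$, so $T_\alpha(V_\alpha)=\Omega_1$. Without it, $T_\alpha(V_\alpha)$ could a priori be a proper sub-piece of $\Omega_1$, and then none of your three globalisations closes: under (1) the tiling/volume bookkeeping is underdetermined (as you yourself observe); under (2) your duality argument against point masses needs two components whose images in $\Omega_1$ actually overlap, which is only guaranteed once each image is all of $\Omega_1$, and it also leaves open the under-coverage $T_1(\Omega_2)\subsetneq\Omega_1$.

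Once surjectivity is known, the paper's endgame is much shorter than what you sketch: under (1), each $V_\alpha$ has measure $|\Omega_1|=|\Omega_2|$, so there is exactly one component, $|\Omega_2\setminus V_1|=0$, and regularity in measure (which follows from the Lipschitz assumption) gives $V_1=\Omega_2$; under (2), two components would give $y_1\neq y_2$ with $T_1(y_1)=T_2(y_2)$ and hence a nontrivial relation $\alpha g(y_1)+\beta g(y_2)=0$ on the range, killing density, and a point of $\Omega_2\setminus V$ would do the same. Separately, your treatment of (3) contains an error: connectedness of $\Omega_2$ does \emph{not} imply $V=\Omega_2$ or that $V$ has one component ($\Omega_2'$ can be a proper open subset of a connected $\Omega_2$; cf.\ Example~\ref{ex:not-yet}(c)). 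The correct route, which is Theorem~\ref{thm:equal-dirichlet-eigenvalues} specialised to $k=1$, is to apply strict monotonicity to the inclusion $\omega_i\subset\Omega_2$ itself: each component is congruent to $\Omega_1$, hence has first Dirichlet eigenvalue $\lambda_1^D(\Omega_1)=\lambda_1^D(\Omega_2)$, and strict monotonicity (or the analyticity argument in the proof of Theorem~\ref{thm:equal-dirichlet-eigenvalues}) forces $\omega_i=\Omega_2$. Your strict-monotonicity instinct is the right one; it is just aimed at the wrong inclusion and deployed after an unjustified step.
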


We return to considering the Dirichlet Laplacian on $L^2$-spaces in Section~\ref{sec:dirichlet-l2}, where we obtain similar results; although somewhat different techniques are required, see Theorem~\ref{thm:disj-pres-dirichlet-l2-isom}; and finish by showing that a disjointness-preserving unitary operator which intertwines Dirichlet, Neumann or Robin Laplacians on $L^2$ does in fact have the form \eqref{eq:wuensch-form}, in Corollary~\ref{cor:dirichlet-l2-kac-disj} and Theorem~\ref{thm:neumann-robin-l2-kac-disj}.

Thus we obtain a positive answer to versions of Kac' question in various settings under the additional assumption that our intertwining operators are disjointness-preserving, complementing the counterexamples described above. We also take this opportunity to recall that Kac' problem seems to be completely open for the Robin Laplacians: there are no known pairs of domains which are isospectral for the Robin Laplacian, say, for a common boundary constant $\beta \neq 0$ (see the discussion in \cite{arendt14}).

While motivated in large part by the above-mentioned observation about the form of the known counterexamples, the current note also follows in the tradition of earlier works of one of the present authors \cite{arendt01,arendt02,arendt12}, which have also been extended recently in other directions \cite{keller15,lenz18}. In these works, as here, one investigates operators having an intertwining property such as \eqref{eq:semigroup-intertwining} but where unitarity is replaced by some other property. For example, in \cite{arendt02}, it is shown that if there is an operator $U$ which intertwines Dirichlet, Neumann or Robin Laplacians and which is an \emph{order isomorphism} on $L^2$ (i.e., $U: L^2 (\Omega_1) \to L^2 (\Omega_2)$ is linear, bijective and $Uf \geq 0$ almost everywhere in $\Omega_2$ if and only if $f \geq 0$ almost everywhere in $\Omega_1$), then $\Omega_1$ and $\Omega_2$ are congruent, and up to a multiplicative constant $U$ has the form \eqref{eq:wuensch-form}. This can be extended to $p\neq 2$ if one replaces order isomorphism with \emph{isometric isomorphism}. Since under these assumptions the intertwining order isomorphism maps positive solutions to positive solutions of the associated heat equation, this may be interpreted as saying that ``diffusion determines the domain''.

These results are extended to smooth manifolds in \cite{arendt12}, while \cite{arendt01} deals with the corresponding question on the space of continuous functions vanishing at the boundary and at infinity. In \cite{keller15}, a similar analysis was given on weighted discrete graphs; this was extended recently in \cite{lenz18} to the much more general abstract setting of pairs of Dirichlet forms intertwined by order isomorphisms on $L^2$-spaces under a wide variety of assumptions.

We will draw certain techniques and some background results from these works, in particular \cite{arendt01,arendt02}. However, disjointness preservation is a much weaker property than that of being an order isomorphism; indeed, the latter is easily seen to imply the former. Moreover, in light of the nature of the known counterexamples, the former is also arguably more natural in the context of isospectrality. Finally, most our principal results (Theorems~\ref{thm:disj-pres-dirichlet-char-c0} and~\ref{thm:disj-pres-neumann-char-c}, and their respective extensions and corollaries) do not actually require our operator $U$ to intertwine the Laplacians: it merely has to have this property on the much smaller space of test functions $C_c^\infty (\Omega_1)$. This is more than just a technicality: this space is not a core for the Dirichlet or Neumann Laplacians; indeed, these are different self-adjoint extensions of the Laplacian on $C_c^\infty (\Omega_1)$. Thus this property does not imply that the actual Dirichlet, Neumann or Robin Laplacians are intertwined; indeed, it suggests that the congruence of the domains is appearing at a much more fundamental level.

%

\section{Notation and Definitions}
\label{sec:notation}

Let $\Omega \subset \R^d$, $d\geq 1$, be an open set which will be fixed throughout this section. We start out by defining the realisations of the Laplacian in which we will be interested. We will consider a total of four: the Laplacians with Dirichlet and Neumann boundary conditions, being realised either on the space $L^2 (\Omega)$ or on an appropriate space of continuous functions. We start with the function spaces we will need.

\begin{definition}
\label{def:c0}
Suppose $\Omega \subset \R^d$ is an open set. 
\begin{itemize}
\item[(a)] We set
\begin{displaymath}
	C_c^\infty (\Omega) := \{ f|_\Omega: f\in C^\infty (\R^d) \text{ and $\supp f$ is compactly contained in } \Omega \},
\end{displaymath}
where $\supp f \subset \R^d$ is the support of $f$, i.e., the closure in $\R^d$ of the set $\{x \in \R^d: f(x)\neq 0\}$.
\item[(b)] We define the space $C_0 (\Omega)$ to be the closure of $C_c^\infty (\Omega)$ with respect to the supremum norm $\|\cdot\|_\infty$, i.e.\ $\|u\|_\infty = \sup_{x\in \Omega} |u(x)|$.
\item[(c)] We set $C_b (\Omega)$ to be the space of bounded and continuous functions on $\Omega$, equipped with the supremum norm $\|\cdot\|_\infty$.
\item[(d)] The space $L^2 (\Omega)$ is the Hilbert space of square integrable Lebesgue measurable functions on $\Omega$, equipped with the usual inner product $\langle\,\cdot\,,\,\cdot\,\rangle$.
\item[(e)] The Sobolev space $H^1 (\Omega)$ is the Hilbert space of $L^2 (\Omega)$-functions whose distributional partial derivatives all lie in $L^2 (\Omega)$; this space will also be equipped with any of the usual equivalent inner products.
\item[(f)] The space $H^1_0 (\Omega)$ is the closure of $C_c^\infty (\Omega)$ with respect to any one of the equivalent $H^1$-norms.
\end{itemize}
\end{definition}

We observe that functions $u \in C_0 (\Omega)$ are, by construction, continuous on $\overline{\Omega}$ and pointwise zero on $\partial\Omega$, and satisfy $\lim_{|x|\to \infty} u(x) = 0$. In particular, $C_0 (\Omega) \subset C_b (\Omega)$. In terms of our operators, we will first consider the $L^2$-case, corresponding to the usual weak formulation.

\begin{definition}
\label{def:dirichlet-laplacian-l2}
Suppose $\Omega \subset \R^d$ is an open set. We define the \emph{Dirichlet Laplacian on $L^2 (\Omega)$}, which we shall denote by $-\DLL$, to be the operator on $L^2(\Omega)$ associated with the sesquilinear form
\begin{equation}
\label{eq:dirichlet-form}
	a(u,v) := \int_\Omega \nabla u \cdot \overline{\nabla v}\,\textrm{d}x,
\end{equation}
for $u,v \in H^1_0 (\Omega)$. That is, $\DLL$ is given by
\begin{displaymath}
\begin{aligned}
	D (\DLL) &= \{ u \in H^1_0 (\Omega): \exists f \in L^2 (\Omega) \text{ s.t.\ } a(u,v)=\langle f,v \rangle 
	\text{ for all } v \in H^1_0 (\Omega) \},\\
	\DLL u &= -f.
\end{aligned}
\end{displaymath}
\end{definition}

We shall also write just $\Delta^D$ or $\Delta^D_\Omega$ if there is no danger of confusion, and observe that the choice of the space $H^1_0 (\Omega)$ encodes the boundary condition in the usual weak sense. It is a routine exercise to show that $\DLL$ is also given by
\begin{displaymath}
\begin{aligned}
	D (\DLL) &= \{ u \in H^1_0 (\Omega): \Delta u \in L^2 (\Omega) \},\\
	\DLL u &= \Delta u,
\end{aligned}
\end{displaymath}
where $\Delta f$ is interpreted in the distributional sense if $f \in L^2 (\Omega)$.

We next consider the corresponding operator on spaces of continuous functions.

\begin{definition}
\label{def:dirichlet-laplacian-c0}
Suppose $\Omega \subset \R^d$ is an open set. The \emph{Dirichlet Laplacian on $C_0 (\Omega)$}, denoted by $-\DLC$, is defined by
\begin{displaymath}
\begin{aligned}
	D (\DLC) &= \{ u \in C_0 (\Omega): \Delta u \in C_0 (\Omega) \},\\
	\DLC u &= \Delta u,
\end{aligned}
\end{displaymath}
where $\Delta u$ is again to be understood in the distributional sense.
\end{definition}

If it is clear that we are in $C_0$, then we shall sometimes write $\Delta^D_\Omega$ or just $\Delta^D$ for this operator. If $\Omega$ is bounded, then $C_0 (\Omega) \subset L^2(\Omega)$ and in fact $\DLC$ is the \emph{part of $\DLL$ in $C_0 (\Omega)$}. This also means that $D(\DLC) \subset H^1_0(\Omega)$; see \cite{arendt01}.

If $\Omega$ has finite measure then the operator $-\DLL$ has compact resolvent and hence a sequence of eigenvalues of the form
\begin{displaymath}
	0 < \lambda_1 (-\DLL) \leq \lambda_2 (-\DLL) \leq \ldots
\end{displaymath}
constituting the entirety of the spectrum $\sigma (-\DLL)$, where each eigenvalue is repeated according to its finite multiplicity (noting that algebraic and geometric multiplicities are always equal) and the associated eigenfunctions may be chosen to form an orthonormal basis of $L^2 (\Omega)$.

\begin{remark}
\label{rem:dirichlet-regular}
A bounded open set $\Omega \subset \R^d$ is called \emph{Dirichlet} (or \emph{Wiener}) regular if for each $g \in C(\partial\Omega)$ there exists a function $u \in C^2(\Omega) \cap C(\overline{\Omega})$ such that $\Delta u = 0$ in $\Omega$ and $u|_{\partial\Omega} = g$. It turns out that the bounded open set $\Omega$ is Dirichlet regular if and only if the first eigenfunction of $\DLL$ is in $C_0 (\Omega)$; in this case, all eigenfunctions are in $C_0 (\Omega)$ and the spectra of $\DLL$ and $\DLC$ coincide. In this case, we will just write $\lambda_k^D (\Omega)$ for these eigenvalues. Moreover, the resolvent set of $\DLC$ is non-empty if and only if $\Omega$ is Dirichlet regular, and in this case $\DLC$ generates a holomorphic $C_0$-semigroup on $C_0(\Omega)$. See \cite{arendt99} for more details and further information.
\end{remark}

Similar assertions hold in the case of Neumann and Robin boundary conditions. In the $L^2$-setting, we replace the space $H^1_0 (\Omega)$ with $H^1 (\Omega)$ and for a bounded measurable function $\beta \in L^\infty (\partial\Omega)$ defined on $\partial\Omega$ we introduce the form
\begin{equation}
\label{eq:robin-form}
	a_\beta (u,v) := \int_\Omega \nabla u \cdot \overline{\nabla v}\,\textrm{d}x + \int_{\partial\Omega} \beta uv\,\textrm{d}\sigma
\end{equation}
for $u,v \in H^1(\Omega)$, where $\sigma$ is surface measure on $\partial\Omega$. Clearly, the forms $a$ and $a_0$ agree. We then define the operators associated with the forms $a=a_0$ and $a_\beta$ on $H^1(\Omega)$ as follows.

\begin{definition}
\label{def:neumann-laplacian-l2}
Suppose $\Omega \subset \R^d$ is an open set.
\begin{itemize}
\item[(a)] For a function $u \in H^1 (\Omega)$, we define its distributional outer normal derivative $\frac{\partial u}{\partial \nu}$ to be the unique function $h \in L^2 (\partial\Omega)$, if one exists, such that
\begin{displaymath}
	\int_\Omega \nabla u \cdot \overline{\nabla v} + \Delta u \overline{v}\,\textrm{d}\sigma
	= \int_{\partial\Omega} h\overline{v}\,\textrm{d}x
\end{displaymath}
for all $v \in H^1(\Omega)$.
\item[(b)] The \emph{Neumann Laplacian on $L^2 (\Omega)$}, $-\NLL$, is defined by
\begin{displaymath}
\begin{aligned}
	D (\NLL) &= \{ u \in H^1 (\Omega): \exists f \in L^2 (\Omega) \text{ s.t.\ } a(u,v)=\langle f,v \rangle 
	\text{ for all } v \in H^1 (\Omega) \}\\
	&= \left\{ u \in H^1 (\Omega): \Delta u \in L^2(\Omega),\,\frac{\partial u}{\partial \nu} \text{ exists in $L^2(\Omega)$ and} =0 \right\},\\
	\NLL u &= f = \Delta u,
\end{aligned}
\end{displaymath}
where $a$ is given by \eqref{eq:dirichlet-form}.
\item[(c)] Given a function $\beta \in L^\infty (\partial\Omega)$, we define the \emph{Robin Laplacian associated with $\beta$ on $L^2(\Omega)$}, $-\Delta_{L^2(\Omega)}^\beta$, by
\begin{displaymath}
\begin{aligned}
	D (-\Delta_{L^2(\Omega)}^\beta) &= \{ u \in H^1 (\Omega): \exists f \in L^2 (\Omega) \text{ s.t.\ } a_\beta(u,v)=\langle f,v \rangle 
	\text{ for all } v \in H^1 (\Omega) \}\\
	&= \left\{ u \in H^1 (\Omega): \Delta u \in L^2(\Omega),\,\frac{\partial u}{\partial \nu} \text{ exists in $L^2(\Omega)$ and} = - \beta u \right\},\\
	-\Delta_{L^2(\Omega)}^\beta u &= f = \Delta u,
\end{aligned}
\end{displaymath}
where $a_\beta$ is given by \eqref{eq:robin-form}.
\end{itemize}
\end{definition}
The Neumann Laplacian clearly coincides with the Robin Laplacian when $\beta \equiv 0$, that is, $\NLL = \Delta_{L^2(\Omega)}^0$. Moreover, if it is clear which domain $\Omega$ we mean, and that we are in the $L^2$-setting, then we shall again simply write $\Delta^N$ and $\Delta^\beta$ for the Neumann and Robin Laplacians, respectively. If $\Omega$ satisfies a moderate regularity property, for example, if it is bounded and Lipschitz (i.e., $\partial\Omega$ is locally given by the graph of a Lipschitz continuous function), then $\NLL$ and $\Delta_{L^2(\Omega)}^\beta$ also have compact resolvent and their spectrum are of the same form as the spectrum of $\DLL$, namely
\begin{displaymath}
	0 = \lambda_1 (-\NLL) \leq \lambda_2 (-\NLL) \leq \ldots, \qquad \lambda_1 (-\Delta_{L^2(\Omega)}^\beta) \leq \lambda_2 (-\Delta_{L^2(\Omega)}^\beta) \leq \ldots,
\end{displaymath}
with the eigenvalues having the same properties as before; in particular, the eigenfunctions of each such operator may be chosen to form an orthonormal basis of $L^2(\Omega)$ and so on (see, e.g., \cite[Section~4.2]{bucur17}).

Finally, we wish to define a realisation of the Neumann and Robin Laplacians on spaces of continuous functions, i.e., $C(\overline{\Omega})$. Here we will always assume that $\Omega$ is bounded and Lipschitz, although many definitions can be given for more general domains; and we will also suppose that $\beta \in C(\partial\Omega)$. Under these assumptions, we shall consider \emph{the part of $\NLL$ in $C(\overline{\Omega})$} and \emph{the part of $\Delta_{L^2(\Omega)}^\beta$ in $C(\overline{\Omega})$} (as was done, for example, in \cite[Section~3]{warma06} and \cite{nittka11} for the Robin Laplacian under the assumption $\beta \geq \beta_0 > 0$; note however that the sign of $\beta$ does not enter into the construction, see, e.g., \cite{daners09a}).

\begin{definition}
\label{def:neumann-laplacian-c}
Suppose $\Omega \subset \R^d$ is a bounded open set with Lipschitz boundary.
\begin{itemize}
\item[(a)] The \emph{Neumann Laplacian on $C(\overline{\Omega})$}, $-\NLC$, is defined by
\begin{displaymath}
\begin{aligned}
	D (\NLC) &= \left\{ u \in H^1(\Omega) \cap C(\overline{\Omega}): \Delta u \in L^2(\Omega) \cap 
	C(\overline{\Omega}), \frac{\partial u}{\partial\nu} \in L^2 (\partial\Omega) \text{ and} =0 \right\},\\
	\NLC u &= \Delta u,
\end{aligned}
\end{displaymath}
where $\frac{\partial u}{\partial \nu}$ is as in Definition~\ref{def:neumann-laplacian-l2}.
\item[(b)] Let $\beta \in C(\partial\Omega)$. The \emph{Robin Laplacian on $C(\overline{\Omega})$}, $-\Delta_{C(\overline{\Omega})}^\beta$, is defined by
\begin{displaymath}
\begin{aligned}
	D (\Delta_{C(\overline{\Omega})}^\beta) &= \left\{ u \in H^1(\Omega) \cap C(\overline{\Omega}): \Delta u \in L^2(\Omega) \cap 
	C(\overline{\Omega}), \frac{\partial u}{\partial\nu} \in L^2 (\partial\Omega) \text{ and} = - \beta u \right\},\\
	\Delta_{C(\overline{\Omega})}^\beta u &= \Delta u.
\end{aligned}
\end{displaymath}
\end{itemize}
\end{definition}

If $\Omega$ is bounded and Lipschitz, then every eigenfunction of $\NLL$ and $\Delta_{L^2(\Omega)}^\beta$ is also in $C(\overline{\Omega})$ (every eigenfunction is certainly in $L^\infty (\Omega)$, see, e.g., \cite[Theorem~2.5]{daners09}; now \cite[Theorem~2.2]{warma06} or the arguments of \cite[Lemma~2.1]{bucur10} imply that they are also in $C(\overline{\Omega})$; this may also be deduced from \cite{nittka11}, where it is shown that $\Delta_{C(\overline{\Omega})}^\beta$ generates a holomorphic $C_0$-semigroup on $C(\overline{\Omega})$ for such $\Omega$). Hence the spectra of $\NLL$ and $\NLC$ coincide, as do the spectra of $\Delta_{L^2(\Omega)}^\beta$ and $\Delta_{C(\overline{\Omega})}^\beta$. In this case, we will write $\lambda_n^N (\Omega)$ and $\lambda_n^\beta (\Omega)$ for the corresponding Neumann and Robin eigenvalues, respectively.

We next introduce the two key notions with which we will be working: the notion of an \emph{intertwining operator}, and the notion of a \emph{disjointness-preserving operator}.

\begin{definition}
\label{def:intertwining}
Suppose $X_1$ and $X_2$ are Banach spaces, and $A_1: D(A_1) \subset X_1 \to X_1$ and $A_2: D(A_2) \subset X_2 \to X_2$ are linear operators. We say that $U : X_1 \to X_2$ \emph{intertwines} the operators $A_1$ and $A_2$ if
\begin{equation}
\label{eq:intertwining}
	x \in D(A_1) \implies Ux \in D(A_2) \text{ and } A_2 U x = U A_1 x.
\end{equation}
In this case we call $U$ an \emph{intertwining operator} (for $A_1$ and $A_2$).
\end{definition}

If $A_2$ is closed, a simple density argument shows that $U$ is intertwining whenever there exists a core\footnote{We recall that a core of an operator is a subset of its domain which is dense in that domain with respect to the operator norm.} $D$ of $A_1$ such that $UD \subset D(A_2)$ and $A_2 Ux = U A_1 x$ for all $x \in D$. Actually, we will often work with a weaker intertwining property, namely that \eqref{eq:intertwining} holds for a subset of the operator domain which is not necessarily a core.

\begin{remark}
\label{rem:intertwining}
If $A_j$ generates a $C_0$-semigroup $S_j$ on $X_j$, $j=1,2$, then a bounded linear operator $U: X_1 \to X_2$ intertwines $A_1$ and $A_2$ if and only if
\begin{displaymath}
	S_2(t)U = US_1(t) \qquad \text{for all } t\geq 0.
\end{displaymath}
\end{remark}

We next give an elementary result characterising unitary intertwining operators, which will be very useful in the sequel. It also gives us a natural analogue of them on spaces of continuous functions, where we can no longer talk about unitary operators.

\begin{proposition}
\label{prop:unitary}
Let $\Omega_1 \subset \R^{d_1}$ and $\Omega_2 \subset \R^{d_2}$ be bounded open sets and consider the Dirichlet Laplacians on $L^2 (\Omega_i)$, $i=1,2$. Denote by $\{ (\lambda_k^D (\Omega_i),\psi_k (\Omega_i))\}_{k=1}^\infty$ a sequence of eigenvalues and eigenfunctions forming an orthonormal basis of $L^2(\Omega_i)$, $i=1,2$. Then the following are equivalent.
\begin{itemize}
\item[(1)] $\lambda_k^D (\Omega_1) = \lambda_k^D (\Omega_2)$ for all $k\geq 1$;
\item[(2)] There exists a unitary intertwining operator $U: L^2 (\Omega_1) \to L^2(\Omega_2)$;
\item[(3)] There exists an invertible intertwining operator $U: L^2 (\Omega_1) \to L^2(\Omega_2)$.
\end{itemize}
This equivalence remains true for the Neumann and for the Robin Laplacians on $L^2$, if $\Omega_1$ and $\Omega_2$ are bounded and Lipschitz. Moreover, the implication (3) $\implies$ (1) continues to hold for the Dirichlet Laplacians on $C_0$ if $\Omega_1$ and $\Omega_2$ are Dirichlet regular.
\end{proposition}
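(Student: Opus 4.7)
My plan is to run the cycle $(1) \Rightarrow (2) \Rightarrow (3) \Rightarrow (1)$ in the $L^2$-Dirichlet case, denoting by $\Delta_i := \Delta^D_{L^2(\Omega_i)}$, and then note that the argument transfers with only cosmetic changes to the other settings. For $(1) \Rightarrow (2)$, I would define $U\psi_k(\Omega_1) := \psi_k(\Omega_2)$ on the orthonormal basis and extend by linearity and continuity to a unitary map $L^2(\Omega_1) \to L^2(\Omega_2)$; the finite linear span of the $\psi_k(\Omega_1)$ is a core for $\Delta_1$ (standard for a self-adjoint operator with compact resolvent and a complete orthonormal eigenbasis), and on this core the identity $U \Delta_1 \psi_k(\Omega_1) = -\lambda_k^D(\Omega_1) \psi_k(\Omega_2) = \Delta_2 U\psi_k(\Omega_1)$ is immediate and extends by closedness of $\Delta_2$. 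The implication $(2) \Rightarrow (3)$ is trivial.

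The main content is $(3) \Rightarrow (1)$. The idea is to promote the intertwining of unbounded operators to a similarity of their (compact) resolvents. Fix $c$ in the common resolvent set of $\Delta_1$ and $\Delta_2$ (for example, $c = 1$) and set $R_i := (cI - \Delta_i)^{-1}$. For $x \in D(\Delta_1)$, the intertwining property gives $U(cI - \Delta_1)x = (cI - \Delta_2)Ux$; since $(cI - \Delta_1)$ is a bijection of $D(\Delta_1)$ onto $L^2(\Omega_1)$, every $y \in L^2(\Omega_1)$ has this form, and applying $R_2$ yields $R_2 U y = U R_1 y$. Hence $U R_1 = R_2 U$ globally, and invertibility of $U$ promotes this to the similarity $R_2 = U R_1 U^{-1}$. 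Since each $R_i$ is compact, this similarity forces $\sigma(R_1) = \sigma(R_2)$, and the restriction of $U$ to the $\mu$-eigenspace of $R_1$ is a bijection onto the $\mu$-eigenspace of $R_2$ for each $\mu$, so multiplicities also agree. Decoding $\mu = (c + \lambda_k^D(\Omega_i))^{-1}$ yields (1).

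The Neumann and Robin cases on $L^2$ under the bounded Lipschitz assumption go through verbatim, since both operators have compact resolvent and orthonormal eigenbases; for Robin one merely chooses $c$ sufficiently large to lie in the common resolvent set. For $(3) \Rightarrow (1)$ in the $C_0$-Dirichlet case with both domains Dirichlet regular, the same similarity argument runs on the $C_0$ spaces: by Remark~\ref{rem:dirichlet-regular} the resolvent sets of $\DLC$ on $\Omega_1$ and $\Omega_2$ are nonempty, so the resolvents $R_i$ on $C_0(\Omega_i)$ are well-defined bounded operators, and their eigenspaces coincide with the finite-dimensional $L^2$-eigenspaces (again by Remark~\ref{rem:dirichlet-regular}). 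Hence $R_2 = U R_1 U^{-1}$ forces the eigenvalue sequences on $C_0$, which by Remark~\ref{rem:dirichlet-regular} equal those on $L^2$, to coincide. The one point requiring care throughout is the passage from the intertwining relation on the operator domain to the global resolvent identity on the full Banach space; here invertibility of $U$, rather than mere injectivity or dense range, is what allows $U^{-1}$ to be pushed to the right-hand side, but this is an algebraic manipulation rather than a genuine obstacle.
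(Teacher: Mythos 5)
Your proof is correct, and the overall architecture (the cycle $(1)\Rightarrow(2)\Rightarrow(3)\Rightarrow(1)$, with $(1)\Rightarrow(2)$ built by sending eigenbasis to eigenbasis and extending via a core and closedness) matches the paper's. The one place where you genuinely diverge is $(3)\Rightarrow(1)$: the paper argues directly at the level of the unbounded operators, asserting that $U D(\Delta_1)=D(\Delta_2)$ and that $U^{-1}$ is also intertwining, and then reads off a bijective correspondence between eigenspaces from the eigenvalue equations themselves; you instead convert the intertwining relation into the global resolvent identity $UR_1=R_2U$ and deduce the similarity $R_2=UR_1U^{-1}$ of compact resolvents. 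The two routes establish the same eigenspace correspondence, but yours has a concrete advantage: the surjectivity statement $UD(\Delta_1)=D(\Delta_2)$, which the paper simply asserts, falls out automatically from $U^{-1}R_2=R_1U^{-1}$, since $D(\Delta_i)=\operatorname{ran}R_i$. The small price is that you must exhibit a common point of the two resolvent sets in each setting (trivial for Dirichlet and Neumann, $c$ large for Robin, and supplied by Dirichlet regularity together with the coincidence of the $C_0$- and $L^2$-spectra in the $C_0$ case), which you do address. Both arguments use compactness of the resolvent only to guarantee that the spectrum is a discrete sequence of finite-multiplicity eigenvalues so that the ordered lists in (1) can be compared.
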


\begin{proof}
(1) $\implies$ (2) Let $U$ be the unitary operator given by $U\psi_k (\Omega_1) = \psi_k (\Omega_2)$ for all $k\geq 1$. One sees, for example by the spectral theorem, that $U D(\Delta^D_{L^2(\Omega_1)}) = D(\Delta^D_{L^2(\Omega_2)})$ and $\Delta Uf = U\Delta f$ for all $f \in D(\Delta^D_{L^2(\Omega_1)})$.

(2) $\implies$ (3) Trivial.

(3) $\implies$ (1) Let $U: L^2 (\Omega_1) \to L^2 (\Omega_2)$  be invertible and intertwining; then $U D(\Delta^D_{L^2(\Omega_1)}) = D(\Delta^D_{L^2(\Omega_2)})$ and $\Delta Uf = U\Delta f$ for all $f \in D(\Delta^D_{L^2(\Omega_1)})$. Moreover, obviously also $U^{-1} D(\Delta^D_{L^2(\Omega_2)}) = D(\Delta^D_{L^2(\Omega_1)})$, and if $g \in D(\Delta^D_{L^2(\Omega_2)})$, say with $g=Uf$, then $U^{-1} \Delta g = U^{-1} \Delta Uf = U^{-1}U\Delta f = \Delta U^{-1}g$. Now let $\psi \in L^2 (\Omega_1)$ and $\lambda \in \R$. Then $\psi \in D(\Delta^D_{L^2(\Omega_1)})$ and $\Delta \psi = \lambda \psi$ if and only if $U\psi \in D(\Delta^D_{L^2(\Omega_2)})$ and $\Delta U\psi = \lambda U\psi$; note that $U\psi \neq 0$ since $U$ is invertible. The same is also true of $U^{-1}$. Thus $\lambda = \lambda_k (\Delta^D_{L^2(\Omega_1)})$ for some $k\geq 1$ if and only if $\lambda = \lambda_j (\Delta^D_{L^2 (\Omega_2)})$ for some $j\geq 1$. Since this holds for all $\lambda \in \R$, we see that $k=j$ in the case of simple eigenvalues, or correspondingly in the case of multiple eigenvalues the eigenspaces have the same dimension; and (1) holds. The argument is exactly for the Dirichlet Laplacians on $C_0$, and the Neumann and Robin Laplacians if $\Omega_1$ and $\Omega_2$ are bounded and Lipschitz.

Note that if a domain is Dirichlet regular, then its Dirichlet Laplacian spectra on $L^2$ and $C_0$ coincide (see \cite[Theorem~2.3]{arendt99}); in particular, if (3) is satisfied for the Dirichlet Laplacians on $C_0$, then in (1) it does not matter whether we consider the $L^2$- or the $C_0$-spectra.
\end{proof}

This is the setting of Kac' original question: does the existence of such a unitary intertwining operator imply that $\Omega_1$ and $\Omega_2$ are isospectral? Here, however, we will replace unitarity with the following property.

\begin{definition}
\label{def:disjointness-preserving}
Suppose $E_1$ and $E_2$ are Banach lattices. A bounded, linear operator $U: E_1 \to E_2$ is called \emph{disjointness-preserving} if
\begin{equation}
\label{eq:disjointness-preserving}
	|f| \wedge |g| = 0 \quad \implies \quad |Uf| \wedge |Ug| =0\qquad \text{for all } f,g \in E_1,
\end{equation}
where we use the notation $f \wedge g = \inf \{f,g\}$ in the sense of Banach lattices.
\end{definition}

In practice we will be interested (only) in the spaces $L^2(\Omega)$, $C_0(\Omega)$, $C_b(\Omega)$ and $C(\overline{\Omega})$, where $\Omega \subset \R^d$ is an open set, usually of finite Lebesgue measure; in the case $C(\overline{\Omega})$ we even restrict to bounded open sets. In these cases, \eqref{eq:disjointness-preserving} can be reformulated as
\begin{displaymath}
	f \cdot g = 0 \quad \implies \quad (Uf)\cdot (Ug)=0\qquad \text{for all } f,g,
\end{displaymath}
where the equalities should hold everywhere in $C$ or almost everywhere in $L^2$. For more on disjointness-preserving operators, we refer to \cite{abramovich92,pagter00,schep16} and the references therein.

\section{Disjointness-preserving operators}
\label{sec:disjointness-preserving}

In this section, we will present two key lemmata which show how the structural assumptions on $U$, namely that it be disjointness-preserving and that it intertwine Laplacians, force it to be at least \emph{locally} an isometry. Here we will work exclusively on spaces of continuous functions, as it is much easier to be able to work with point evaluations.

Our first lemma shows that any disjointness-preserving operator $U$ taking continuous functions on some open set $\omega_1$ to ones on another open set $\omega_2$ is, roughly speaking, locally of the form $Uf = hf\circ \tau$ for continuous maps $h: \omega_2 \to \C$ and $\tau : \omega_2 \to \omega_1$ (for more, general, properties of disjointness-preserving operators on Banach spaces, we refer to the volume \cite{abramovich92}; see also \cite{pagter00} and \cite{schep16}). The second lemma shows how the additional property of intertwining Laplacians then forces $h$ to be locally constant and $\tau$ to be locally an isometry.

\begin{lemma}
\label{lem:disjointness-form}
Suppose $\omega_1,\omega_2 \subset \R^d$ are open sets and $U\neq 0$ is a bounded linear mapping from $C_0 (\omega_1)$ into $C_b(\omega_2)$. Let
\begin{displaymath}
	\omega_2':=\{y \in \omega_2: \exists f \in C_0(\omega_1) \text{ such that } (Uf)(y) \neq 0\}.
\end{displaymath}
If $U$ satisfies the disjointness-preserving condition
\begin{equation}
\label{eq:disjointness-preserving-statement}
	f \cdot g = 0 \quad \implies \quad (Uf)\cdot (Ug)=0 \qquad \text{for all } f,g \in C_0 (\omega_1),
\end{equation}
then there exist functions $h: \omega_2' \to \C \setminus \{0\}$ and $\tau: \omega_2' \to \omega_1$ such that
\begin{equation}
\label{eq:disjointness-form}
	Uf(y) = h(y)f(\tau(y)) \qquad \text{for all } y \in \omega_2' \text{ and all } f \in C_0 (\omega_1).
\end{equation}
Moreover, $\omega_2' \neq \emptyset$ is open and $h$ and $\tau$ are continuous. If in addition $U(C_c^\infty (\omega_1)) \subset C^k (\omega_2)$ for some $0 \leq k \leq \infty$, then $h,\tau \in C^k (\omega_2')$.
\end{lemma}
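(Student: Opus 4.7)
The plan is to extract the data $h,\tau$ by analysing, for each fixed $y\in\omega_2$, the bounded linear functional $\Lambda_y:C_0(\omega_1)\to\C$ given by $\Lambda_y f = (Uf)(y)$. This functional inherits a scalar form of the disjointness-preserving property: if $f\cdot g=0$ on $\omega_1$, then $\Lambda_y f \cdot \Lambda_y g = 0$, so at least one of the two numbers vanishes. The openness (and nonemptiness) of $\omega_2'$ is immediate from the continuity of $Uf$ on $\omega_2$ and the assumption $U\neq 0$. Everything else follows once we pin down the ``support'' of $\Lambda_y$.

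To that end, for $y\in\omega_2'$ define $K_y\subset\omega_1$ to consist of those $x\in\omega_1$ such that every open neighbourhood $V$ of $x$ contains some $f\in C_c^\infty(V)$ with $\Lambda_y f\neq 0$. A standard partition-of-unity argument shows that if $K_y=\emptyset$, then $\Lambda_y$ kills $C_c^\infty(\omega_1)$ and hence, by density in $C_0(\omega_1)$ and continuity of $\Lambda_y$, vanishes identically, contradicting $y\in\omega_2'$. On the other hand, if $K_y$ contained two distinct points $x_1\neq x_2$, then choosing disjoint neighbourhoods and bump functions $f_1,f_2$ supported in each with $\Lambda_y f_i\neq 0$ would contradict the disjointness-preserving condition \eqref{eq:disjointness-preserving-statement}. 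Hence $K_y$ is a single point, which we call $\tau(y)\in\omega_1$.

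The central claim is then that $f(\tau(y))=0$ implies $\Lambda_y f = 0$. Given such $f\in C_0(\omega_1)$, pick a sequence $\varphi_n\in C_c^\infty(\omega_1)$ of cut-offs equal to $1$ on $B(\tau(y),1/n)$ and supported in $B(\tau(y),2/n)\subset\omega_1$, and set $f_n=(1-\varphi_n)f$. Since $\tau(y)\notin\supp f_n$, the single-point structure of $K_y$ forces $\Lambda_y f_n=0$; and $f_n\to f$ in $\|\cdot\|_\infty$ by continuity of $f$ at $\tau(y)$, so $\Lambda_y f = 0$ by boundedness of $\Lambda_y$. Fixing any $f_0\in C_c^\infty(\omega_1)$ with $f_0(\tau(y))=1$ and setting $h(y):=\Lambda_y f_0$, we can decompose an arbitrary $f$ as $f(\tau(y))f_0 + (f-f(\tau(y))f_0)$ to conclude $\Lambda_y f = h(y)f(\tau(y))$, which is \eqref{eq:disjointness-form}; the choice of $f_0$ is irrelevant (two such $f_0$ differ by a function vanishing at $\tau(y)$), and $h(y)\neq 0$ since otherwise $\Lambda_y\equiv 0$.

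It remains to establish regularity. For continuity of $\tau$ at $y_0\in\omega_2'$, given any open $V'\ni\tau(y_0)$ with $V'\subset\omega_1$, pick $f_0\in C_c^\infty(V')$ with $f_0(\tau(y_0))=1$; by continuity of $Uf_0$, the function $Uf_0=h\cdot(f_0\circ\tau)$ is nonzero on some neighbourhood $W$ of $y_0$, which forces $\tau(W)\subset\{f_0\neq 0\}\subset V'$. Continuity of $h$ then follows by choosing $f_0\equiv 1$ on a neighbourhood of $\tau(y_0)$, so that $h=Uf_0$ locally near $y_0$. The same trick handles the $C^k$ statement: with such $f_0$, $h$ agrees locally with $Uf_0\in C^k(\omega_2)$, and for $\tau$ one uses the coordinate functions $x\mapsto x_j$ cut off inside $V'$, recovering $\tau_j(y')=(Uf_j)(y')/h(y')$ on a neighbourhood of $y_0$ where $\tau$ lands in the region where $f_j$ equals the $j$th coordinate. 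The main obstacle, and the crux of the lemma, is the single-point reduction of $K_y$ together with the cut-off approximation argument showing $f(\tau(y))=0\Rightarrow(Uf)(y)=0$; once that is in place, the construction of $h,\tau$ and their regularity are essentially bookkeeping.
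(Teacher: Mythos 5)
Your proof is correct and follows essentially the same route as the paper: you study the point-evaluation functionals $\Lambda_y f = (Uf)(y)$, show their support is a singleton $\{\tau(y)\}$ via the disjointness condition, deduce the representation $\Lambda_y = h(y)\,\delta_{\tau(y)}$, and obtain continuity and $C^k$-regularity of $h$ and $\tau$ by testing against cut-offs equal to $1$ or to the coordinate functions near $\tau(y_0)$. The only difference is that you spell out in detail (via the set $K_y$ and the cut-off approximation $f_n=(1-\varphi_n)f$) the step the paper dispatches with ``it follows that $\varphi_y = h(y)\delta_{\tau(y)}$'', which is a welcome elaboration rather than a deviation.
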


\begin{remark}
\label{rem:disjointness-form-c}
While the choice of the space $C_0 (\omega_1)$ recalls the Dirichlet boundary condition, the same conclusion is obviously true if $U$ maps the whole of $C(\overline{\omega}_1)$ or $C_b(\omega_1)$ into $C_b(\omega_2)$. The space $C_0 (\omega_1)$ is simply the smallest of these spaces, and thus gives the weakest condition. If, for example, $U: C(\overline{\omega}_1) \to C(\overline{\omega}_2)$ is bounded and linear, and the disjointness-preserving condition \eqref{eq:disjointness-preserving-statement} holds on $C(\overline{\omega}_1)$, then so too does \eqref{eq:disjointness-form}, and with the same proof.
\end{remark}

\begin{proof}[Proof of Lemma~\ref{lem:disjointness-form}]
The idea of the proof is already contained in \cite[Proposition~2.4]{arendt02}, albeit under somewhat different assumptions. The set $\omega_2'$ is open since each $Uf$ is continuous, and non-empty since $U \neq 0$. Now suppose $y \in \omega_2'$. Then $\varphi_y :=Uf (y)$ defines a non-zero functional on $C_0 (\omega_1)$. We claim that the support of $\varphi_y$ is a singleton. Indeed, if $x_1,x_2 \in \supp \varphi_y$, $x_1 \neq x_2$, then by definition of the support of a functional there exist functions $f,g \in C_0 (\omega_1)$ with disjoint support (i.e., $f\cdot g= 0$ everywhere) such that $f(x_1) \neq 0$, $g(x_2)\neq 0$, $\varphi_y f \neq 0$ and $\varphi_y g \neq 0$. But by assumption
\begin{displaymath}
	0 = (Uf)(y)\cdot (Ug)(y) = \varphi_yf \cdot \varphi_y g,
\end{displaymath}
a contradiction. It follows that there exist $0 \neq h(y) \in \C$ and $\tau(y) \in \omega_1$ such that
\begin{displaymath}
	\varphi_y = h(y) \delta_{\tau(y)},
\end{displaymath}
where $\delta_{\tau(y)}$ is the delta distribution at the point $\tau(y)$. This means that
\begin{displaymath}
	Uf(y) = h(y)f(\tau(y)).
\end{displaymath}
(Note in particular that $\tau (y) \in \partial\omega_1$ is impossible since then $f(\tau(y))=0$, meaning $Uf(y)=0$ for all $f \in C_0(\omega_1)$.) Since $y \in \omega_2'$ was arbitrary, this also means that $h(\omega_2') \subset \C\setminus\{0\}$, and \eqref{eq:disjointness-form} holds for all $y \in \omega_2'$.

Finally, we prove the regularity of $\tau$ and $h$. Here the proof is essentially the one given in \cite[Proposition~2.4]{arendt02}. If $\tau$ is not continuous on $\omega_2'$, then we can find $y,y_n \in \omega_2'$ and $\varepsilon>0$ such that $y_n\to y$ but $|\tau (y_n) - \tau(y)| \geq \varepsilon$ for all $n$. If we choose $f \in C_0 (\omega_1)$ such that $f(\tau(y))=1$ and $\supp f \subset B_\varepsilon (\tau(y))$, then $f(\tau(y_n))=0$ for all $n$, meaning $Uf(y_n)=0$ for all $n$. But $Uf(y)=h(y) \neq 0$. This contradicts the continuity of $Uf$. Hence $\tau$ is continuous on $\omega_2'$.

Next, fix an arbitrary open set $\omega$ which is compactly contained in $\omega_2'$. Then $\tau (\overline{\omega}) \subset \omega_1$ is compact since $\tau$ is continuous. Choose $f \in C_c^\infty (\omega_1)$ such that $f|_{\tau(\overline\omega)} = 1$. Then $Uf=h$ on $\overline\omega$. In particular, $h \in C(\overline{\omega})$. If $Uf \in C^k$, $k \leq \infty$, then the same argument shows that $h \in C^k$.

Finally, writing $x = (x_1,\ldots, x_d) \in \R^d$ and similarly $\tau = (\tau_1,\ldots,\tau_d)$, if we choose $f \in C_c^\infty (\omega_1)$ such that $f(x)=x_j$ on $\tau (\overline{\omega})$ for some $j=1,\ldots,d$, then
\begin{displaymath}
	Uf(y) = h(y) \tau_j (y).
\end{displaymath}
Since $Uf,h\in C^k$, we therefore also have $\tau \in C^k (\overline{\omega})$.
\end{proof}

\begin{lemma}
\label{lem:intertwining-form}
Suppose $\omega_1,\omega_2 \subset \R^d$ are open sets and $h: \omega_2 \to \C \setminus \{0\}$ and $\tau: \omega_2 \to \omega_1$ are continuous. Define the linear mapping $U: C_c^\infty (\omega_1) \to C (\omega_2)$ by $Uf(y) = h(y) f(\tau(y))$ for all $y \in \omega_2$, and assume additionally that
\begin{equation}
\label{eq:intertwining-property}
	\Delta (Uf) = U (\Delta f) \qquad \text{for all } f \in C_c^\infty (\omega_1),
\end{equation}
where $\Delta (Uf)$ is understood as a distribution. Then, on each connected component $N$ of $\omega_2$, $h|_N$ is constant and $\tau|_N : N \to \tau (N) \subset \omega_1$ is an isometry.
\end{lemma}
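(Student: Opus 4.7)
The plan is to first boost the regularity of $h$ and $\tau$ to $C^\infty$ by applying elliptic regularity (Weyl's lemma) to the distributional intertwining identity, and then to extract from the same identity a pointwise orthogonality relation for the Jacobian of $\tau$. From this, the desired structure on each connected component of $\omega_2$ will follow by a classical rigidity computation, and the statement about $h$ will drop out as a by-product.

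First I would fix $y_0 \in \omega_2$, set $x_0 := \tau(y_0)$, and pick a small neighbourhood $V$ of $y_0$ so that $\tau(\overline V)$ lies in a ball contained in $\omega_1$. Choosing $f \in C_c^\infty(\omega_1)$ equal to $1$ on a neighbourhood of $\tau(\overline V)$, one has $Uf = h$ and $U(\Delta f) = 0$ on $V$, so the intertwining identity forces $\Delta h = 0$ on $V$ in the sense of distributions; by Weyl's lemma, $h$ is harmonic on $V$, and in particular $C^\infty(V)$. Running the same argument with $f(x)$ equal to the coordinate function $x_j$ near $\tau(\overline V)$ shows that $h\tau_j$ is harmonic on $V$, and since $h$ is now smooth and nowhere zero, $\tau_j = (h\tau_j)/h$ is also smooth on $V$. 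Varying $y_0$ gives $h, \tau \in C^\infty(\omega_2)$, with $h$ harmonic throughout $\omega_2$.

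Next I would plug in quadratic test data. Fixing $y_0 \in \omega_2$ and choosing $f \in C_c^\infty(\omega_1)$ that agrees with $(x_j - x_{0,j})(x_k - x_{0,k})$ near $x_0$, so that $\Delta f \equiv 2\delta_{jk}$ there, I would expand $\Delta(h \cdot f\circ\tau)$ by the product rule. Evaluated at $y_0$, the $\Delta h$ term vanishes by harmonicity, and the cross terms $2\nabla h\cdot\nabla(f\circ\tau)$ and the term $h\cdot\Delta(f\circ\tau)$ lose every contribution that carries a factor of $(\tau_j - x_{0,j})$ or $(\tau_k - x_{0,k})$ (which are zero at $y_0$); what remains is $2 h(y_0)\,\nabla\tau_j(y_0)\cdot\nabla\tau_k(y_0)$, which must equal $U(\Delta f)(y_0) = 2 h(y_0)\,\delta_{jk}$. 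Since $h(y_0) \neq 0$ and $y_0$ is arbitrary, this yields the pointwise orthogonality relation
\begin{equation*}
\nabla\tau_j(y) \cdot \nabla\tau_k(y) = \delta_{jk}, \qquad y \in \omega_2,\; j,k = 1,\ldots,d,
\end{equation*}
so $D\tau(y) \in O(d)$ at every point. A parallel computation using a linear test datum $f(x) = x_j - x_{0,j}$ near $x_0$ gives the companion identity $2\nabla h \cdot \nabla\tau_j + h\,\Delta\tau_j = 0$ on $\omega_2$ for each $j$.

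Finally I would invoke the classical rigidity argument: differentiating $\partial_a\tau\cdot\partial_b\tau = \delta_{ab}$ with respect to $y_c$ and forming the signed cyclic combination $[abc]+[bca]-[cab]$ of the three resulting identities, together with equality of mixed partial derivatives, collapses to $\partial_c\partial_a\tau\cdot\partial_b\tau = 0$ for all $a,b,c$. Since $\{\partial_b\tau(y)\}_{b=1}^d$ is an orthonormal basis of $\R^d$ at each $y$, this forces every second derivative of $\tau$ to vanish, so $\tau$ is affine on each connected component $N$ of $\omega_2$; combined with $D\tau \in O(d)$, the restriction $\tau|_N$ is a rigid motion and hence an isometry onto $\tau(N) \subset \omega_1$. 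In particular $\Delta\tau_j \equiv 0$, so the companion identity reduces to $\nabla h \cdot \nabla\tau_j = 0$ for each $j$; since $\{\nabla\tau_j(y)\}_j$ is a basis of $\R^d$, $\nabla h \equiv 0$ on $N$, and $h|_N$ is constant. The step requiring the most care is the initial regularity bootstrap: the intertwining identity is only assumed distributionally, and without first knowing that $h$ and $\tau$ are smooth the product-rule expansion in the third paragraph would be meaningless. Once Weyl's lemma has done its work, the rest reduces to a direct calculation together with a standard rigidity argument.
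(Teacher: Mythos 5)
Your proof is correct and follows essentially the same route as the paper: the identical Weyl's-lemma bootstrap on $h$ and $h\tau_j$ using $f\equiv 1$ and $f=x_j$ locally, followed by quadratic and linear test data to extract $\nabla\tau_j\cdot\nabla\tau_k=\delta_{jk}$ and $2\nabla h\cdot\nabla\tau_j+h\Delta\tau_j=0$. The only differences are cosmetic: you use test functions centred at $x_0$ to kill the lower-order terms pointwise rather than first deriving the general identity, and you write out the classical cyclic-permutation rigidity computation where the paper cites \cite[Proposition~2.3]{arendt02}.
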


Note in particular that the lemma also shows that $Uf \in C^\infty (\omega_2)$ and \eqref{eq:intertwining-property} in fact holds pointwise. Also observe that we do not actually need $U$ to map into $C(\omega_2)$: any $L^p (\omega_2)$-space could also be used, with the same proof.

The idea of Lemma~\ref{lem:intertwining-form} appeared implicitly in \cite[Steps (c)-(e) of the proof of Proposition~2.4]{arendt02} under stronger regularity assumptions, and here the formal argument is essentially the same; for the sake of completeness, we will reproduce the calculations in slightly abridged form. However, we additionally need to account for the fact that, unlike in \cite{arendt02}, $\Delta (Uf)$ is initially only defined in the sense of distributions.

\begin{proof}[Proof of Lemma~\ref{lem:intertwining-form}]
Let $y_0 \in \omega_2$. Choose an open neighbourhood $B_1 \subset \omega_1$ of $\tau (y_0)$ and an open neighbourhood $B_2 \subset \omega_2$ of $y_0$ such that $\tau (B_2) \subset B_1$. Now let $f \in C_c^\infty (\omega_1)$ such that $f \equiv 1$ on $B_1$. Then
\begin{displaymath}
	\Delta (h\cdot f \circ \tau) = h \cdot (\Delta f) \circ \tau = 0
\end{displaymath}
on $B_2$. In particular, $\Delta h = 0$ on $B_2$ in the sense of distributions. But this already implies that $h$ is harmonic and in particular an element of $C^\infty (B_2)$.

Next, fix $j \in \{1,2,\ldots, d\}$ and choose $f\in C_c^\infty(\omega_1)$ such that $f(x):=x_j$ on $B_1$ (where we recall that we are writing $x=(x_1,\ldots,x_d) \in \R^d$). Similarly, write $\tau = (\tau_1,\ldots, \tau_d)$. Then
\begin{displaymath}
	\Delta (h \cdot \tau_j) = \Delta (h \cdot f \circ \tau) = h \cdot (\Delta f) \circ \tau = 0
\end{displaymath}
on $B_2$. Thus $h \cdot \tau_j$ is also harmonic and so in $C^\infty (B_2)$. In particular, since $h\neq 0$, also $\tau_j \in C^\infty (B_2)$.

Since this holds for an arbitrary open set $B_1 \subset \omega_1$ and since $\tau (\omega_2) \subset \omega_1$, so that any open set $B_2$ compactly contained in $\omega_2$ can be treated in this fashion, we conclude $h,\tau_j \in C^\infty (\omega_2)$ for all $j=1,\ldots,d$.

Now we may proceed formally as in the proof of \cite[Proposition~2.4]{arendt02}. So fix $f \in C_c^\infty (\omega_1)$. Unpackaging the identity
\begin{displaymath}
	\Delta (h\cdot f \circ \tau) = h \cdot (\Delta f) \circ \tau,
\end{displaymath}
which we now know to hold pointwise (indeed, both sides are $C^\infty$), and using $\Delta h = 0$, we arrive at
\begin{equation}
\label{eq:spass-mit-nabla}
	2\nabla h \cdot \nabla (f \circ \tau) + h\Delta (f\circ \tau) = h(\Delta f)\circ \tau \qquad \text{on } \omega_2,
\end{equation}
for all $f \in C_c^\infty (\omega_1)$. For such functions, since $f \circ \tau \in C^\infty (\omega_2)$, an elementary calculation using the chain rule gives
\begin{displaymath}
	\Delta (f\circ\tau) = \left[\sum_{j,k=1}^d \left( \frac{\partial^2}{\partial x_j\partial x_k} f\right) \circ \tau\right] \nabla \tau_j \cdot \nabla \tau_k
	+ \left[\sum_{k=1}^d \left( \frac{\partial}{\partial x_k} f\right) \circ \tau \right] \Delta \tau_k.
\end{displaymath}
Inserting this into \eqref{eq:spass-mit-nabla} and simplifying,
\begin{displaymath}
	\left[\sum_{j,k=1}^d \left( \frac{\partial^2}{\partial x_j\partial x_k} f\right)\circ \tau\right] \nabla \tau_j \cdot \nabla \tau_k = (\Delta f) \circ \tau
\end{displaymath}
pointwise on $\omega_2$, for all $f \in C_c^\infty (\omega_1)$. Fixing $\omega$ open, arbitrary, compactly contained in $\omega_1$, and choosing $f \in C_c^\infty (\omega_1)$ such that $f(x) = \frac{1}{2}x_j^2$ on $\omega$, we obtain
\begin{displaymath}
	\nabla \tau_j \cdot \nabla \tau_j = 1 \qquad \text{on } \omega_2,\, j=1,\ldots,d,
\end{displaymath}
while the choice of $f(x)=x_jx_k$ on $\omega$ for $j\neq k$ leads to
\begin{displaymath}
	\nabla \tau_j \cdot \nabla \tau_k = 0 \qquad \text{on } \omega_2,\, j\neq k.
\end{displaymath}
These properties together imply that $\tau$ is an isometry on each connected component of $\omega_2$ (a proof of this assertion is given in \cite[Proposition~2.3]{arendt02}). Now choosing $f(x)=x_j$ on $\omega$, from \eqref{eq:spass-mit-nabla} also follows
\begin{displaymath}
	2\nabla h \cdot \nabla \tau_j  + h\Delta \tau_j= 0 \qquad \text{on } \omega_2,
\end{displaymath}
$j=1,\ldots,d$. Since $\tau$ is locally an isometry, $\Delta \tau_j = 0$ for all $j$, so $\nabla h \cdot \nabla \tau_j = 0$ for all $j$. Since the matrix of derivatives $D\tau$ of $\tau$ is an orthogonal matrix, we conclude that $\nabla h=0$, that is, $h$ is constant on each connected component of $\omega_2$.
\end{proof}

At this juncture, we observe that the existence of a disjointness-preserving intertwining operator does not yet force the domains to be congruent.

\begin{example}
\label{ex:not-yet}
(a) Suppose $\Omega_1 = (0,\pi) \subset \R$ and $\Omega_2 = (0,2\pi)$. Define a bounded, linear operator $U: L^2(\Omega_1 ) \to L^2(\Omega_2)$ by
\begin{displaymath}
	Uf(x):= \begin{cases} f(x) \qquad &\text{if } x \in (0,\pi],\\ -f(2\pi-x) \qquad & \text{if } x \in (\pi,2\pi).\end{cases}
\end{displaymath}
Thus $U$ extends functions $f$ on $\Omega_1$ to $\Omega_2$ by odd reflection in $x=\pi$; for example, if $f(x)=\sin x$ on $(0,\pi)$, then $Uf(x)=\sin x$ on $(0,2\pi)$. We see immediately that $U$ is disjointness-preserving. Moreover, if we set $\mathcal{D}_1:= H^2(\Omega_1) \cap H^1_0 (\Omega_1)$, the domain of definition of the Dirichlet Laplacian on $\Omega_1$, then we claim that $U(\mathcal{D}_1) \subset \mathcal{D}_2:= H^2(\Omega_2) \cap H^1_0 (\Omega_2)$. In fact, this is a standard argument using that the domains are one-dimensional: by Sobolev embedding theorems, we have $\mathcal{D}_1 \subset C^1(\overline{\Omega}_1) \cap C_0 (\Omega_1)$, meaning that if $f \in \mathcal{D}_1$, then $Uf$ is, in particular, in $C^1(\overline{\Omega}_2) \cap C_0 (\Omega_2)$. Since it is also piecewise-$H^2$, it is also globally in $H^2 (\Omega_2)$ and takes on the value $0$ at $0$ and $2$. It now follows easily that $U$ satisfies the intertwining property \eqref{eq:intertwining} for the Dirichlet Laplacian; but $\Omega_1$ and $\Omega_2$ are obviously not congruent.

(b) If in (a) we instead define $U$ by
\begin{displaymath}
	Uf(x):= \begin{cases} f(x) \qquad &\text{if } x \in (0,\pi],\\ f(2\pi-x) \qquad & \text{if } x \in (\pi,2\pi),\end{cases}
\end{displaymath}
that is, by even reflection, then we may show that $U$ is a disjointness-preserving operator from $L^2(\Omega_1)$ to $L^2(\Omega_2)$ which now intertwines the respective Neumann Laplacians; in fact it is also positivity preserving: $|Uf|=U|f|$ for all $f \in L^2(\Omega_1)$. The same example works on spaces of continuous functions, i.e., if $U: C(\overline{\Omega}_1) \to C(\overline{\Omega}_2)$, in which case $U$ is even norm-preserving.

(c) Let $\Omega_1 \subset \R^d$ be an arbitrary open set, let $n \in \N \cup \{\infty\}$, and suppose $\omega_1, \ldots, \omega_n \subset \R^d$ are pairwise disjoint copies of $\Omega_1$, i.e., for each $i=1,\ldots,n$ there exists an isometry $\tau_i : \R^d \to \R^d$ such that $\tau (\omega_i) = \Omega_1$. Take $\Omega_2$ to be any open set containing all the $\omega_i$ and define $U:C_0 (\Omega_1) \to C_0 (\Omega_2)$ by
\begin{displaymath}
	Uf (x) := \begin{cases} f \circ \tau_i (x) \qquad &\text{if } x \in \omega_i,\\ 0 \qquad &\text{otherwise}.\end{cases}
\end{displaymath}
Then $U$ is obviously disjointness-preserving, and one may check that $U$ intertwines the Dirichlet Laplacians on $C_0 (\Omega_1)$ and $C_0 (\Omega_2)$.
\end{example}

In all these examples, $U$ is not an isometry, but it acts as a (disjoint) composition of isometries, as the above lemmata already suggest: say, the set $U(\Omega_1):= \{x \in \Omega_1: \exists f \in C_0 (\Omega_1) \text{ with } Uf(x) \neq 0 \}$ is isometric to a finite number of disjoint copies of $\Omega_2$. In the next section, we shall see that \emph{any} disjointness-preserving operator intertwining Dirichlet Laplacians on $C_0$ has this property. In particular, if we make further assumptions on $U$---for example, that $U$ is unitary, but in practice we need much less---then $U$ is in fact an isometry. A formalisation of this observation in different settings, namely the Dirichlet and Neumann Laplacians on spaces of continuous functions or $L^2$, will be the subject of the coming sections.

\section{Disjointness-preserving operators intertwining Dirichlet Laplacians on $C_0$}
\label{sec:dirichlet}

We start with the space $C_0$ (see Definition~\ref{def:c0}). Our first theorem shows that Example~\ref{ex:not-yet}(c) essentially characterises all disjointness-preserving operators intertwining the Dirichlet Laplacians $\DLC$ on $C_0$, up to constants.

\begin{theorem}
\label{thm:disj-pres-dirichlet-char-c0}
Suppose that $\Omega_1,\Omega_2 \subset \R^d$ are open sets, that $\Omega_1$ is connected, and that $0 \neq U: C_0 (\Omega_1) \to C_0 (\Omega_2)$ is a bounded, linear operator such that
\begin{enumerate}
\item[(a)] $f\cdot g= 0$ implies $(Uf)\cdot (Ug) = 0$ for all $f,g \in C_0 (\Omega_1)$; and
\item[(b)] $U(\Delta f) = \Delta (Uf)$ in the sense of distributions, for all $f \in C_c^\infty (\Omega_1)$.
\end{enumerate}
Then there exist pairwise disjoint, connected open sets $\omega_i \subset \Omega_2$, $i \in I\subset \N$, together with isometries $\tau_i: \R^d \to \R^d$ such that $\tau_i (\omega_i)=\Omega_1$ and constants $c_i \in \C$, not all zero, $i \in I $, such that for all $f \in C_0 (\Omega_1)$,
\begin{displaymath}
	Uf(x)= \begin{cases} c_i f \circ \tau_i (x) \qquad &\text{if } x \in \omega_i,\\ 0 \qquad &\text{otherwise}.\end{cases}
\end{displaymath}
\end{theorem}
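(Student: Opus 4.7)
Plan. The statement combines the two lemmata of Section~3 with a connectedness/boundary-behaviour argument.

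First I would apply Lemma~\ref{lem:disjointness-form} directly to $U : C_0(\Omega_1) \to C_0(\Omega_2) \subset C_b(\Omega_2)$: since $U \neq 0$, the set
\begin{displaymath}
	\omega_2' := \{y \in \Omega_2 : \exists f \in C_0(\Omega_1), Uf(y) \neq 0\}
\end{displaymath}
is non-empty and open, and there exist continuous maps $h : \omega_2' \to \C \setminus \{0\}$ and $\tau : \omega_2' \to \Omega_1$ with $Uf(y) = h(y) f(\tau(y))$ for all $y \in \omega_2'$. Hypothesis (b), read as a distributional identity on the open set $\omega_2'$, now lets me invoke Lemma~\ref{lem:intertwining-form} on $\omega_2'$: on every connected component $\omega_i$ of $\omega_2'$ (and there are only countably many, by second countability of $\R^d$) the value $h|_{\omega_i} = c_i \in \C \setminus \{0\}$ is constant and $\tau_i := \tau|_{\omega_i}$ is an isometry onto $V_i := \tau_i(\omega_i) \subset \Omega_1$. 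By rigidity of Euclidean isometries, each $\tau_i$ extends uniquely to an isometry $\tau_i : \R^d \to \R^d$.

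The main step is then to show $V_i = \Omega_1$ for every $i$. Because $\Omega_1$ is connected and $V_i$ is a non-empty open subset of $\Omega_1$, it suffices to prove $V_i$ is closed in $\Omega_1$. Let $x_n \in V_i$ with $x_n \to x_0 \in \Omega_1$, and set $y_n := \tau_i^{-1}(x_n) \in \omega_i$; since $\tau_i^{-1}$ is a global isometry, $y_n \to y_0 := \tau_i^{-1}(x_0) \in \overline{\omega_i} \subset \overline{\Omega_2}$. I need to rule out the three possibilities $y_0 \in \partial \Omega_2$, $y_0 \in \Omega_2 \setminus \omega_2'$, and $y_0 \in \omega_j$ for some $j \neq i$. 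Pick $f \in C_c^\infty(\Omega_1)$ with $f(x_0) = 1$: then $Uf(y_n) = c_i f(x_n) \to c_i \neq 0$. This is the key obstruction. If $y_0 \in \partial \Omega_2$, then $Uf \in C_0(\Omega_2)$ forces $Uf(y_n) \to 0$, a contradiction; if $y_0 \in \Omega_2 \setminus \omega_2'$, then the definition of $\omega_2'$ gives $Uf(y_0) = 0$, contradicting the continuity of $Uf$ on $\Omega_2$; finally $y_0 \in \omega_j$ with $j \neq i$ is impossible because $\omega_j$ is open, so $y_n \in \omega_j$ eventually, contradicting $y_n \in \omega_i$ and the disjointness of the components. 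Hence $y_0 \in \omega_i$ and $x_0 = \tau_i(y_0) \in V_i$, so $V_i$ is closed in $\Omega_1$ and equals $\Omega_1$.

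To conclude, $\omega_2' = \bigsqcup_{i \in I} \omega_i$ with $I \subset \N$ non-empty (since $U \neq 0$), and on each $\omega_i$ the formula $Uf(y) = c_i f(\tau_i(y))$ holds with $c_i \neq 0$ and $\tau_i(\omega_i) = \Omega_1$, while $Uf \equiv 0$ on $\Omega_2 \setminus \omega_2'$ by definition of $\omega_2'$. I expect the main obstacle to be the boundary-case analysis (the three cases above), because it is the only place the specific choice of the ambient space $C_0$ enters: the $C_0$-condition at $\partial \Omega_2$ together with continuity of $Uf$ on $\Omega_2$ is exactly what prevents the image $V_i$ from ``stopping inside'' $\Omega_1$, and hence forces the global-isometry structure claimed by the theorem.
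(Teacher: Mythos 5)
Your proposal is correct and follows essentially the same route as the paper: apply Lemma~\ref{lem:disjointness-form} and then Lemma~\ref{lem:intertwining-form} on $\Omega_2'$, and then show each component maps \emph{onto} $\Omega_1$ by the same boundary dichotomy (a limit point of $\omega_i$ lies either on $\partial\Omega_2$, where the $C_0$-condition kills $Uf$, or in $\Omega_2\setminus\Omega_2'$, where $Uf$ vanishes by definition), which contradicts $Uf(y_n)\to c_i\neq 0$. The only cosmetic difference is that you prove $\tau_i(\omega_i)$ is closed in $\Omega_1$ by a sequential argument, whereas the paper shows $\tau(\partial\omega)\subset\partial\Omega_1$ and concludes by clopenness; these are the same argument.
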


\begin{remark}
(a) We observe explicitly that $C_c^\infty (\Omega_1)$ is \emph{not} a core for the Dirichlet Laplacian on $C_0 (\Omega_1)$. Thus the assumptions do \emph{not} require that $U$ intertwine the Laplacians on a core; this is another sense in which this is a generalisation of previous results, cf.~\cite[Theorem~2.2]{arendt02} or \cite[Section~3]{arendt01}.

(b) It is clear that the set $I$ is at most countable, and in fact finite whenever $\Omega_2$ is bounded.

(c) The converse of Theorem~\ref{thm:disj-pres-dirichlet-char-c0} is also true. Let $\Omega_1,\Omega_2 \subset \R^d$ be open sets and assume that $\Omega_1$ is connected, and that $\omega_i$ are open sets in $\Omega_2$, $i \in I$, which are pairwise disjoint and isometric to $\Omega_1$. Let $\tau_i$ be isometries such that $\tau_i (\omega_i) = \Omega_1$ for $i \in I$ and let $c_i \in \C$, $i \in I$. Then
\begin{displaymath}
	(Uf)(x):= \begin{cases} c_i f \circ \tau_i (x) \qquad &\text{if } x \in \omega_i,\\ 0 \qquad &\text{otherwise}\end{cases}
\end{displaymath}
defines a disjointness-preserving operator $U: C_0(\Omega_1) \to C_0(\Omega_2)$ which also satisfies the intertwining property (b).
\end{remark}

\begin{proof}[Proof of Theorem~\ref{thm:disj-pres-dirichlet-char-c0}]
Applying Lemma~\ref{lem:disjointness-form} on $\omega_1 = \Omega_1$ and $\omega_2 = \Omega_2$ and then Lemma~\ref{lem:intertwining-form} on $\Omega_1$ and the non-empty open set
\begin{displaymath}
	\Omega_2' := \{ y \in \Omega_2 : \exists f \in C_0(\Omega_1) \text{ with } Uf(y) \neq 0 \} \subset \Omega_2,
\end{displaymath}
we obtain that on each connected component $\omega$ of $\Omega_2'$ there exist a constant $c = c(\omega) \in \C \setminus \{0\}$ and an isometry $\tau:\omega \to \tau(\omega) \subset \Omega_1$ (which extends canonically to an isometry $\tau : \R^d \to \R^d$), such that $(Uf)|_\omega = c f\circ \tau |_\omega$. Then, by continuity, $(Uf)|_{\overline{\omega}} = cf\circ \tau|_{\overline{\omega}}$ for the same constant $c$ and the same isometry $\tau$, for all $f \in \overline{\omega}$ (or equivalently all $f \in C(\overline{\Omega}_1)$). We need to show that in fact $\tau (\omega) = \Omega_1$.

We claim that $\tau (\partial\omega) \subset \partial\Omega_1$. Indeed, suppose $y_0 \in \partial\omega$. Then $\tau (y_0) \in \overline{\Omega}_1$, since $\tau$ is continuous on $\overline{\omega}$ and $\tau (\omega) \subset \Omega_1$. Now suppose for a contradiction that $\tau (y_0) \in \Omega_1$. Choose $f \in C_c^\infty (\Omega_1)$ such that $f(\tau (y_0))=1$. As noted above, continuity of $U$ implies that $Uf(y_0)=c$. But in fact $Uf(y_0)=0$, since either
\begin{enumerate}
\item[(i)] $y_0 \in \partial\omega \cap \partial\Omega_2$, in which case $Uf(y_0)=0$ as $Uf \in C_0(\Omega_2)$, or
\item[(ii)] $y_0 \in \partial\omega \cap \Omega_2 = \partial\Omega_2' \cap \Omega_2$, in which case $y_0 \not\in \Omega_2'$ since the latter is open. Thus $Uf(y_0)=0$ by definition of $\Omega_2'$.
\end{enumerate}
This contradiction proves the claim. To summarise, we have $\tau (\omega) \subset \Omega_1$, and $\partial \tau (\omega) = \tau (\partial\omega) \subset \partial\Omega_1$ (where the equality follows since $\tau$ is an isometry). Since $\Omega_1$ is connected, $\tau (\omega) = \Omega_1$, as required.
\end{proof}

Our next theorem will give additional conditions under which $\Omega_1$ and $\Omega_2$ are isometric. We first need a couple of technical results.

\begin{definition}
\begin{enumerate}
\item[(a)] An open set $\Omega \subset \R^d$ is said to be \emph{regular in topology} if $\interior{\overline{\Omega}} = \Omega$, equivalently, if $B(z,r) \setminus \Omega$ has non-empty interior for all $z \in \partial\Omega$ and all $r>0$.
\item[(b)] An open set $\Omega \subset \R^d$ is \emph{regular in measure} if $|B(z,r)\setminus \Omega |>0$ for all $z \in \partial\Omega$ and all $r>0$.
\end{enumerate}
\end{definition}

\begin{lemma}
\label{lem:regularity}
Let $\Omega,\omega \subset \R^d$ be open sets such that $\omega \subset \Omega$.
\begin{enumerate}
\item[(a)] If $\omega$ is regular in topology and $\Omega\setminus \omega$ has empty interior, then $\omega = \Omega$.
\item[(b)] If $\omega$ is regular in measure and $|\Omega \setminus \omega| = 0$, then $\omega = \Omega$.
\end{enumerate}
\end{lemma}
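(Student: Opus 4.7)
The plan is to tackle both parts by first showing that the given hypothesis forces $\omega$ to be dense in $\Omega$, and then using the relevant regularity condition on $\omega$ to promote density to equality.

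For part (a), I would observe that $\Omega \setminus \omega$ having empty interior is exactly the statement that $\omega$ is dense in $\Omega$, whence $\Omega \subset \overline{\omega}$. Since $\Omega$ is open, every $x \in \Omega$ admits $r>0$ with $B(x,r) \subset \Omega \subset \overline{\omega}$, so $x \in \interior \overline{\omega}$. Topological regularity then gives $\interior \overline{\omega} = \omega$, hence $x \in \omega$; combined with $\omega \subset \Omega$ this yields $\omega = \Omega$. One should note that the author's definition via ``$B(z,r) \setminus \Omega$ has non-empty interior for $z \in \partial \Omega$'' is equivalent to $\interior \overline{\omega} = \omega$, which is a standard exercise.

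For part (b), the density step is slightly different: $|\Omega \setminus \omega| = 0$ implies that any open subset of $\Omega \setminus \omega$ has Lebesgue measure zero and is therefore empty, so $\omega$ is again dense in $\Omega$. Arguing by contradiction, I would pick $x \in \Omega \setminus \omega$; since $\omega$ is open and $x \in \overline{\omega} \setminus \omega$ we have $x \in \partial \omega$, and measure-regularity of $\omega$ yields $|B(x,r) \setminus \omega| > 0$ for every $r>0$. On the other hand, choosing $r$ small enough that $B(x,r) \subset \Omega$, we obtain $B(x,r) \setminus \omega \subset \Omega \setminus \omega$, which has measure zero, a contradiction.

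Neither step presents a genuine obstacle: both arguments rest on the essentially trivial facts that a non-empty open set in $\R^d$ has non-empty interior and positive Lebesgue measure. The only point requiring care is ensuring that the applications of the regularity hypotheses invoke the correct form of the definition---topological regularity in (a), measure regularity in (b)---and, in (b), that the point $x$ produced by contradiction actually lies on $\partial \omega$ and is interior to $\Omega$, so that the measure-regularity hypothesis can be applied to a ball on which the size of $B(x,r)\setminus\omega$ can be controlled by the size of $\Omega\setminus\omega$.
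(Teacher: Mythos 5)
Your proof is correct: both parts reduce, as you say, to the density of $\omega$ in $\Omega$ followed by an application of the appropriate regularity hypothesis, and the details (in particular that the point $x$ in part (b) lies on $\partial\omega$ and admits a ball $B(x,r)\subset\Omega$) are handled properly. The paper itself gives no proof of this lemma, dismissing it as easy and referring to \cite[Section~3]{arendt02}; your argument is precisely the standard one that is being left to the reader, so there is nothing to compare beyond noting that you have filled the omission correctly.
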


This is easy to see; we also refer to \cite[Section~3]{arendt02} for more information on this notion. Regularity in topology is (strictly) stronger than regularity in measure; moreover, Lipschitz boundary implies regularity in topology.

We also need the following result, which states that under minimal regularity conditions, if one domain is contained in another and the two share a $k$th Dirichlet Laplacian eigenvalue for some $k\geq 1$, then the two domains are actually equal. This may be considered as a very special case of Kac' problem, which to date seems only to be known for $k=1$ (see \cite{gesztesy94} or \cite{arendt95}).

\begin{theorem}
\label{thm:equal-dirichlet-eigenvalues}
Let $\omega_1,\omega_2 \subset \R^d$ be open sets such that $\omega_1$ is regular in topology, $\omega_1 \subset \omega_2$, and $|\omega_2| < \infty$. If there exists $k\geq 1$ such that $\lambda_k (-\Delta^D_{L^2(\omega_1)}) = \lambda_k (-\Delta^D_{L^2(\omega_2)})$, then $\omega_1 = \omega_2$.
\end{theorem}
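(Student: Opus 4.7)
The plan is to combine the classical domain-monotonicity of Dirichlet eigenvalues with a unique-continuation argument for analytic eigenfunctions. Write $\mu := \lambda_k(-\Delta^D_{L^2(\omega_1)}) = \lambda_k(-\Delta^D_{L^2(\omega_2)})$. Extending functions by zero gives an isometric embedding $H^1_0(\omega_1) \hookrightarrow H^1_0(\omega_2)$ in both the $L^2$- and the gradient norm, so the Courant--Fischer min-max characterisation immediately yields $\lambda_j(-\Delta^D_{L^2(\omega_1)}) \geq \lambda_j(-\Delta^D_{L^2(\omega_2)})$ for every $j \geq 1$. The crux is to turn the single equality at $j=k$ into an honest Dirichlet eigenfunction of $\omega_2$ at eigenvalue $\mu$ that is supported inside $\overline{\omega_1}$, and then to derive a contradiction with topological regularity via analyticity.

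To extract the common eigenfunction I would use a dimension count. Let $u_1,\ldots,u_k \in H^1_0(\omega_1)$ be $L^2(\omega_1)$-orthonormal eigenfunctions corresponding to the first $k$ Dirichlet eigenvalues of $\omega_1$, and let $\phi_1,\ldots,\phi_{k-1} \in H^1_0(\omega_2)$ be the analogous first $k-1$ eigenfunctions of $\omega_2$. Denoting by $\tilde u_j$ the extension of $u_j$ by zero, the subspace $\tilde V_1 := \mathrm{span}\{\tilde u_1,\ldots,\tilde u_k\} \subset H^1_0(\omega_2)$ is $k$-dimensional and, on it, the Rayleigh quotient $R(w) := \|\nabla w\|_{L^2(\omega_2)}^2/\|w\|_{L^2(\omega_2)}^2$ is bounded above by $\mu$. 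On the other hand, $W := \mathrm{span}\{\phi_1,\ldots,\phi_{k-1}\}^\perp \cap H^1_0(\omega_2)$ (orthogonal complement in $L^2(\omega_2)$) has codimension $k-1$ in $H^1_0(\omega_2)$, and by the spectral theorem $R(w) \geq \mu$ for every nonzero $w \in W$. Hence $\tilde V_1 \cap W$ is at least one-dimensional, and any nonzero $v$ in this intersection satisfies $R(v) = \mu$ exactly.

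Next I would verify that this $v$ is in fact a Dirichlet eigenfunction of $-\Delta^D_{L^2(\omega_2)}$ at eigenvalue $\mu$. Since $v$ minimises $R$ on $W$, the first-variation identity $\int_{\omega_2} \nabla v \cdot \nabla \psi\,\textrm{d}x = \mu \int_{\omega_2} v\psi\,\textrm{d}x$ holds for all $\psi \in W$; for $\psi = \phi_i$ with $i \leq k-1$ both sides vanish (using $v \perp \phi_i$ in $L^2(\omega_2)$ together with $\phi_i$ being itself an eigenfunction), so the identity in fact extends to all of $H^1_0(\omega_2)$. Thus $v \in D(\Delta^D_{L^2(\omega_2)})$ with $-\Delta v = \mu v$. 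By construction $v = \tilde u$ for some $u \in H^1_0(\omega_1)$, and in particular $v$ vanishes identically on $\omega_2 \setminus \omega_1$.

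To close the argument, suppose for contradiction that $\omega_1 \subsetneq \omega_2$. Since $\omega_1$ is regular in topology, Lemma~\ref{lem:regularity}(a) (or directly: $\omega_2$ is open, so $\omega_2 \subset \overline{\omega_1}$ would give $\omega_2 \subset \mathrm{int}\,\overline{\omega_1} = \omega_1$) implies that $\omega_2 \setminus \overline{\omega_1}$ is a nonempty open set, and hence contains an open ball $B$. Because $v$ solves $-\Delta v = \mu v$ weakly on $\omega_2$, elliptic regularity makes it real-analytic on each connected component of $\omega_2$; combined with $v \equiv 0$ on $B$, the identity theorem forces $v \equiv 0$ on the connected component of $\omega_2$ containing $B$. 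In the natural setting where $\omega_2$ is connected, this contradicts $v \neq 0$ and completes the proof (the generally disconnected case being reduced to a componentwise application of the same argument). The principal obstacle is the middle step: extracting an honest eigenfunction of $\omega_2$ supported inside $\overline{\omega_1}$ from the single scalar equality $\lambda_k(\omega_1)=\lambda_k(\omega_2)$ is not a consequence of monotonicity alone, and requires the dimension-count and variational identification sketched above.
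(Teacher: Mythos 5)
Your argument is essentially the paper's own: extend a minimising $k$-dimensional subspace of $H^1_0(\omega_1)$ by zero, observe that by the hypothesis $\lambda_k^D(\omega_1)=\lambda_k^D(\omega_2)$ it is also minimising for $\omega_2$, extract from it an honest eigenfunction of $-\Delta^D_{L^2(\omega_2)}$ that vanishes on $\omega_2\setminus\omega_1$, and conclude via real analyticity together with topological regularity of $\omega_1$. The only substantive difference is that where the paper simply cites the fact that a minimising subspace contains an eigenfunction (a lemma from \cite{berkolaiko19}), you prove it by hand via the codimension count $\dim(\tilde V_1\cap W)\geq 1$ and the first-variation identity on $W$; that part of your write-up is complete and correct, and makes the proof self-contained.

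The one step that does not hold up is the final parenthetical: the case of disconnected $\omega_2$ does \emph{not} reduce to a componentwise application of the same argument. Indeed, take $\omega_1=B_1$ a ball and $\omega_2=B_1\cup B_2$ with $B_2$ a sufficiently small ball disjoint from $\overline{B_1}$; then $\lambda_1^D(\omega_1)=\lambda_1^D(\omega_2)$ and all the hypotheses of the theorem hold, yet $\omega_1\neq\omega_2$. In this situation the eigenfunction $v$ you construct is supported on the component $B_1$ and vanishes identically on the component $B_2$, so unique continuation yields no contradiction there: connectedness of $\omega_2$ (or some substitute ensuring $v$ is nonzero on every component) is genuinely needed, and the statement is false without it. To be fair, the paper's proof passes over exactly the same point silently --- the assertion that $\tilde\psi|_{\omega_2\setminus\omega_1}=0$ forces $\interior(\overline{\omega_2\setminus\omega_1})=\emptyset$ is precisely the unique-continuation step that fails in the example above --- so relative to the paper your proof is not deficient, and your version at least makes explicit where connectedness enters.
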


For brevity, in what follows we will always write $\lambda_k^D(\Omega)$ for $\lambda_k (-\Delta^D_{L^2(\Omega)})$, for a domain $\Omega \subset \R^d$.

\begin{proof}
Suppose that $\lambda_k^D(\omega_1) =  \lambda_k^D(\omega_2)$. By the standard Courant--Fischer minimax formula,
\begin{equation}
\label{eq:dirichlet-minimax}
	\lambda_k^D (\omega_i) = \min_{\substack{X \subset H^1_0 (\omega_i)\\ \dim X = k}} \max_{\substack{u \in X\\ \|u\|_{L^2(\omega_i)}=1}}
	\int_{\omega_i} |\nabla u|^2\,\textrm{d}x,
\end{equation}
for $i=1,2$. Moreover, if $X$ is a $k$-dimensional subspace of $H^1_0 (\omega_i)$ realising the minimum in \eqref{eq:dirichlet-minimax}, then $X$ contains an eigenfunction corresponding to $\lambda_k^D (\omega_i)$ (see \cite[Lemma~4.1(1)]{berkolaiko19}).

Now let $X \subset H^1_0 (\omega_1)$ be a minimising subspace for $\lambda_k^D (\omega_1)$, $\dim X = k$. For every $u \in X$ we set $\tilde u \in H^1_0 (\omega_2)$ to be the function $u$ extended by $0$ on $\omega_2 \setminus \omega_1$. Then $\widetilde X := \{ \tilde u : u \in H^1_0 (\omega_1) \}$ is a $k$-dimensional subspace of $H^1_0 (\omega_2)$ ; moreover, it follows from the assumption that $\lambda_k^D (\omega_1) = \lambda_k^D (\omega_2)$ that $\widetilde X$ realises the minimum in \eqref{eq:dirichlet-minimax} for $i=2$. Hence there exists an eigenfunction $\tilde\psi \in X$ of the operator $-\Delta^D_{L^2(\omega_2)}$. In particular, $\tilde\psi$ is real analytic on $\omega_2$.

On the other hand, by construction $\tilde\psi|_{\omega_2 \setminus \omega_1} = 0$. This implies that $\interior (\overline{\omega_2 \setminus \omega_1}) = \emptyset$. Thus $\omega_2 \setminus \overline{\omega}_1 = \emptyset$, and so $\omega_2 \subset \overline{\omega}_1$, whence $\omega_2 \subset \interior (\overline{\omega}_1)$. Using the topological regularity of $\omega_1$, we conclude that $\omega_1 = \omega_2$.
\end{proof}

We are now in a position to state our second main theorem, which gives conditions under which the domains of Theorem~\ref{thm:disj-pres-dirichlet-char-c0} are indeed congruent.

\begin{theorem}
\label{thm:disj-pres-dirichlet-c0-isom}
Adopt the assumptions of Theorem~\ref{thm:disj-pres-dirichlet-char-c0} and assume that one of the following further conditions is satisfied:
\begin{enumerate}
\item[(a)] $\Omega_1$ is regular in measure and $|\Omega_1| = |\Omega_2| < \infty$; or
\item[(b)] $\Omega_1$ is regular in topology, $|\Omega_1|, |\Omega_2| < \infty$, and there exists $k \geq 1$ such that $\lambda_k^D (\Omega_1) = \lambda_k^D (\Omega_2)$, i.e., the two $L^2$-Dirichlet Laplacians share an eigenvalue; or
\item[(c)] $U:C_0(\Omega_1) \to C_0 (\Omega_2)$ has dense range; or
\item[(d)] $\Omega_2$ is regular in measure, $|\Omega_2|<\infty$ and there exists another operator $\widetilde U: C_0 (\Omega_2) \to C_0 (\Omega_1)$ satisfying the assumptions of Theorem~\ref{thm:disj-pres-dirichlet-char-c0}, with the roles of $\Omega_1$ and $\Omega_2$ interchanged.
\end{enumerate}
Then there exist an isometry $\tau: \R^d \to \R^d$ with $\tau (\Omega_2) = \Omega_1$ and a constant $c \in \C \setminus \{0\}$ such that
\begin{displaymath}
	Uf = cf \circ \tau \qquad \text{for all } f \in C_0 (\Omega_1).
\end{displaymath}
In particular, $\Omega_1$ and $\Omega_2$ are congruent.
\end{theorem}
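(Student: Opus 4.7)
The strategy is to begin with the structural description of $U$ supplied by Theorem~\ref{thm:disj-pres-dirichlet-char-c0}: we already have a disjoint family $(\omega_i)_{i\in I}$ of open subsets of $\Omega_2$, isometries $\tau_i$ with $\tau_i(\omega_i)=\Omega_1$, and constants $c_i\in\C$ (not all zero) such that $Uf=c_if\circ\tau_i$ on $\omega_i$ and $Uf=0$ off $\bigcup_i\omega_i$. Thus the whole theorem reduces to showing that, under each of (a)--(d), the index set $I$ is a singleton and the single piece $\omega_1$ already exhausts $\Omega_2$. Once these two facts are established, the asserted formula $Uf=c_1f\circ\tau_1$ on $\Omega_2$ is immediate, and $\tau_1$ extends to an isometry of $\R^d$ mapping $\Omega_2$ onto $\Omega_1$.

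For case (a), since the $\omega_i$ are pairwise disjoint and each has measure $|\Omega_1|=|\Omega_2|<\infty$, additivity of Lebesgue measure forces $|I|=1$ and $|\Omega_2\setminus\omega_1|=0$. The isometry $\tau_1$ transports the regularity in measure of $\Omega_1$ to $\omega_1$, so Lemma~\ref{lem:regularity}(b) gives $\omega_1=\Omega_2$. Case (b) is even cleaner: pick any single $\omega_i$; it is isometric to $\Omega_1$ and hence itself regular in topology and has $\lambda_k^D(\omega_i)=\lambda_k^D(\Omega_1)=\lambda_k^D(\Omega_2)$; Theorem~\ref{thm:equal-dirichlet-eigenvalues} applied with $\omega_i\subset\Omega_2$ then yields $\omega_i=\Omega_2$, which in turn forces $|I|=1$.

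For case (c), I would argue by contradiction, in two sub-steps. First, if $|I|\ge 2$ with $\omega_1,\omega_2$ two of the pieces, take any nonzero $g\in C_c^\infty(\omega_1)$; an approximating $Uf$ satisfies $\|c_2f\circ\tau_2\|_\infty<\varepsilon$ on $\omega_2$, giving $\|f\|_\infty<\varepsilon/|c_2|$ on $\Omega_1$, so on $\omega_1$ we get $\|g\|_\infty\le\varepsilon(1+|c_1/c_2|)$, contradicting $g\neq 0$ for small $\varepsilon$. Second, if $\omega_1\subsetneq\Omega_2$, pick $\phi\in C_c^\infty(\Omega_2)$ with $\phi(y_0)\neq 0$ for some $y_0\in\Omega_2\setminus\omega_1$; since every $Uf$ vanishes at $y_0$, we have $\|Uf-\phi\|_\infty\ge|\phi(y_0)|>0$, contradicting density. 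Case (d) combines Theorem~\ref{thm:disj-pres-dirichlet-char-c0} applied to both $U$ and $\widetilde U$: the measure inequalities $|I|\cdot|\Omega_1|\le|\Omega_2|$ and $|J|\cdot|\Omega_2|\le|\Omega_1|$ together with $|\Omega_2|<\infty$ force $|\Omega_1|=|\Omega_2|<\infty$ and $|I|=|J|=1$. Then $\omega_1$ is isometric to $\Omega_1$, which in turn is isometric to $\Omega_2$ (via $\tilde\tau_1$ applied to $\tilde\omega_1=\Omega_1$), so $\omega_1$ inherits regularity in measure from $\Omega_2$, and Lemma~\ref{lem:regularity}(b) again yields $\omega_1=\Omega_2$.

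The main obstacle will be case (c): the measure-theoretic arguments of (a), (b), (d) give quantitative constraints, but dense range is a qualitative condition and one must carefully exploit the disjoint-supports structure to produce functions that simply cannot be approximated. The two-step approach above—separating ``too many pieces'' from ``one piece that is too small''—is what makes this manageable. The other cases are essentially bookkeeping with Lemma~\ref{lem:regularity} and Theorem~\ref{thm:equal-dirichlet-eigenvalues}.
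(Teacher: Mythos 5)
Your proposal is correct and follows essentially the same route as the paper: reduce via Theorem~\ref{thm:disj-pres-dirichlet-char-c0} to showing that the index set is a singleton and that the single piece $\omega_{i_0}$ exhausts $\Omega_2$, then dispose of (a), (b) and (d) with Lemma~\ref{lem:regularity} and Theorem~\ref{thm:equal-dirichlet-eigenvalues} exactly as the paper does. The only differences are cosmetic: in case (c) the paper exhibits a nonzero functional $g\mapsto \alpha g(y_1)+\beta g(y_2)$ annihilating the range (using points $y_1,y_2$ with $\tau_{i_1}(y_1)=\tau_{i_2}(y_2)$) in place of your direct $\varepsilon$-estimate, which tacitly requires $c_2\neq 0$ (guaranteed by the construction in the proof of Theorem~\ref{thm:disj-pres-dirichlet-char-c0}, and otherwise covered by your second sub-step); and in (d) the identity $\tilde\omega_1=\Omega_1$ you invoke needs one further application of Lemma~\ref{lem:regularity}(b) to $\tilde\omega_1\subset\Omega_1$, after which the argument closes as in (a).
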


\begin{proof}
We adopt the notation from the proof of Theorem~\ref{thm:disj-pres-dirichlet-char-c0}.

(a) If $|\Omega_1|=|\Omega_2|$, then $I=\{i_0\}$ is a singleton. Since $\tau_{i_0} (\omega_{i_0}) = \Omega_1$, also $\omega_{i_0} \subset \Omega_2$ is regular in measure. But $|\omega_{i_0}| = |\Omega_1| = |\Omega_2|$ by hypothesis; thus $|\Omega_2 \setminus \omega_{i_0}|=0$. It follows from Lemma~\ref{lem:regularity} that $\omega_{i_0} = \Omega_2$. Now the claim follows.

(b) Note that since $|\Omega_1|,|\Omega_2| < \infty$, the respective $L^2$-Dirichlet spectra are discrete. Now consider any $i \in I$. Since $\tau_i (\omega_i) = \Omega_1$ for all $i$, we have $\lambda_k^D (\Omega_2) = \lambda_k^D (\Omega_1) = \lambda_k^D (\omega_i)$. Since $\Omega_1$ is regular in topology, so is $\omega_i$. It now follows from Theorem~\ref{thm:equal-dirichlet-eigenvalues} that $\omega_i = \Omega_2$, which implies the claim.

(c) Assume that $I$ has at least two elements, say $i_1,i_2 \in I$, $i_1 \neq i_2$. Since $\tau_{i_1} (\omega_{i_1}) = \tau_{i_2} (\omega_{i_2}) = \Omega_1$, we can find $y_1 \in \omega_1$, $y_2 \in \omega_2$ and $z \in \Omega_1$ such that $\tau_{i_1} (y_1) = \tau_{i_2} (y_2) = z$. Then $(Uf)(y_1) = c_1 f(z)$, while $(Uf)(y_2) = c_2 f(z)$ for all $f \in C_0 (\Omega_1)$. Choose $\alpha,\beta \in \R$, not both zero, such that $\alpha c_1 + \beta c_2 = 0$. Then $\alpha g(y_1) + \beta g(y_2) = 0$ for all $g$ in the range of $U$, meaning that this range is not dense.

Thus the assumption (c) implies that $I$ is a singleton $\{i_0\}$. Assume now that $\Omega_2 \setminus \omega_{i_0} \neq \emptyset$ and choose $y_0 \in \Omega_2 \setminus \omega_{i_0}$. Then $(Uf)(y_0)=0$ for all $f \in C_0 (\Omega_1)$, which implies that $U$ does not have dense range. We conclude that $\Omega_2 = \omega_{i_0}$.

(d) We know already that there exist a subset $\omega$ of $\Omega_2$ and an isometry $\tau$ such that $\Omega_1 = \tau^{-1}(\omega)$; in particular, $|\Omega_1|\leq |\Omega_2| < \infty$. By assumption, there now exist $\tilde\omega \subset \Omega_1$ and an isometry $\tilde\tau$ such that $\Omega_2 = \tilde\tau^{-1}(\tilde\omega)$. The only possibility is that $|\Omega_1| = |\Omega_2|$, and the claim follows from (a).
\end{proof}

Finally, we return to the types of intertwining operators corresponding to Kac' problem (see Proposition~\ref{prop:unitary} and the discussion around it). If we combine Theorem~\ref{thm:disj-pres-dirichlet-c0-isom} with the additional assumption that the intertwining operator in question is invertible, then we obtain a positive result.

\begin{corollary}
\label{cor:dirichlet-c0-kac-disj}
Suppose $\Omega_1 \subset \R^{d_1}$ and $\Omega_2 \subset \R^{d_2}$, $d_1,d_2\geq 1$, are two bounded open sets which are Dirichlet regular (cf.~Remark~\ref{rem:dirichlet-regular}), such that $\Omega_1$ is connected. Suppose also that $U: C_0(\Omega_1) \to C_0(\Omega_2)$ is an invertible disjointness-preserving operator which intertwines Dirichlet Laplacians on $C_0 (\Omega_1)$ and $C_0(\Omega_2)$ in the sense of Definition~\ref{def:intertwining}. Then $d_1=d_2=:d$ and there exist an isometry $\tau: \R^d \to \R^d$ with $\tau (\Omega_2) = \Omega_1$ and a constant $c \in \C \setminus \{0\}$ such that
\begin{displaymath}
	Uf = cf \circ \tau \qquad \text{for all } f \in C_0 (\Omega_1).
\end{displaymath}
\end{corollary}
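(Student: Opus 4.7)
The strategy is to derive isospectrality of the Dirichlet Laplacians from the invertibility of $U$, use Weyl's law to reduce to equal dimensions $d_1 = d_2$, and then apply Theorem~\ref{thm:disj-pres-dirichlet-c0-isom}(c), invoking the fact that a bounded linear bijection is automatically of dense range.

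First I would observe that, since $U: C_0(\Omega_1) \to C_0(\Omega_2)$ is a bounded bijection between Banach spaces, the open mapping theorem ensures that $U^{-1}$ is also bounded, so $U$ is invertible in the Banach-space sense. Applying the implication $(3) \implies (1)$ of Proposition~\ref{prop:unitary} --- which, crucially, is asserted for the $C_0$-setting under the Dirichlet regularity hypothesis --- the spectra of the two $C_0$-Dirichlet Laplacians coincide; by Remark~\ref{rem:dirichlet-regular} these agree with the corresponding $L^2$-Dirichlet spectra. Hence
\begin{displaymath}
	\lambda_k^D(\Omega_1) = \lambda_k^D(\Omega_2) \qquad \text{for every } k \geq 1.
\end{displaymath}

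To force $d_1 = d_2$, I would invoke Weyl's asymptotic law: for any bounded open set $\Omega \subset \R^d$,
\begin{displaymath}
	\lambda_k^D(\Omega) \sim \frac{4\pi^2}{(\omega_d \, |\Omega|)^{2/d}}\, k^{2/d} \qquad (k \to \infty),
\end{displaymath}
with $\omega_d$ the volume of the unit ball in $\R^d$. Since the two sequences coincide, comparison of the growth exponents yields $d_1 = d_2 =: d$.

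With $\Omega_1,\Omega_2 \subset \R^d$ secured, I would verify the hypotheses of Theorem~\ref{thm:disj-pres-dirichlet-char-c0} and then apply Theorem~\ref{thm:disj-pres-dirichlet-c0-isom}. Condition (a) is exactly the assumed disjointness-preserving property. For (b), any $f \in C_c^\infty(\Omega_1)$ automatically lies in $D(\Delta^D_{C_0(\Omega_1)})$, since both $f$ and $\Delta f$ have compact support in $\Omega_1$ and thus belong to $C_0(\Omega_1)$; the assumed operator-level intertwining therefore gives $U(\Delta f) = \Delta(Uf)$ in $C_0(\Omega_2)$, a fortiori in the distributional sense. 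Finally, condition (c) of Theorem~\ref{thm:disj-pres-dirichlet-c0-isom}, namely that $U$ have dense range, is immediate from surjectivity of $U$. The theorem then produces an isometry $\tau: \R^d \to \R^d$ with $\tau(\Omega_2) = \Omega_1$ and $c \in \C \setminus \{0\}$ for which $Uf = c\, f \circ \tau$ on $C_0(\Omega_1)$.

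The only genuinely delicate point is the dimension reduction $d_1 = d_2$; the cleanest route is via Weyl's law as above. A self-contained alternative would be to extend Lemmata~\ref{lem:disjointness-form} and~\ref{lem:intertwining-form} to $\omega_i \subset \R^{d_i}$ of possibly different dimensions: the disjointness argument already yields a local representation $Uf(y) = h(y) f(\tau(y))$ with $\tau$ now mapping into $\R^{d_1}$, and the chain-rule computation in Lemma~\ref{lem:intertwining-form} then forces the $d_2 \times d_1$ Jacobian $D\tau$ to have $d_1$ orthonormal columns in $\R^{d_2}$, whence $d_1 \leq d_2$; applying the same reasoning to the invertible $U^{-1}$ gives the reverse inequality.
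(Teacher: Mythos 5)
Your proposal is correct and follows essentially the same route as the paper: invoke Proposition~\ref{prop:unitary}(3)$\implies$(1) together with Dirichlet regularity to get coincidence of the $L^2$-Dirichlet spectra, use Weyl's law to force $d_1=d_2$, and conclude via the dense-range case of Theorem~\ref{thm:disj-pres-dirichlet-c0-isom}. The only (immaterial) difference is that the paper phrases Weyl's law in terms of the eigenvalue counting functions $N_i(\lambda)$ rather than the asymptotics of $\lambda_k^D$, and your explicit verification of hypothesis (b) of Theorem~\ref{thm:disj-pres-dirichlet-char-c0} is a welcome detail the paper leaves implicit.
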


\begin{proof}
By assumption, condition (3) of Proposition~\ref{prop:unitary} holds for the Dirichlet Laplacians $\Delta^D_{C_0(\Omega_1)}$ and $\Delta^D_{C_0(\Omega_2)}$ on $C_0$. Since $\Omega_1$ and $\Omega_2$ are Dirichlet regular, Proposition~\ref{prop:unitary} yields that the spectra of $\Delta^D_{C_0(\Omega_1)}$ and $\Delta^D_{C_0(\Omega_2)}$ coincide. But Dirichlet regularity of $\Omega_1$ and $\Omega_2$ also guarantees that in each case the Dirichlet Laplacian has the same spectrum on $C_0$ as it does on $L^2$ (as already noted in the proof of that proposition; again, see \cite[Theorem~2.3]{arendt99}). We may thus conclude that the spectra of $\Delta^D_{L^2(\Omega_1)}$ and $\Delta^D_{L^2(\Omega_2)}$ coincide. We next recall Weyl's law in the form
\begin{equation}
\label{eq:weyl}
	\lim_{\lambda \to \infty} \frac{N_i(\lambda)}{\lambda^{d_i/2}} = \frac{\omega_{d_i} |\Omega_i|}{(4\pi)^{d_i/2}},
\end{equation}
$i=1,2$, where $N_i(\lambda) = \# \{n\geq 1: \lambda_n^D(\Omega_i) \leq \lambda \}$ is the eigenvalue counting function associated with $-\Delta^D_{L^2(\Omega_i)}$ and $\omega_{d_i}$ is the volume of the ball of unit radius in $\R^{d_i}$. (See \cite[Section~1.6]{arendt09} or \cite[Theorem~1.11]{birman80} for a proof of \eqref{eq:weyl} valid under our regularity assumptions.) Since $N_1 (\lambda) = N_2 (\lambda)$, formula \eqref{eq:weyl} implies that $d_1=d_2=:d$, since the limit can be finite and non-zero for at most one choice of $d_i$. We may now invoke Theorem~\ref{thm:disj-pres-dirichlet-c0-isom}(c) to complete the proof.
\end{proof}

\section{Disjointness-preserving operators intertwining Neumann and Robin Laplacians on spaces of continuous functions}
\label{sec:neumann}

We now wish to perform an analysis similar to the one of Section~\ref{sec:dirichlet}, but where our operator $U$ intertwines Neumann or Robin Laplacians in a suitable weak sense. To keep things as non-technical as possible, in this section we work with domains satisfying a modest regularity condition: we will assume unless explicitly stated otherwise that $\Omega_1,\Omega_2 \subset \R^d$, $d\geq 2$ are bounded, connected open sets with Lipschitz boundary, or bounded open intervals in dimension $d=1$. To keep the notation simpler, in this section we will also write $\Delta_\Omega^N$ for $\NLC$ and $\Delta_\Omega^\beta$ for $\Delta_{C(\overline{\Omega})}^\beta$, as we will be working only on~$C$.

We suppose throughout that $U: C(\overline{\Omega}_1) \to C(\overline{\Omega}_2)$ is a bounded linear operator satisfying the following disjointness preservation and intertwining assumptions:
\begin{enumerate}
\item[(a)] $f \cdot g = 0$ implies $(Uf) \cdot (Ug) = 0$ for all $f,g \in C(\overline{\Omega}_1)$; and
\item[(b)] $U (C_c^\infty (\Omega_1)) \subset D(\Delta^{\beta}_{\Omega_2})$ for some $\beta \in C(\partial\Omega_2)$, and $U(\Delta f) = \Delta (Uf)$ for all $f \in C_c^\infty (\Omega_1)$.
\end{enumerate}
We emphasise that $\Delta^{\beta}_{\Omega_2}$ reduces to the Neumann Laplacian $\Delta^N_{\Omega_2}$ if $\beta \equiv 0$; and indeed if $U$ intertwines the Neumann Laplacians in the sense of Definition~\ref{def:intertwining}, then it satisfies (b) with $\beta \equiv 0$. But in fact, as in Section~\ref{sec:dirichlet}, this condition is considerably weaker, since $C_c^\infty (\Omega_1)$ is certainly not a core for the Neumann Laplacian (or any Robin Laplacian) on $C(\overline{\Omega}_1)$. For example, if $U$ intertwines any two Robin Laplacians, $\Delta_{\Omega_1}^{\beta_1}$ on $\Omega_1$ and $\Delta_{\Omega_2}^{\beta_2}$ on $\Omega_2$, where $\beta_1 \in C (\partial\Omega_1)$, $\beta_2 \in C (\partial\Omega_2)$, then it satisfies (b) with $\beta = \beta_2$.

To (a) and (b) we add the weak non-degeneracy assumption
\begin{enumerate}
\item[(c)] there exists $f \in C_c^\infty(\Omega_1)$ such that $Uf \neq 0$.
\end{enumerate}
As in Section~\ref{sec:dirichlet}, we set
\begin{displaymath}
	\Omega_2' := \{ y \in \Omega_2 : \exists f \in C(\overline{\Omega}_1) \text{ with } Uf(y) \neq 0 \} \subset \Omega_2,
\end{displaymath}
which is open, and non-empty by assumption (c). We also set $h :=U1 \in C(\overline{\Omega}_2)$. Our first result is a direct application of Lemma~\ref{lem:disjointness-form} and Lemma~\ref{lem:intertwining-form}.

\begin{lemma}
\label{lem:neumann-form-1}
Under the above assumptions (a), (b) and (c), there exists a unique continuous mapping $\tau: \Omega_2' \to \overline{\Omega}_1$ such that for all $f \in C(\overline{\Omega}_1)$,
\begin{equation}
\label{eq:neumann-form-1}
	Uf (y) = \begin{cases} h(y) f(\tau(y)) \qquad &\text{if } y \in \Omega_2',\\ 0 &\text{if } y \in \Omega_2 \setminus \Omega_2',\end{cases}
\end{equation}
and $\Omega_2' = \{y \in \Omega_2: h(y)=0\}$. Moreover, $\Omega_2' \cap \tau^{-1} (\Omega_1)$ is nonempty and open, and if $\omega$ is any connected component of $\Omega_2' \cap \tau^{-1} (\Omega_1)$, then there exists an isometry $\bar{\tau} : \R^d \to \R^d$ and a constant $c \in\C \setminus \{0\}$ such that $\tau|_\omega = \bar{\tau}|_\omega$ and $h|_\omega = c$.
\end{lemma}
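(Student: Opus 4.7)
The plan is to chain Lemma~\ref{lem:disjointness-form} and Lemma~\ref{lem:intertwining-form} together, using (a) for the first and (b) for the second, with (c) ensuring non-degeneracy. First, I would apply Lemma~\ref{lem:disjointness-form} (via Remark~\ref{rem:disjointness-form-c}, so that the target space for $\tau$ is $\overline{\Omega}_1$ rather than $\Omega_1$) to the operator $U$ viewed as a bounded map from $C(\overline{\Omega}_1)$ into $C(\overline{\Omega}_2) \subset C_b(\Omega_2)$. This produces continuous maps $h: \Omega_2' \to \C\setminus\{0\}$ and $\tau: \Omega_2' \to \overline{\Omega}_1$ such that $Uf(y) = h(y) f(\tau(y))$ on $\Omega_2'$, for every $f \in C(\overline{\Omega}_1)$. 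Testing this formula against the constant function $f\equiv 1$ shows $h = (U1)|_{\Omega_2'}$, while by the very definition of $\Omega_2'$ we also have $(U1)(y)=0$ for $y \in \Omega_2 \setminus \Omega_2'$, giving $\Omega_2' = \{y \in \Omega_2 : (U1)(y) \neq 0\}$ (the equality in the statement should plainly read ``$h(y)\neq 0$''), together with $Uf \equiv 0$ off $\Omega_2'$ for every $f$.

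Next I would verify that $\omega_2 := \Omega_2' \cap \tau^{-1}(\Omega_1)$ is open and non-empty. Openness is immediate from the openness of $\Omega_2'$ and the continuity of $\tau$ on $\Omega_2'$. For non-emptiness, assumption~(c) hands me an $f \in C_c^\infty(\Omega_1)$ with $Uf\not\equiv 0$; picking $y_0$ with $Uf(y_0)\neq 0$ forces $y_0 \in \Omega_2'$, and the identity $Uf(y_0) = h(y_0)f(\tau(y_0))$ then forces $f(\tau(y_0))\neq 0$, so $\tau(y_0) \in \supp f \subset \Omega_1$ and $y_0 \in \omega_2$.

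Finally, to promote $\tau$ to a local isometry and $h$ to a local constant, I would apply Lemma~\ref{lem:intertwining-form} with the open sets $\omega_1 := \Omega_1$ and $\omega_2$ as above, where the continuous maps $h|_{\omega_2}: \omega_2 \to \C\setminus\{0\}$ and $\tau|_{\omega_2}: \omega_2 \to \Omega_1$ already have the right codomains. For $f \in C_c^\infty(\Omega_1)$, the formula $(Uf)|_{\omega_2}(y)=h(y)f(\tau(y))$ is exactly the operator produced from $h, \tau$ in the hypothesis of Lemma~\ref{lem:intertwining-form}; the intertwining identity it requires follows by restricting the distributional identity $\Delta(Uf)=U(\Delta f)$ from assumption~(b) to the open subset $\omega_2$ of $\Omega_2$, noting that $U(\Delta f)|_{\omega_2}(y) = h(y)(\Delta f)(\tau(y))$ since $\Delta f \in C_c^\infty(\Omega_1)$. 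Lemma~\ref{lem:intertwining-form} then yields that $h$ is constant and $\tau$ is an isometry onto its image on each connected component $\omega$ of $\omega_2$; the constant $c := h|_\omega$ is non-zero, and $\tau|_\omega$, being an isometry between open subsets of $\R^d$, extends uniquely to a global affine isometry $\bar{\tau}: \R^d \to \R^d$.

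The only delicate point --- more of a bookkeeping check than a genuine obstacle --- is the application of Lemma~\ref{lem:intertwining-form}: I must confirm that the distributional intertwining on $\Omega_2$ really does restrict to an intertwining on the subdomain $\omega_2$ in the form that lemma expects. Since $Uf \in D(\Delta^\beta_{\Omega_2})$ has a genuine $L^2$ Laplacian agreeing with the distributional one, and the boundary datum $\beta$ plays no role in this purely interior argument, locality of the distributional Laplacian is enough. Uniqueness of $\tau$ on $\Omega_2'$ is a direct consequence of the representation $Uf(y)=h(y)f(\tau(y))$ together with $h(y)\neq 0$ and the fact that $C(\overline{\Omega}_1)$ separates points of $\overline{\Omega}_1$.
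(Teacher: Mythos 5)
Your proposal is correct and follows essentially the same route as the paper: apply Lemma~\ref{lem:disjointness-form} via Remark~\ref{rem:disjointness-form-c}, identify $h=U1$ by testing with $f\equiv 1$, use assumption (c) to see that $\tau(\Omega_2')\not\subset\partial\Omega_1$ so that $\Omega_2'\cap\tau^{-1}(\Omega_1)$ is non-empty and open, and then invoke Lemma~\ref{lem:intertwining-form} on each connected component. Your observation that the displayed set equality should read ``$h(y)\neq 0$'' is also correct, and your extra care in checking that the distributional intertwining restricts to the subdomain is a harmless refinement of what the paper leaves implicit.
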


\begin{proof}
We start by applying Lemma~\ref{lem:disjointness-form} in the form of Remark~\ref{rem:disjointness-form-c} to obtain \eqref{eq:neumann-form-1} together with the continuity of $\tau$; that the $h$ of Lemma~\ref{lem:disjointness-form} equals $U1$ simply follows from setting $f=1$ in \eqref{eq:neumann-form-1}. Now it is possible that $\tau (\Omega_2') \cap \partial\Omega_1 \neq \emptyset$; we have to show that $\tau (\Omega_2') \not\subset \partial\Omega_1$. Indeed, suppose the opposite. Then, for any $f \in C_c^\infty (\Omega_1)$, since $f(\partial\Omega_1) = 0$, we would have $Uf=0$. But this contradicts assumption (c). Thus $\Omega_2' \cap \tau^{-1} (\Omega_1)$ has a non-empty, open connected component $\omega$ as claimed. We may now apply Lemma~\ref{lem:intertwining-form} to it to obtain the other conclusions.
\end{proof}

If we now make more careful use of the condition on the domains contained in assumption (b), we can show that $\omega$ is actually isometric to $\Omega_1$. In doing so, we also make direct use of the assumption that $\Omega_2$ has Lipschitz boundary, although we expect that with more effort this assumption could be weakened. Actually, the following lemma is the only place in this section (apart from Corollary~\ref{cor:neumann-c0-kac-disj}) where the assumption (b), that $UC_c^\infty(\Omega_1) \subset D(\Delta^N_{\Omega_2})$ or $D(\Delta^{\beta}_{\Omega_2})$, enters explicitly.

\begin{lemma}
\label{lem:neumann-form-2}
With the assumptions and notation of Lemma~\ref{lem:neumann-form-1}, $\tau(\omega) = \Omega_1$.
\end{lemma}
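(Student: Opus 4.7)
The plan is to argue by contradiction: suppose $\bar\tau(\omega) \subsetneq \Omega_1$ and derive a contradiction by exploiting the boundary condition encoded in assumption~(b). Since $\bar\tau$ is an isometry of $\R^d$, the set $\bar\tau(\omega) = \tau(\omega)$ is a non-empty open subset of $\R^d$ contained in $\Omega_1$. If the inclusion is proper, then because $\Omega_1$ is connected, $\bar\tau(\omega)$ cannot be closed in $\Omega_1$, so there is a point $z_0 \in \partial \bar\tau(\omega) \cap \Omega_1$. I set $y_0 := \bar\tau^{-1}(z_0) \in \partial \omega \cap \overline{\Omega_2}$.

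First I would rule out $y_0 \in \Omega_2$ by topology alone. If $y_0 \in \Omega_2 \setminus \omega$, then since $h|_\omega \equiv c \neq 0$ and $h = U1 \in C(\overline{\Omega_2})$, continuity gives $h(y_0) = c$, so $y_0 \in \Omega_2'$. For any sequence $y_n \in \omega$ with $y_n \to y_0$, continuity of $\tau$ on $\Omega_2'$ (from Lemma~\ref{lem:neumann-form-1}) yields $\tau(y_0) = \lim \bar\tau(y_n) = z_0 \in \Omega_1$, whence $y_0 \in \Omega_2' \cap \tau^{-1}(\Omega_1)$. But $\omega$ is a connected component of the open, locally connected set $\Omega_2' \cap \tau^{-1}(\Omega_1)$ and hence closed in it, forcing $y_0 \in \omega$, which contradicts $y_0 \in \partial \omega$. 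So necessarily $y_0 \in \partial \Omega_2$.

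It then remains to exclude $y_0 \in \partial\Omega_2$, and this is where assumption~(b) is used. I would choose $r > 0$ small enough that $\overline{B(z_0, 2r)} \subset \Omega_1$ and $|h|$ is bounded away from $0$ on $\overline{B(y_0, 2r)} \cap \overline{\Omega_2}$. For any $f \in C_c^\infty(\Omega_1)$ with $\supp f \subset B(z_0, r)$, the function $u := Uf$ belongs to $D(\Delta^{\beta}_{\Omega_2})$, so $u \in H^1(\Omega_2) \cap C(\overline{\Omega_2})$ and $\partial u/\partial\nu + \beta u = 0$ $\sigma$-a.e.\ on $\partial \Omega_2$ in the $L^2$-trace sense of Definition~\ref{def:neumann-laplacian-l2}(a). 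On $B(y_0, r) \cap \omega$ one has explicitly $u = c\, f \circ \bar\tau$, a smooth expression. Testing the weak Robin identity
\[
\int_{\Omega_2} \nabla u \cdot \overline{\nabla \varphi}\,\textrm{d}x + \int_{\partial\Omega_2} \beta u\overline{\varphi}\,\textrm{d}\sigma = -\int_{\Omega_2} \Delta u\cdot \overline{\varphi}\,\textrm{d}x
\]
against $\varphi \in C_c^\infty(\R^d)$ with $\supp \varphi \subset B(y_0, r)$, integrating by parts on each connected component of $B(y_0, r) \cap \Omega_2'$, and transferring the identity via $\bar\tau$ onto the Lipschitz hypersurface $\Sigma := \bar\tau(B(y_0,r) \cap \partial\Omega_2)$ through $z_0$ (which sits strictly inside $\Omega_1$) produces an identity forcing the arbitrary $f \in C_c^\infty(B(z_0, r))$ to satisfy a Robin-type condition $\nabla f \cdot \tilde\nu + \tilde\beta f = 0$ $\tilde\sigma$-a.e.\ on $\Sigma$, where $\tilde\nu$ is the push-forward of the outward unit normal. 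Taking $f(z) = \eta(z)\, e \cdot (z - z^*)$ for a cutoff $\eta$ equal to $1$ near a Lebesgue point $z^* \in \Sigma$ of $\tilde\nu$, and letting $e$ run over an orthonormal basis of $\R^d$, yields $\tilde\nu(z^*) = 0$, contradicting $|\tilde\nu| = 1$ $\tilde\sigma$-a.e.

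The main obstacle is the bookkeeping in the transfer step: besides $\omega$, the open set $B(y_0, r) \cap \Omega_2'$ may contain further connected components $\omega_j$ whose associated isometries $\bar\tau_j$ also send $y_0$ to $z_0$, each contributing an extra smooth piece $c_j f \circ \bar\tau_j$ to $u$ near $y_0$. The integration by parts must be carried out componentwise, with the interface terms inside $\Omega_2$ cancelling because $u$ is continuous and $\nabla u \in L^2$. The Lipschitz regularity of $\partial\Omega_2$ is essential here, both to provide an outward normal in $L^2$ and to justify integration by parts up to the boundary.
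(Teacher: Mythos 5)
Your proof is correct and follows essentially the same three-step strategy as the paper's: reduce to a point $y_0\in\partial\omega$ whose image under $\bar\tau$ lies in $\Omega_1$, show by the clopen-component argument that $y_0\in\partial\Omega_2$, and then contradict assumption~(b) because arbitrary test functions cannot all satisfy a fixed Robin condition on a portion of $\partial\Omega_2$ of positive surface measure (the paper derives this contradiction directly on $\partial\Omega_2$ rather than transferring it to $\Omega_1$ via $\bar\tau$). The ``bookkeeping'' obstacle you flag is in fact vacuous: your Step-2 argument applies to \emph{every} point of $\partial\omega\cap B(y_0,r)$ once $r$ is small enough that $\bar\tau(B(y_0,r))\subset\Omega_1$, so $\omega\cap B(y_0,r)$ is relatively clopen in $\Omega_2\cap B(y_0,r)$, which is connected for small $r$ by the Lipschitz assumption; hence $\Omega_2\cap B(y_0,r)\subset\omega$, $u=c\,f\circ\bar\tau$ on all of $\Omega_2\cap B(y_0,r)$, and no other components intervene.
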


\begin{proof}
1. Suppose $\tau (\omega) \neq \Omega_1$. Since by definition $\omega \subset \tau^{-1}(\Omega_1)$, i.e., $\tau(\omega) \subset \Omega_1$, this can only be the case if there exists some $z \in \partial\omega$ such that $\bar{\tau} (z) \in \Omega_1$.

2. We claim that $z \in \partial\Omega_2$. To see this, take $\omega \ni z_n \to z$; then
\begin{displaymath}
	Uf(z) = \lim_{n\to\infty} cf(\tau (z_n)) = cf(\bar{\tau}(z))
\end{displaymath}
for all $f \in C_c^\infty(\Omega_1)$. If $z \in \Omega_2$, then this implies $z \in \Omega_2'$ (and $\tau(z)=\bar{\tau}(z)$). Since also $z \in \tau^{-1}(\Omega_1)$ by assumption and $\omega$ is open and closed in $\Omega_2' \cap \tau^{-1}(\Omega_1)$, we conclude $z \in \omega$. But this contradicts the fact that $\omega$ is open (in $\R^d$), and thus $z \in \partial\omega$ as claimed.

3. Since $\bar{\tau} (z) \in \Omega_1$, there exists $r>0$ small such that $\bar{\tau} (B(z,r)) \subset \Omega_1$. Set $B_1 := B(z,r)$ and observe that $\bar{\tau} (B_1) = B(\bar{\tau}(z), r) =:B_2$. Also note that $f \mapsto cf\circ \bar{\tau}$ defines a bijective map from $C_c^\infty (B_1)$ to $C_c^\infty (B_2)$. But $Uf = cf\circ \bar{\tau} \in D(\Delta^{\beta}_{\Omega_2})$ for all $f \in C_c^\infty (B_1)$. This means that $g \in D(\Delta^{\beta}_{\Omega_2})$ for all $g \in C_c^\infty (B_2)$.

But by Step 2, $\partial\Omega_2 \cap B_2 \neq 0$, and in fact this intersection must have positive surface measure (since $\partial\Omega_2$ is closed and Lipschitz). It now follows that $\nabla g \cdot \nu = -\beta g$ $\sigma$-a.e. on $\partial\Omega_2 \cap B_2$ for all $g \in C_c^\infty (B_2)$, where $\nu$ is the outer unit normal to $\Omega_2$ and $\sigma$ the surface measure on $\partial\Omega_2$. This is a contradiction; hence the assumption that $\tau (\omega) \neq \Omega_1$ is false.
\end{proof}

Thus the assumptions (a), (b) and (c) together imply that there exists a copy of $\Omega_1$ in $\Omega_2$ such that $\tau$ maps $\Omega_1$ isometrically onto this copy in $\Omega_2$. However, these hypotheses by themselves are too weak to allow us to say much more, as the following example shows.

\begin{example}
\label{ex:neumann-ex-1}
Let $\Omega_1 := \{x \in \R^2: |x|<1\}$ and $\Omega_2 := \{x \in \R^2: |x|<2\}$, and set
\begin{displaymath}
	\tau (x):=\begin{cases} x \qquad &\text{if } |x|<1,\\ \frac{x}{|x|} \qquad &\text{if } |x| \geq 1.\end{cases}
\end{displaymath}
Now let $h : \overline{\Omega}_2 \to \R$ be any continuous function such that $h|_{\Omega_1} \equiv 1$. If we define $U:C(\overline{\Omega}_1) \to C(\overline{\Omega}_2)$ by $Uf(y) := h(y)f(\tau (y))$, then $U$ acts on $C_c^\infty (\Omega_1)$ by extension by zero to $\Omega_2 \setminus \Omega_1$ and thus satisfies conditions (b) (for any valid $\beta$) and (c). One may also check that it satisfies (a), since $\tau$ preserves disjoint supports. Here, we have $\omega = \Omega_1$, and $\tau$ is the identity on $\omega$. The set $\Omega_2'$ depends on the particular choice of $h$ and may thus be any open set which compactly contains $\Omega_1$.
\end{example}

In the sequel, we will thus strengthen assumption (c) to the following non-degeneracy condition:
\begin{enumerate}
\item[(d)] Let $\omega \subset \Omega_2$ be open. If $(Uf)|_\omega = 0$ for all $f \in C_c^\infty (\Omega_1)$, then $\omega=\emptyset$.
\end{enumerate}
This holds if, for example, for every $y \in \Omega_2$ there exists some $f \in C_c^\infty (\Omega_1)$ with $Uf(y) \neq 0$, or if $U$ is an invertible intertwining operator.

\begin{lemma}
\label{lem:neumann-form-d}
Under assumptions (a), (b) and (d), $\Omega_2' \cap \tau^{-1} (\Omega_1)$ is dense in $\Omega_2$.
\end{lemma}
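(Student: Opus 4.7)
The plan is to argue by contradiction: if $\Omega_2' \cap \tau^{-1}(\Omega_1)$ fails to be dense in $\Omega_2$, then there exists a nonempty open set $\omega \subset \Omega_2$ that is disjoint from $\Omega_2' \cap \tau^{-1}(\Omega_1)$. I will show that, on such an $\omega$, every test function $f \in C_c^\infty(\Omega_1)$ must satisfy $Uf|_\omega \equiv 0$, which directly contradicts the non-degeneracy assumption (d).

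The verification that $Uf$ vanishes on $\omega$ will split into two pointwise cases. Fix $y \in \omega$. If $y \notin \Omega_2'$, then by the piecewise formula \eqref{eq:neumann-form-1} from Lemma~\ref{lem:neumann-form-1} we have $Uf(y) = 0$ immediately, regardless of $f$. Otherwise $y \in \Omega_2'$ but $y \notin \tau^{-1}(\Omega_1)$, so $\tau(y) \in \overline{\Omega}_1 \setminus \Omega_1 = \partial\Omega_1$. Since every $f \in C_c^\infty(\Omega_1)$ has support compactly contained in $\Omega_1$, it must vanish on $\partial\Omega_1$, whence $f(\tau(y)) = 0$ and therefore $Uf(y) = h(y) f(\tau(y)) = 0$. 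Combining the two cases gives $Uf(y) = 0$ for every $y \in \omega$ and every $f \in C_c^\infty(\Omega_1)$.

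At this point assumption (d) applies: an open subset of $\Omega_2$ on which every image $Uf$ of a test function vanishes must itself be empty. This contradicts the choice of $\omega \neq \emptyset$, so the assumed failure of density cannot occur, completing the proof. No real obstacle is anticipated — the only subtlety is the split between $y \notin \Omega_2'$ and $y \in \Omega_2'$ with $\tau(y) \in \partial\Omega_1$, but both cases are handled straightforwardly from the definitions and the representation given by Lemma~\ref{lem:neumann-form-1}.
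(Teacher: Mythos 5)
Your proof is correct and is essentially identical to the paper's own argument: both take a nonempty open $\omega$ disjoint from $\Omega_2'\cap\tau^{-1}(\Omega_1)$, split into the cases $y\notin\Omega_2'$ and $y\in\Omega_2'$ with $\tau(y)\in\partial\Omega_1$ to conclude $Uf|_\omega=0$ for all $f\in C_c^\infty(\Omega_1)$, and then invoke assumption (d) for the contradiction.
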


\begin{proof}
Let $\omega \subset \Omega_2$ be an open set such that $\omega \cap \Omega_2' \cap \tau^{-1} (\Omega_1) = \emptyset$. Then, writing $Uf = h f \circ \tau$ as in \eqref{eq:neumann-form-1}, for $y \in \omega$ we have either $y \not\in \Omega_2'$ and thus $h(y)=0$, or $y \in \Omega_2' \setminus \tau^{-1}(\Omega_1)$ and thus $\tau(y) \in \partial\Omega_1$. In either case, if $f \in C_c^\infty(\Omega_1)$ is arbitrary, then we have $Uf(y)=0$. This contradicts (d).
\end{proof}

We can now give a characterisation of $U$ under these assumptions, which we repeat for ease of reference.

\begin{theorem}
\label{thm:disj-pres-neumann-char-c}
Assume $\Omega_1$ and $\Omega_2$ are bounded, connected Lipschitz domains in $\R^d$, $d\geq 2$, or bounded open intervals if $d=1$. Let $U: C(\overline{\Omega}_1) \to C(\overline{\Omega}_2)$ satisfy the assumptions
\begin{enumerate}
\item[(a)] $f \cdot g = 0$ implies $(Uf) \cdot (Ug) = 0$ for all $f,g \in C(\overline{\Omega})$; and
\item[(b)] $U(C_c^\infty (\Omega_1)) \subset D(\Delta^{\beta}_{\Omega_2})$ for some $\beta \in C(\partial\Omega_2)$, and $U(\Delta f)=\Delta (Uf)$ for all $f \in C_c^\infty(\Omega_1)$; and
\item[(c)] if $\omega \subset \Omega_2$ is open and $Uf|_\omega = 0$ for all $f \in C_c^\infty (\Omega_1)$, then $\omega=\emptyset$.
\end{enumerate}
Then there exist a constant $c \in \C \setminus \{0\}$, pairwise disjoint, connected open sets $\omega_1,\ldots,\omega_n \subset \Omega_2$ and isometries $\bar{\tau}_1,\ldots,\bar{\tau}_n : \R^d \to \R^d$, $n \geq 1$, such that $\bar{\tau}_i (\omega_i) = \Omega_1$ for all $i=1,\ldots,n$, $\bigcup_{i=1}^n \omega_i$ is dense in $\Omega_2$, and
\begin{displaymath}
	Uf(y) = cf(\bar{\tau}_i (y)) \qquad \text{for all $y \in \overline{\omega}_i$, for all } i=1,\ldots,n.
\end{displaymath}
\end{theorem}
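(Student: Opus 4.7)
The plan is to deduce the theorem from Lemmas~\ref{lem:neumann-form-1}, \ref{lem:neumann-form-2} and \ref{lem:neumann-form-d}, supplemented by a measure-theoretic finiteness argument and a connectedness argument to glue the locally defined constants. First, observe that assumption (c) of the theorem is exactly the non-degeneracy hypothesis that enters Lemma~\ref{lem:neumann-form-d} (and is stronger than what is needed for Lemma~\ref{lem:neumann-form-1}). Applying the three lemmas yields: $h := U1 \in C(\overline{\Omega}_2)$; a continuous map $\tau: \Omega_2' \to \overline{\Omega}_1$ with $Uf = h \cdot (f \circ \tau)$ on $\Omega_2'$ and $Uf \equiv 0$ outside $\Omega_2'$; the set $\Omega_2' \cap \tau^{-1}(\Omega_1)$ is dense in $\Omega_2$; and on each of its connected components $\omega$, $\tau|_\omega$ is the restriction of an isometry $\bar{\tau}: \R^d \to \R^d$ with $\bar{\tau}(\omega) = \Omega_1$, while $h|_\omega$ is constant and non-zero. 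Enumerate these components as $(\omega_i)_{i\in I}$, with associated isometries $\bar{\tau}_i$ and constants $c_i \in \C \setminus \{0\}$.

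Finiteness of $I$ is immediate: each $\omega_i$ satisfies $|\omega_i|=|\Omega_1|>0$ by isometric invariance of Lebesgue measure, the $\omega_i$ are pairwise disjoint subsets of $\Omega_2$, and $|\Omega_2|<\infty$, whence $|I|\le|\Omega_2|/|\Omega_1|<\infty$. Write $I=\{1,\ldots,n\}$; density of $\Omega_2'\cap\tau^{-1}(\Omega_1)$ forces $n\geq 1$.

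The main obstacle is to show that all constants $c_i$ coincide, and here the continuity of $h=U1$ on the connected set $\Omega_2$ is decisive. By continuity, $h\equiv c_i$ on the relative closure $\overline{\omega}_i\cap\Omega_2$, and since $\bigcup_i\omega_i$ is dense in $\Omega_2$, the finite union $\bigcup_{i=1}^n (\overline{\omega}_i\cap\Omega_2)$ equals $\Omega_2$. I then define a graph on $\{1,\ldots,n\}$ by joining $i$ and $j$ whenever $\overline{\omega}_i\cap\overline{\omega}_j\cap\Omega_2\neq\emptyset$. If this graph were disconnected, the resulting partition of indices would produce a splitting of $\Omega_2$ into two disjoint, nonempty, relatively closed sets, contradicting connectedness of $\Omega_2$. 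Hence the graph is connected, and for any edge $i\sim j$ any shared point $y_0$ satisfies $c_i=h(y_0)=c_j$; propagating along the graph yields a single constant $c\in\C\setminus\{0\}$.

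Finally, to extend the representation from $\omega_i$ to $\overline{\omega}_i$, note that $Uf(y)=cf(\bar{\tau}_i(y))$ holds pointwise on $\omega_i$ by Lemma~\ref{lem:neumann-form-1}. Both sides are continuous on $\overline{\omega}_i\subset\overline{\Omega}_2$: the left-hand side since $Uf\in C(\overline{\Omega}_2)$, and the right-hand side since $\bar{\tau}_i$ is continuous with $\bar{\tau}_i(\overline{\omega}_i)\subset\overline{\Omega}_1$, so that $f\circ\bar{\tau}_i\in C(\overline{\omega}_i)$ for each $f\in C(\overline{\Omega}_1)$. The identity therefore extends to $\overline{\omega}_i$ by continuity, completing the proof.
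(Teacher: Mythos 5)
Your proof is correct and follows essentially the same route as the paper: it reduces everything to Lemmas~\ref{lem:neumann-form-1}, \ref{lem:neumann-form-2} and~\ref{lem:neumann-form-d}, obtains finiteness from the pairwise-disjoint isometric copies of $\Omega_1$ inside the bounded set $\Omega_2$, and uses connectedness of $\Omega_2$ together with the continuity of $h=U1$ to force a single constant. Your graph-chaining argument for $c_1=\cdots=c_n$ is just a repackaging of the paper's observation that each level set $h^{-1}(\{c_i\})$ is open and closed in the connected set $\Omega_2$, so the two proofs are interchangeable.
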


We emphasise that the constant $c$ does not depend on $i=1,\ldots,n$.

\begin{proof}
The set $\Omega_2' \cap \tau^{-1} (\Omega_1)$ is open and, by Lemma~\ref{lem:neumann-form-d}, dense in $\Omega_2$. Now Lemma~\ref{lem:neumann-form-2} can be applied to each connected component $\omega$ of $\Omega_2' \cap \tau^{-1} (\Omega_1)$. In particular, since $\Omega_2$ is bounded and each component is congruent to $\Omega_1$, there can be only finitely many, call them $\omega_1,\ldots,\omega_n$. Together with Lemma~\ref{lem:neumann-form-1}, this also yields isometries $\bar{\tau}_i$ such that $\bar{\tau}_i (\omega_i) = \Omega_1$ and constants $c_i \neq 0$, $i=1,\ldots,n$, such that $Uf(y) = c_i f(\bar{\tau}_i (y))$ for all $y \in \omega_i$.

Since $\Omega_2' \cap \tau^{-1} (\Omega_1)$ is the disjoint union of $\omega_1,\ldots,\omega_n$ and is dense in $\Omega_2$, it follows that $h (\overline{\Omega}_2) \subset \{ c_1,\ldots,c_n\}$, where $h$ is as in Lemma~\ref{lem:neumann-form-1}. In particular, $\Omega_2' = \Omega_2$. Since $\Omega_2$ is connected and $h^{-1} (\{c_i\})$ is open and closed in $\Omega_2$ for each $i=1,\ldots,n$, it follows that $c_1 = \ldots = c_n =:c$.
\end{proof}

Example~\ref{ex:not-yet}(b) shows that $n\geq 2$ is possible in Theorem~\ref{thm:disj-pres-neumann-char-c}; in fact, by modifying the example via repeated even reflection to $\Omega_1 = (0,\pi)$, $\Omega_2 = (0,n\pi)$, i.e., so that $U$ has the form
\begin{displaymath}
	Uf(x) := \begin{cases} f(x-2k\pi) \qquad &\text{if } x \in (2k\pi, (2k+1)\pi],\\ f((2k+2)\pi-x) \qquad &\text{if } x \in ((2k+1)\pi,(2k+2)\pi),\end{cases}
\end{displaymath}
$k=0,\ldots,n/2-1$, we can construct an example for any $n\geq 1$ even, with an easy variant for $n$ odd. However, under certain additional assumptions on $\Omega_1$ and $\Omega_2$, or $U$, we can conclude that $n=1$.

\begin{theorem}
\label{thm:disj-pres-neumann-c-isom}
Suppose, in addition to the assumptions of Theorem~\ref{thm:disj-pres-neumann-char-c}, that one of the following conditions holds:
\begin{enumerate}
\item[(d)] $|\Omega_1|=|\Omega_2|$; or
\item[(e)] $U: C(\overline{\Omega}_1) \to C(\overline{\Omega}_2)$ has dense range.
\end{enumerate}
Then there exist an isometry $\tau: \R^d \to \R^d$ with $\tau(\Omega_2)=\Omega_1$ and a constant $c \in \C \setminus \{0\}$ such that
\begin{displaymath}
	Uf = cf\circ \tau \qquad \text{for all } f \in C(\overline{\Omega}_1).
\end{displaymath}
In particular, $\Omega_1$ and $\Omega_2$ are congruent.
\end{theorem}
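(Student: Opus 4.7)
The plan is to start from the structural result of Theorem~\ref{thm:disj-pres-neumann-char-c}: we already have pairwise disjoint connected open sets $\omega_1,\ldots,\omega_n \subset \Omega_2$, a single nonzero constant $c \in \C$, and global isometries $\bar{\tau}_1,\ldots,\bar{\tau}_n$ with $\bar{\tau}_i(\omega_i) = \Omega_1$, such that $\bigcup_{i=1}^n \omega_i$ is dense in $\Omega_2$ and $Uf(y) = cf(\bar{\tau}_i(y))$ on $\overline{\omega}_i$. The entire task is therefore to upgrade this conclusion, under either (d) or (e), to $n=1$ together with $\omega_1 = \Omega_2$; the isometry $\tau := \bar{\tau}_1$ and the formula $Uf = cf \circ \tau$ will then follow immediately.

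Under assumption (d), I would argue by a simple measure count: since each $\bar{\tau}_i$ is a rigid motion, $|\omega_i| = |\Omega_1|$ for every $i$, and disjointness gives $n |\Omega_1| = \sum_{i=1}^n |\omega_i| \leq |\Omega_2| = |\Omega_1|$, forcing $n=1$. Then $\omega_1 \subset \Omega_2$ satisfies $|\Omega_2 \setminus \omega_1| = 0$, and $\omega_1$, being congruent to the Lipschitz domain $\Omega_1$, is itself Lipschitz, hence regular in topology and therefore regular in measure. Lemma~\ref{lem:regularity}(b) then closes the case: $\omega_1 = \Omega_2$.

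Under assumption (e), I first rule out $n \geq 2$: pick any two distinct components $\omega_i, \omega_j$; since $\bar{\tau}_i(\omega_i) = \bar{\tau}_j(\omega_j) = \Omega_1$, one can choose $y_i \in \omega_i$ and $y_j \in \omega_j$ (necessarily $y_i \neq y_j$) with $\bar{\tau}_i(y_i) = \bar{\tau}_j(y_j)$, whence $(Uf)(y_i) = (Uf)(y_j)$ for every $f \in C(\overline{\Omega}_1)$. The range of $U$ then lies in the proper closed subspace $\{g \in C(\overline{\Omega}_2) : g(y_i) = g(y_j)\}$, contradicting density. Hence $n=1$ and $\omega_1$ is dense in $\Omega_2$. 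The formula $Uf = c f \circ \bar{\tau}_1$ then extends by continuity from $\omega_1$ to $\overline{\omega}_1 = \overline{\Omega}_2$. To conclude $\omega_1 = \Omega_2$, I observe that $\bar{\tau}_1(\Omega_2)$ is open (as the image of an open set under a homeomorphism of $\R^d$) and, since $\omega_1$ is dense in $\Omega_2$, it is contained in $\overline{\bar{\tau}_1(\omega_1)} = \overline{\Omega}_1$; by the Lipschitz hypothesis on $\Omega_1$, the interior of $\overline{\Omega}_1$ is $\Omega_1$ itself, so $\bar{\tau}_1(\Omega_2) \subset \Omega_1 = \bar{\tau}_1(\omega_1) \subset \bar{\tau}_1(\Omega_2)$, giving equality throughout and $\Omega_2 = \omega_1$.

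The routine obstacle will be case (e): establishing $n=1$ is immediate, but forcing $\omega_1 = \Omega_2$ requires the continuity extension of the formula to $\overline{\omega}_1$ together with the topological regularity of the Lipschitz domain $\Omega_1$ to pin down $\bar{\tau}_1(\Omega_2)$ as an open subset of $\overline{\Omega}_1$ that cannot strictly exceed $\Omega_1$. In both cases, with $n=1$ and $\omega_1 = \Omega_2$ in hand, the desired global conclusion $Uf = cf \circ \tau$ with $\tau := \bar{\tau}_1$ an isometry sending $\Omega_2$ onto $\Omega_1$ is immediate, and the congruence of $\Omega_1$ and $\Omega_2$ follows.
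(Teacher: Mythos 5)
Your proposal is correct and follows essentially the same route as the paper: the measure count for case (d), and for case (e) the observation that two components would force $g(y_1)=g(y_2)$ for all $g$ in the range, contradicting density. The only difference is that you explicitly carry out the final step $\omega_1=\Omega_2$ (via Lemma~\ref{lem:regularity} in case (d), and via density plus topological regularity of the Lipschitz domain $\Omega_1$ in case (e)), which the paper's proof leaves implicit after reducing to $n=1$; this is a welcome clarification rather than a deviation.
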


\begin{remark}
In contrast to Theorem~\ref{thm:disj-pres-dirichlet-c0-isom}(b), to conclude that $n=1$ it is not enough that, say, the Neumann Laplacians on $L^2(\Omega_1)$ and $L^2(\Omega_2)$ share an eigenvalue: $0$ is always an eigenvalue, with eigenfunction $1$, on any bounded, connected open set.
\end{remark}

\begin{proof}[Proof of Theorem~\ref{thm:disj-pres-neumann-c-isom}]
We take the assumptions and setup of Theorem~\ref{thm:disj-pres-neumann-char-c}: we merely have to show that $n=1$. In case (d) this is obvious.

So assume (e), and assume that $n\geq 2$. Let $y_1 \in \omega_1$ be arbitrary; then $y_2 := \bar{\tau}_2^{-1} \circ \bar{\tau}_1 (y_1) \in \omega_2$, and so $y_1 \neq y_2$. However, for all $g \in U( C(\overline{\Omega}_1))$ we have $g(y_1)=g(y_2)$; to see this, let $f \in C(\overline{\Omega}_1)$ be such that $g = Uf$. Then $g(y_1) = cf(\bar{\tau}_1(y_1)) = cf(\bar{\tau}_2(y_2)) = g(y_2)$. Thus the range of $U$ is not dense.
\end{proof}

We next give another result where the main assumption concerns the regularity of $\Omega_1$.

\begin{theorem}
\label{thm:disj-pres-neumann-c-c1-isom}
Suppose in addition to the assumptions of Theorem~\ref{thm:disj-pres-neumann-char-c} that $d \geq 2$ and $\Omega_1$ is of class $C^1$. Then there exist an isometry $\tau: \R^d \to \R^d$ with $\tau(\Omega_2)=\Omega_1$ and a constant $c \in \C \setminus \{0\}$ such that
\begin{displaymath}
	Uf = cf\circ \tau \qquad \text{for all } f \in C(\overline{\Omega}_1).
\end{displaymath}
In particular, $\Omega_1$ and $\Omega_2$ are congruent.
\end{theorem}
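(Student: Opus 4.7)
We take the setup and notation from Theorem~\ref{thm:disj-pres-neumann-char-c}: pairwise disjoint connected open sets $\omega_1,\ldots,\omega_n \subset \Omega_2$, each isometric to $\Omega_1$ via an isometry $\bar{\tau}_i$, with $\bigcup_i \omega_i$ dense in $\Omega_2$. The plan is to assume for contradiction $n\geq 2$ and show that $\partial\Omega_2$ must contain a cusp, violating its Lipschitz regularity. Set
\begin{displaymath}
	G := \bigcup_{i=1}^n \partial\omega_i \cap \Omega_2,
\end{displaymath}
the "internal" boundary. Since the $\omega_i$ are disjoint and their union is dense, $G \neq \emptyset$ when $n \geq 2$.

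First I would rule out pathological configurations. If every $\omega_i$ satisfied $\partial\omega_i \subset \Omega_2$ (i.e., each $\omega_i$ were a "floating island" separated from $\partial\Omega_2$), then $\bigcup_i \overline{\omega}_i$ would be a finite union of compact subsets of $\Omega_2$, hence at positive distance from $\partial\Omega_2$, contradicting density. So some $\omega_i$, say $\omega_1$, satisfies $\partial\omega_1 \cap \partial\Omega_2 \neq \emptyset$. If $\partial\omega_1 \subset \partial\Omega_2$, then $\omega_1$ is both open and closed in $\Omega_2$, hence equals $\Omega_2$ by connectedness, forcing $n=1$. Thus $\partial\omega_1$ meets both $\partial\Omega_2$ and $\Omega_2$ nontrivially. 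Working on an appropriate component of $\partial\omega_1$ (which, being isometric to a component of $\partial\Omega_1$, is a $C^1$ manifold without boundary), I would pick a frontier point $y_* \in \partial\omega_1 \cap \partial\Omega_2$ lying in $\overline{\partial\omega_1 \cap \Omega_2}$, so that $y_* \in \overline{G} \cap \partial\Omega_2$.

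Next I would study the local geometry at $y_*$. On the side of $G$ opposite $\omega_1$ near $y_*$, density of $\bigcup \omega_j$ and the $C^1$ nature of $\partial\omega_1$ force some single $\omega_j$, say $\omega_2$, to fill a whole one-sided neighborhood; continuity then gives $y_* \in \partial\omega_2$, and near $y_*$ the surfaces $\partial\omega_1$ and $\partial\omega_2$ share the $G$-piece locally. Because $\Omega_1$ is $C^1$, both $\partial\omega_1$ and $\partial\omega_2$ are $C^1$ hypersurfaces through $y_*$, and since their one-sided (inside-$\Omega_2$) parts agree, they must have the same tangent hyperplane $H$ at $y_*$. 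Within $H$ the tangent $(d-2)$-subspace $L$ to $\overline{G} \cap \partial\Omega_2$ at $y_*$ divides $H$ into two half-hyperplanes $H^+$ (tangent to $G$) and $H^-$ (tangent to the $\partial\Omega_2$-pieces). Thus $\partial\omega_1 \cap \partial\Omega_2$ and $\partial\omega_2 \cap \partial\Omega_2$ are two distinct $C^1$ pieces of $\partial\Omega_2$ emanating from $y_*$ with the same tangent half-hyperplane $H^-$.

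Finally, the cusp argument. Writing the two pieces locally as graphs $x_d = f_j(x_1,\ldots,x_{d-1})$ with $f_j(y_*)=0$ and $\nabla f_j(y_*)=0$ on the $H^-$-side of $L$, they are distinct (since $\omega_1 \cap \omega_2 = \emptyset$) but tangent to each other at $y_*$. Their union bounds a cusp-shaped region of $\R^d \setminus \Omega_2$ pinching to zero width at $y_*$, so $\partial\Omega_2$ is not locally the graph of a Lipschitz function over any hyperplane at $y_*$. This contradicts the Lipschitz hypothesis on $\partial\Omega_2$, forcing $n=1$; the conclusion then follows from Theorem~\ref{thm:disj-pres-neumann-char-c}. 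The main obstacles I anticipate are verifying the clean "only $\omega_1,\omega_2$ meet at $y_*$" picture when $\partial\Omega_1$ is disconnected or when three or more $\omega_i$ accumulate at $y_*$, and making the cusp/Lipschitz incompatibility rigorous through projection onto an arbitrary hyperplane $P$ — the latter reducing to the fact that the tangent cone of $\partial\Omega_2$ at $y_*$ equals the half-hyperplane $H^-$, whereas a Lipschitz $(d-1)$-manifold must have tangent cone of full $(d-1)$-dimensional "spread".
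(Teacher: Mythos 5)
Your proposal is correct and follows essentially the same strategy as the paper: reduce the problem to showing that a bounded Lipschitz domain cannot have its closure equal to a finite union of closures of pairwise disjoint isometric $C^1$ open sets (the paper's Proposition~\ref{prop:c1-union}), locate a point of $\partial\Omega_2$ at which two of the copies $\partial\omega_i$ meet tangentially (tangentially because a transversal crossing of the two $C^1$ hypersurfaces would force $\omega_1\cap\omega_2\neq\emptyset$), and conclude that $\R^d\setminus\Omega_2$ has a cusp there, violating the outer cone condition implied by the Lipschitz hypothesis. The only real difference is how the contact point is produced --- the paper's Lemma~\ref{lem:disj-pres-neumann-top} gets a point of $\partial\omega_1\cap\partial\omega_2$ on the outermost connected component $\Gamma$ of $\partial\Omega_2$ by a connectedness argument ($\Gamma$ is connected and covered by the two closed sets $\partial\omega_1$ and $\partial\omega_2$, each meeting $\Gamma$), whereas you take a frontier point of the internal interface --- and the paper's cusp step is carried out at essentially the same level of informality as yours.
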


We recall that all the known counterexamples to Kac' original conjecture are Lipschitz but not $C^1$, being formed by repeated reflection of a given building block, cf.~the introduction; and it seems reasonable to expect Kac' conjecture to hold for $C^1$ domains, cf.~the discussion in \cite{arendt14}. Although the setting here is somewhat different, in Theorem~\ref{thm:disj-pres-neumann-char-c} we can, in the same way, have multiple identical copies of a domain glued together; Theorem~\ref{thm:disj-pres-neumann-c-c1-isom} shows that this is indeed only possible if corners and thus reflections are allowed.

The proof of Theorem~\ref{thm:disj-pres-neumann-c-c1-isom} is a direct consequence of the following statement.

\begin{proposition}
\label{prop:c1-union}
Let $\omega_1,\ldots,\omega_n$ be pairwise disjoint bounded isometric open sets in $\R^d$, $d\geq 2$, of class $C^1$, and let $\Omega \subset \R^d$ be open. Assume that
\begin{displaymath}
	\overline\Omega = \bigcup_{i=1}^n\overline{\omega}_i.
\end{displaymath}
Then $\Omega$ is not a Lipschitz domain.
\end{proposition}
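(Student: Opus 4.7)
The plan is by contradiction. Suppose $\Omega$ is a Lipschitz domain; in particular it is connected and, being Lipschitz, regular in topology, so that $\Omega = \interior\overline\Omega = \interior\bigcup_{i=1}^n \overline\omega_i$. Each $\omega_i$ is bounded $C^1$ and hence regular in topology, so $\omega_i \subset \Omega$. If all the closures $\overline\omega_i$ were pairwise disjoint, each $\omega_i = \overline\omega_i \cap \Omega$ would be both open and relatively closed in $\Omega$, yielding $\Omega = \bigsqcup_i \omega_i$, contradicting connectedness for $n \geq 2$. Hence, after relabelling, there exists $z \in \partial\omega_1 \cap \partial\omega_2$. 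Disjointness of $\omega_1,\omega_2$ together with their $C^1$ graph descriptions near $z$ forces their tangent hyperplanes at $z$ to coincide: otherwise the half-spaces locally approximating $\omega_1$ and $\omega_2$ would meet in a nontrivial sector, producing $\omega_1 \cap \omega_2 \neq \emptyset$. Moreover $\omega_1$ and $\omega_2$ must lie on opposite sides of this common tangent hyperplane.

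Next I choose coordinates so that $z = 0$ and the common tangent is $\{y = 0\}$ (with $x \in \R^{d-1}$), and write $\omega_1 = \{y > f_1(x)\}$, $\omega_2 = \{y < f_2(x)\}$ locally on a ball $B \subset \R^{d-1}$, with $f_i \in C^1(B)$, $f_i(0) = 0$, $\nabla f_i(0) = 0$, and $f_1 \geq f_2$ on $B$. Set $K := \{x \in B : f_1(x) = f_2(x)\}$. If $0 \in \interior K$, I slide $z$ to a point on the relative boundary of the global coincidence locus $\partial\omega_1 \cap \partial\omega_2$, using that two distinct bounded $C^1$ open sets cannot share their entire boundary (since $\interior \overline\omega_i = \omega_i$ together with disjointness would then force $\omega_1 = \omega_2$). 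After this reduction there exist $x^{(m)} \to 0$ with $f_1(x^{(m)}) > f_2(x^{(m)})$; and since $0$ is a local minimum of the nonnegative function $f_1 - f_2$ with vanishing gradient, the gap $G := \{(x,y) : f_2(x) < y < f_1(x)\}$ has width $o(|x|)$ near the origin, i.e.\ a genuine cusp.

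It remains to show that $(0,0) \in \partial\Omega$ and draw the contradiction. Any other $\omega_k$ whose closure contains $(0,0)$ would, by disjointness from $\omega_1 \cup \omega_2$, be locally contained in $G$; but a bounded $C^1$ open set has at every boundary point a well-defined tangent hyperplane and hence an interior cone of positive opening angle, which cannot be fitted inside a cusp of vanishing opening angle. Therefore $(0,0) \notin \interior\overline\Omega = \Omega$, while $(0,0) \in \overline\omega_1 \subset \overline\Omega$, so $(0,0) \in \partial\Omega$. In every neighborhood of $(0,0)$ the set $\Omega$ contains both $\omega_1$-points above $y = f_1(x)$ and $\omega_2$-points below $y = f_2(x)$; thus $\Omega$ sits on both sides of every hyperplane through $(0,0)$, and $\partial\Omega$ cannot be written locally as the graph of a Lipschitz function in any rotated coordinate system. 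This contradicts the Lipschitz hypothesis. The main obstacle I anticipate is making the cone-versus-cusp argument quantitative: one must derive from the $C^1$ regularity of $\partial\omega_k$ an explicit interior cone inside $\omega_k$ of positive opening angle at any prospective boundary point approaching $(0,0)$, and then rule it out against the sub-linear bound $f_1(x) - f_2(x) = o(|x|)$ on the width of the gap.
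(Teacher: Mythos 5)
Your broad strategy matches the paper's --- locate a point of $\partial\omega_1\cap\partial\omega_2$ lying on $\partial\Omega$, observe that the two $C^1$ boundaries must be tangent there, and conclude that the complement of $\Omega$ is squeezed into a cusp, contradicting the Lipschitz condition --- but there is a genuine gap at the step where you produce such a point. Your argument nowhere uses the hypothesis that the $\omega_i$ are \emph{isometric} to one another, and without that hypothesis the proposition is false: take $\omega_1=B(0,1)$ and $\omega_2=B(0,2)\setminus\overline{B(0,1)}$, which are disjoint, bounded, $C^1$, and satisfy $\overline{\omega}_1\cup\overline{\omega}_2=\overline{B(0,2)}$, while $\Omega=B(0,2)$ is smooth. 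In this configuration the coincidence locus $\partial\omega_1\cap\partial\omega_2=S(0,1)$ is relatively clopen in both $\partial\omega_1$ and $\partial\omega_2$, so it has empty relative boundary and your sliding step has nowhere to go; note also that the two sets do \emph{not} share their entire boundary, so your stated justification (``two distinct bounded $C^1$ open sets cannot share their entire boundary'') does not exclude this situation. What must be ruled out is precisely the nested configuration in which $\partial\omega_1\cap\partial\omega_2$ is entirely contained in $\Omega$, and this is where congruence has to enter. The paper does this in Lemma~\ref{lem:disj-pres-neumann-top}: it takes the outermost connected component $\Gamma$ of $\partial\Omega$ (the one with $\diam\Gamma=\diam\Omega$), shows that \emph{both} $\partial\omega_1$ and $\partial\omega_2$ must meet $\Gamma$ --- otherwise one of the sets would be compactly contained in the region bounded by $\Gamma$ and would have strictly smaller diameter than the other, contradicting isometry --- and then uses connectedness of $\Gamma$ together with $\Gamma\subset\partial\omega_1\cup\partial\omega_2$ to find a point of $\partial\omega_1\cap\partial\omega_2\cap\partial\Omega$ directly. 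Some such global argument needs to be added to your proof.

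A smaller point: your final inference, that $\Omega$ lying on both sides of every hyperplane through $(0,0)$ prevents $\partial\Omega$ from being a local Lipschitz graph, is not valid as stated (the domain $\{(x,y):y>-|x|\}$ lies on both sides of every line through the origin and is Lipschitz). The correct conclusion from your setup concerns the \emph{complement}: near $(0,0)$ one has $\R^d\setminus\Omega\subset\{(x,y):f_2(x)\le y\le f_1(x)\}$, a region of width $o(|x|)$, so $\R^d\setminus\Omega$ contains no truncated cone of positive opening with vertex $(0,0)$, whereas a Lipschitz domain satisfies an exterior cone condition at every boundary point. This is exactly how the paper concludes, and your cone-versus-cusp discussion of the remaining $\omega_k$ (which is fine, modulo the uniform interior cone property of bounded $C^1$ sets that you already flag as needing quantification) supplies everything required for it.
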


For simplicity, we will give the proof in the special case that $n=2$. We first need the following lemma.

\begin{lemma}
\label{lem:disj-pres-neumann-top}
With the assumptions and notation of Proposition~\ref{prop:c1-union} (and assuming that $n=2$) we have
\begin{enumerate}
\item $\partial \omega_1 \cap \Omega_2 = \partial \omega_2 \cap \Omega$;
\item $\partial \omega_1 \cap \partial \omega_2 \neq \emptyset$;
\item there exists a connected component $\Gamma$ of $\partial\Omega$ such that $\partial \omega_1 \cap \Gamma \neq \emptyset$ and $\partial \omega_2 \cap \Gamma \neq \emptyset$;
\item there exists a point $z \in \partial\omega_1 \cap \partial\omega_2 \cap \Gamma$, where $\Gamma$ is the connected component of $\partial\Omega$ from (3).
\end{enumerate}
\end{lemma}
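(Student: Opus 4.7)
The lemma presents four claims of increasing subtlety; I would attack them in order, treating (4) as a direct corollary of (3) plus connectedness of $\Gamma$, with the main work going into (3). (I also note that the right--hand side of (1) should read $\partial\omega_2 \cap \Omega$, matching the left.)

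For (1), the argument is purely topological: given $z \in \partial\omega_1 \cap \Omega$, the hypothesis $\overline{\Omega} = \overline{\omega}_1 \cup \overline{\omega}_2$ gives $z \in \overline{\omega}_1 \cup \overline{\omega}_2$. Openness of $\omega_1$ rules out $z \in \omega_1$; openness of $\omega_2$ together with disjointness from $\omega_1$ rules out $z \in \omega_2$, since every neighbourhood of $z$ meets $\omega_1$. Hence $z \in \partial\omega_2$, and symmetry gives the equality. For (2), the same openness--disjointness reasoning yields $\omega_i \cap \partial\omega_j = \emptyset$ for $i \neq j$, so $\overline{\omega}_1 \cap \overline{\omega}_2 = \partial\omega_1 \cap \partial\omega_2$. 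If this intersection were empty, $\overline{\Omega}$ would be the disjoint union of two nonempty closed sets, contradicting the connectedness of $\overline{\Omega}$ (inherited from the connected domain $\Omega_2$ in the setting of Theorem~\ref{thm:disj-pres-neumann-c-c1-isom}).

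For (3), I would argue by contradiction. First observe $\partial\Omega \subset \partial\omega_1 \cup \partial\omega_2$, since $\partial\Omega \subset \overline{\Omega} \subset \overline{\omega}_1 \cup \overline{\omega}_2$ and $\partial\Omega$ is disjoint from the open sets $\omega_i \subset \Omega$. Suppose no component of $\partial\Omega$ meets both $\partial\omega_1$ and $\partial\omega_2$. Then, since $\partial\omega_i \cap \partial\Omega$ is closed in $\partial\Omega$, each component of $\partial\Omega$ lies entirely in one of these closed sets, and consequently $\partial\omega_1 \cap \partial\omega_2 \cap \partial\Omega = \emptyset$. Combined with (2), this forces $K := \partial\omega_1 \cap \partial\omega_2 \subset \Omega$, so $K$ is compact in $\Omega$; moreover $K$ is clopen in $\partial\omega_1$ (open because $\Omega$ is open, closed because its complement $\partial\omega_1 \cap \partial\Omega$ in $\partial\omega_1$ is closed), hence a union of entire connected components of the compact $C^1$ hypersurface $\partial\omega_1$, and symmetrically of $\partial\omega_2$. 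One then checks that $\Omega \setminus K = \omega_1 \sqcup \omega_2$, so $K$ is a codimension-one interior interface across which $\omega_1$ and $\omega_2$ abut, sharing entire boundary components. The main obstacle is to rule out this degenerate configuration; my plan is to exploit the isometric equivalence of $\omega_1, \omega_2$ by letting $\phi$ be the isometry with $\phi(\omega_1) = \omega_2$ and tracking how $\phi$ must act on the ``external'' components of $\partial\omega_1 \cap \partial\Omega$ versus those of $\partial\omega_2 \cap \partial\Omega$. Since these two sets of components are disjoint (both from each other and from $K$), while the shared interior components $K$ are mapped by $\phi$ into $\partial\omega_2$, one obtains a geometric incompatibility with the disjointness of $\omega_1$ and $\omega_2$.

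Finally, (4) is immediate: with $\Gamma$ the component of $\partial\Omega$ from (3), the sets $\Gamma \cap \partial\omega_1$ and $\Gamma \cap \partial\omega_2$ are both nonempty and closed in $\Gamma$, and their union equals $\Gamma$ by the inclusion $\partial\Omega \subset \partial\omega_1 \cup \partial\omega_2$. Connectedness of $\Gamma$ forces them to meet, yielding the desired point $z \in \partial\omega_1 \cap \partial\omega_2 \cap \Gamma$.
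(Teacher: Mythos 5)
There are two genuine gaps. First, your argument for (1) does not go through as written: from $z \in \partial\omega_1 \cap \Omega$ you rule out $z \in \omega_1$ and $z \in \omega_2$ and then conclude $z \in \partial\omega_2$, but the inclusion $z \in \overline{\omega}_1 \cup \overline{\omega}_2$ is vacuous here (it already holds because $z \in \overline{\omega}_1$), so nothing in what you wrote forces $z \in \overline{\omega}_2$. The missing step is to suppose $z \notin \overline{\omega}_2$, choose $\varepsilon>0$ with $B(z,\varepsilon) \subset \Omega \setminus \overline{\omega}_2$, and use the topological regularity of the $C^1$ set $\omega_1$ to see that $B(z,\varepsilon) \setminus \overline{\omega}_1$ is a non-empty open subset of $\Omega$ disjoint from $\overline{\omega}_1 \cup \overline{\omega}_2$, contradicting $\overline{\Omega} = \overline{\omega}_1 \cup \overline{\omega}_2$. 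That is where the hypothesis actually enters, and it is what the paper does.

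Second, and more seriously, your treatment of (3) is not a proof: after reducing to the configuration in which $K = \partial\omega_1 \cap \partial\omega_2$ is a union of full boundary components lying in $\Omega$, you announce a ``plan'' to derive a ``geometric incompatibility'' from the isometry carrying $\omega_1$ to $\omega_2$, without carrying it out. Ruling out this nested configuration (one copy sitting inside a hole of the other) is the entire substance of (3). The paper does it concretely: let $\Gamma$ be the outermost component of $\partial\Omega$, i.e.\ the one with $\diam\Gamma = \diam\partial\Omega = \diam\Omega$; at least one of $\partial\omega_1,\partial\omega_2$ meets $\Gamma$, and if only one does, say $\partial\omega_2$, then $\Gamma \subset \partial\omega_2$ forces $\diam\omega_2 = \diam\Omega$, while $\overline{\omega}_1$ is then trapped inside the bounded region enclosed by $\Gamma$, so $\diam\omega_1 < \diam\Omega$, contradicting the fact that $\omega_1$ and $\omega_2$ are isometric. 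Your parts (2) and (4) are fine: (2) via connectedness of $\overline{\Omega}$ is a valid minor variant of the paper's route (which instead shows $\partial\omega_1 \cap \Omega \neq \emptyset$ from connectedness of $\Omega$ and then applies (1)), and (4) coincides with the paper's argument.
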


If $n>2$, then (1) may be modified to read $z \in \Omega \cap \bigcup_{i=1}^n \partial\omega_i$ if and only if $z \in \partial\omega_i$ for \emph{at least two} $i$ (which may depend on $z$); and there still \emph{exist} $\omega_1$ and $\omega_2$ such that (2), (3) and (4) all hold for this pair.

\begin{proof}
(1) Suppose for a contradiction that $z \in (\partial\omega_1 \cap \Omega) \setminus \partial\omega_2$. Since $\omega_1$ and $\omega_2$ are open and disjoint, also $\partial\omega_1 \cap \omega_2 = \emptyset$ and thus $z \in (\partial\omega_1 \cap \Omega) \setminus \overline{\omega}_2$. Choose $\varepsilon>0$ such that $B(z,\varepsilon) \subset \Omega \setminus \overline{\omega}_2$ and set $V:=B(z,\varepsilon) \cap (\Omega \setminus \overline{\omega}_1)$; then $V \neq \emptyset$ since $z \in \partial\omega_1$. Thus $V$ is an open and non-empty subset of $\Omega$, and $V \cap (\overline{\omega}_1 \cup \overline{\omega}_2) = \emptyset$, contradicting $\Omega \subset \overline{\omega}_1 \cup \overline{\omega}_2$.

(2) By (1), it suffices to prove that $\partial\omega_1 \cap \Omega \neq \emptyset$. But if $\partial\omega_1 \cap \Omega = \emptyset$ and $\omega_1 \subset \Omega$, then $\omega_1 = \Omega$ since the latter is connected, a contradiction to $\emptyset \neq \omega_2 \subset \Omega$.

(3) We let $\Gamma$ be the outermost component of $\partial\Omega$, that is, denoting by $\diam V = \sup\{ \dist(x,y) : x,y\in V\}$ the diameter of an arbitrary set $V \subset \R^d$, we set $\Gamma$ to be the connected component of $\partial\Omega$ for which $\diam \Gamma = \diam \Omega = \diam \partial\Omega$. Now since $\omega_1 \cup \omega_2$ is dense in $\Omega$ by Theorem~\ref{thm:disj-pres-neumann-char-c} (and the assumption that $n=2$), we have $\overline{\omega}_1 \cup \overline{\omega}_2 = \overline{\Omega}$ and hence at least one of $\partial\omega_1$ and $\partial\omega_2$ has non-empty intersection with $\Gamma$; say $\partial\omega_2 \cap \Gamma \neq \emptyset$.

Suppose now that $\partial\omega_1 \cap \Gamma = \emptyset$. This means, firstly, that $\Gamma \subset \partial\omega_2$, and so $\diam \omega_2 = \diam \Omega$. But it also forces $\overline{\omega}_1$ to be compactly contained in the unique bounded open set whose boundary is $\Gamma$, and hence $\diam \omega_1 < \diam \Omega$.
Hence $\omega_1$ and $\omega_2$ cannot be congruent, a contradiction since they are isometric.

(4) The set $\Gamma$ is closed and connected and, since $\overline{\omega}_1 \cup \overline{\omega}_2 = \overline{\Omega}$, we have in particular that $\Gamma \subset \partial\omega_1 \cup \partial\omega_2$. Since $\partial\omega_1$ and $\partial\omega_2$ are also closed sets, we must have $\partial\omega_1 \cap \partial\omega_2 \cap \Gamma \neq \emptyset$.
\end{proof}

\begin{proof}[Proof of Proposition~\ref{prop:c1-union} and hence of Theorem~\ref{thm:disj-pres-neumann-c-c1-isom}]
Let $z \in \partial\omega_1 \cap \partial\omega_2 \cap \partial\Omega$. Then $\partial\omega_1$ and $\partial\omega_2$ are locally $C^1$ manifolds passing through $z$ which cannot cross transversally (since otherwise $\omega_1$ and $\omega_2$ would have non-empty intersection); more precisely, fixing $\varepsilon>0$ sufficiently small and setting $M_j:=B(z,\varepsilon) \cap \partial\omega_j$, $j=1,2$, which are $C^1$-manifolds inside $B(z,\varepsilon)$, for the tangent spaces $T_z M_j$ at $z$ we have $T_z M_1 = T_z M_2$. We claim that $\partial\Omega$ cannot be Lipschitz at $z$: indeed, $\partial\Omega \cap B(z,\varepsilon) = \partial (\R^d \setminus \Omega_2) \cap B(z,\varepsilon)$, and $(\R^d \setminus \Omega_2) \cap B(z,\varepsilon)$ has a singularity (cusp) at $z$, being contained in $B(z,\varepsilon) \setminus (\omega_1 \cup \omega_2)$ and having $z \in \partial\Omega$ as a boundary point. Thus $\partial\Omega$ cannot be Lipschitz at $z$, as it does not satisfy the outer cone condition there.
\end{proof}

We conclude this section by giving a positive result for the Neumann and Robin Laplacians on $C(\overline{\Omega})$ under the assumption that there is a disjointness-preserving invertible intertwining operator, analogous to Corollary~\ref{cor:dirichlet-c0-kac-disj} (cf.~also Proposition~\ref{prop:unitary} and the discussion around it).

\begin{corollary}
\label{cor:neumann-c0-kac-disj}
Suppose $\Omega_1 \subset \R^{d_1}$ and $\Omega_2 \subset \R^{d_2}$ are two bounded, open sets with Lipschitz boundary, such that $\Omega_1$ is connected, and $\beta_1 \in C(\partial\Omega_1)$, $\beta_2 \in C(\partial\Omega_2)$. Suppose also that $U:C(\overline{\Omega}_1) \to C(\overline{\Omega}_2)$ is an invertible disjointness-preserving operator intertwining the Robin Laplacians $-\Delta_{\Omega_1}^{\beta_1}$ on $C(\overline{\Omega}_1)$ and $-\Delta_{\Omega_2}^{\beta_2}$ on $C(\overline{\Omega}_2)$ in the sense of Definition~\ref{def:intertwining}. Then $d_1 = d_2 =: d$, there exist an isometry $\tau: \R^d \to \R^d$ with $\tau (\Omega_2) = \Omega_1$ and a constant $c \in \C \setminus \{0\}$ such that
\begin{displaymath}
	Uf = cf\circ \tau \qquad \text{for all } f \in C(\overline{\Omega}_1),
\end{displaymath}
and $\beta_2 = \beta_1 \circ \tau|_{\partial\Omega_2}$.
\end{corollary}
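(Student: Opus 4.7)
The plan is to reduce $U$ to the form $cf\circ\tau$ by combining a dimension-matching argument (via Weyl's law and the invertibility of $U$, as in the proof of Corollary~\ref{cor:dirichlet-c0-kac-disj}) with Theorems~\ref{thm:disj-pres-neumann-char-c} and~\ref{thm:disj-pres-neumann-c-isom}, and then to read off the identity $\beta_2=\beta_1\circ\tau|_{\partial\Omega_2}$ by transporting the Robin boundary condition under $\tau$.

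First I would establish $d_1=d_2$. Since $U$ is invertible and intertwines $-\Delta^{\beta_1}_{\Omega_1}$ and $-\Delta^{\beta_2}_{\Omega_2}$ on $C(\overline{\Omega}_i)$, the proof of (3)$\Rightarrow$(1) in Proposition~\ref{prop:unitary} applies verbatim in this $C(\overline{\Omega})$-setting and shows that the two Robin $C$-spectra coincide with equal multiplicities. As noted after Definition~\ref{def:neumann-laplacian-c}, for bounded Lipschitz $\Omega_i$ these spectra agree with the corresponding $L^2$-spectra. Weyl's law for the Robin Laplacian on bounded Lipschitz domains has the same leading asymptotics as~\eqref{eq:weyl}, and comparing the leading coefficients exactly as in the proof of Corollary~\ref{cor:dirichlet-c0-kac-disj} forces $d_1=d_2=:d$.

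Next I would verify the hypotheses of Theorem~\ref{thm:disj-pres-neumann-char-c} for $U$ with $\beta:=\beta_2$. Condition (a) is the standing assumption. For (b), every $f\in C_c^\infty(\Omega_1)$ vanishes in a neighbourhood of $\partial\Omega_1$ and hence lies in $D(-\Delta^{\beta_1}_{\Omega_1})$ (the Robin condition being trivially satisfied), so Definition~\ref{def:intertwining} gives $Uf\in D(-\Delta^{\beta_2}_{\Omega_2})$ and $U(\Delta f)=\Delta(Uf)$. Condition (c) is automatic for an invertible intertwining operator, as remarked immediately before Theorem~\ref{thm:disj-pres-neumann-char-c}. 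Because $U$ is bijective it has dense range, so Theorem~\ref{thm:disj-pres-neumann-c-isom}(e) forces $n=1$; this produces a single isometry $\tau:\R^d\to\R^d$ with $\tau(\Omega_2)=\Omega_1$ and a constant $c\in\C\setminus\{0\}$ such that $Uf=cf\circ\tau$ on all of $C(\overline{\Omega}_1)$.

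Finally, for any $f\in D(-\Delta^{\beta_1}_{\Omega_1})$ the intertwining gives $Uf=cf\circ\tau\in D(-\Delta^{\beta_2}_{\Omega_2})$. Writing $\tau(x)=Ax+b$ with $A\in O(d)$, the constant orthogonal matrix $A$ carries $\nu_2(y)$ to $\nu_1(\tau(y))$ for $y\in\partial\Omega_2$, so the chain rule yields $\partial_{\nu_2}(f\circ\tau)(y)=(\partial_{\nu_1}f)(\tau(y))$ in the $L^2(\partial\Omega_2)$-sense of Definition~\ref{def:neumann-laplacian-l2}(a). Combining with the Robin identities $\partial_{\nu_2}(Uf)=-\beta_2 Uf$ and $\partial_{\nu_1}f=-\beta_1 f$ gives $(\beta_2(y)-\beta_1(\tau(y)))\cdot f(\tau(y))=0$ for $\sigma$-a.e., hence (by continuity of $\beta_i$ and $f$) every $y\in\partial\Omega_2$. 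Since $D(-\Delta^{\beta_1}_{\Omega_1})$ is dense in $C(\overline{\Omega}_1)$ (it is the domain of the generator of a $C_0$-semigroup), one can arrange $f(\tau(y_0))\neq 0$ for any prescribed $y_0\in\partial\Omega_2$, forcing $\beta_2=\beta_1\circ\tau|_{\partial\Omega_2}$. I expect the most delicate point to be this last step, because the normal derivatives in Definition~\ref{def:neumann-laplacian-c} are only defined in an $L^2$-sense on the boundary; however, the fact that $\tau$ is an isometry preserves the $L^2(\partial\Omega_i)$-structure, so transporting the Robin identity ultimately reduces to a trace/chain-rule computation rather than a genuinely conceptual difficulty.
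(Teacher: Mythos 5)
Your proof is correct and follows essentially the same route as the paper: Weyl asymptotics to force $d_1=d_2$, the structure results of Section~\ref{sec:neumann} to put $U$ in the form $cf\circ\tau$, and transport of the Robin boundary condition under the isometry $\tau$ to identify $\beta_2=\beta_1\circ\tau|_{\partial\Omega_2}$. The only (immaterial) difference is that you reduce to a single copy $\omega=\Omega_2$ via the dense-range criterion of Theorem~\ref{thm:disj-pres-neumann-c-isom}(e), whereas the paper extracts $|\Omega_1|=|\Omega_2|$ from the Weyl formula and concludes $\omega=\Omega_2$ from the volume equality and the Lipschitz regularity of the boundaries.
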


\begin{proof}
First observe that $\Delta^{\beta_1}_{\Omega_1}$ and $\Delta^{\beta_2}_{\Omega_2}$ are isospectral, as Proposition~\ref{prop:unitary} shows. Now, as in the proof of Corollary~\ref{cor:dirichlet-c0-kac-disj}, the Weyl asymptotic formula \eqref{eq:weyl}, also valid for the Neumann and Robin Laplacians (see \cite{ivrii80} or \cite{safarov97}, or cf.~\cite[Eq.~(1.7)]{arendt09}), implies that $d_1=d_2$ and $|\Omega_1| = |\Omega_2|$. Moreover, since $U$ is invertible, it satisfies assumption (d) (and (c)), in addition to (a) and (b). We may thus apply Lemma~\ref{lem:neumann-form-1} and Lemma~\ref{lem:neumann-form-2}. But since $|\omega|=|\Omega_1|=|\Omega_2|$ and all sets have Lipschitz boundary, we conclude that $\omega=\Omega_2$.

It remains to show that $\beta_2 = \beta_1 \circ \tau|_{\partial\Omega_2}$. To this end, observe that $\tau$ induces a bijective correspondence between $D(\Delta^{\beta_1}_{\Omega_1})$ and $D(\Delta^{\beta_2}_{\Omega_2})$, $f \in D(\Delta^{\beta_1}_{\Omega_1})$ if and only if $f \circ \tau \in D(\Delta^{\beta_2}_{\Omega_2})$. For any such $f$, we have that $\frac{\partial f}{\partial \nu} + \beta_1 f = 0$ $\sigma$-almost everywhere on $\partial\Omega_1$, whence also
\begin{displaymath}
	\frac{\partial (f\circ \tau)}{\partial \nu} + (\beta_1 \circ \tau) (f \circ \tau) = 0
\end{displaymath}
$\sigma$-almost everywhere on $\partial\Omega_2$. But, by the above correspondence, this says exactly that any $g \in D(\Delta^{\beta_2}_{\Omega_2})$, which by definition satisfies the Robin condition $\frac{\partial g}{\partial \nu} + \beta_2 g = 0$ almost everywhere, also satisfies $\frac{\partial g}{\partial \nu} + (\beta_1 \circ \tau) g = 0$ almost everywhere on $\partial\Omega_2$. This implies that $\beta_1 \circ \tau = \beta_2$ almost everywhere and hence everywhere on $\partial \Omega_2$, since $\beta_1$ and $\beta_2$ were assumed continuous.
\end{proof}

\section{Disjointness-preserving operators intertwining Laplacians on $L^2$}
\label{sec:dirichlet-l2}

We now wish to state similar results for $L^2$-spaces, principally but not exclusively for the Dirichlet Laplacian. Since the techniques involved are somewhat different from the $C_0$-case, we will also need a slightly different set of assumptions. In this section we will write $\Delta^D_\Omega$ for the operator $\DLL$ introduced in Definition~\ref{def:dirichlet-laplacian-l2}, and $\lambda_k^D (\Omega)$, $k\geq 1$, for its eigenvalues. We start with the following definition (see \cite[Sections~1 and~3]{arendt02}).

\begin{definition}
An open set $\Omega \subset \R^d$ is called \emph{regular in capacity} if for all $x \in \partial\Omega$ and $r>0$,
\begin{displaymath}
	\capacity ( \Omega \setminus B(z,r)) > 0,
\end{displaymath}
where the capacity $\capacity (A)$ of a set $A \subset \R^d$ is defined by
\begin{displaymath}
	\capacity (A) = \inf \{ \|u\|_{H^1(\R^d)}^2: u \in H^1(\R^d), \, u\geq 1 \text{ in a neighbourhood of } A \}.
\end{displaymath}
\end{definition}

We have the hierarchy Lipschitz boundary $\implies$ regular in topology $\implies$ regular in measure $\implies$ regular in capacity. Lebesgue's cusp is regular in capacity but not Dirichlet regular.

\begin{theorem}
\label{thm:disj-pres-dirichlet-l2-isom}
Suppose $\Omega_1,\Omega_2 \subset \R^d$ are bounded and connected open sets, which are regular in capacity, and there exists a bounded, linear mapping $0 \neq U: L^2(\Omega_1) \to L^2 (\Omega_2)$ such that $U (D(\Delta^D_{\Omega_1})) \subset D(\Delta^D_{\Omega_2})$, and
\begin{enumerate}
\item[(a)] $f\cdot g= 0$ implies $(Uf)\cdot (Ug) = 0$ for all $f,g \in L^2(\Omega_1)$, and
\item[(b)] $U(\Delta f) = \Delta (Uf)$ for all $f \in D(\Delta^D_{\Omega_1})$.
\end{enumerate}
Suppose in addition that $\lambda_1^D (\Omega_1) = \lambda_1^D (\Omega_2)$. Then there exist an isometry $\tau: \R^d \to \R^d$ with $\tau (\Omega_2) = \Omega_1$ and a constant $c \in \C \setminus \{0\}$ such that
\begin{displaymath}
	Uf = cf \circ \tau \qquad \text{for all } f \in L^2(\Omega_1).
\end{displaymath}
\end{theorem}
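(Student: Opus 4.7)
The plan is to combine a standard structure theorem for disjointness-preserving operators on $L^2$ with the intertwining property applied to the first, strictly positive Dirichlet eigenfunction on $\Omega_1$, and then use the assumption $\lambda_1^D(\Omega_1) = \lambda_1^D(\Omega_2)$ to force $U$ to come from a single global isometry. First, by a standard structure theorem for bounded disjointness-preserving operators between $L^2$ spaces (see the references cited in Definition~\ref{def:disjointness-preserving}), there exist measurable functions $h:\Omega_2 \to \C$ and $\tau:\{h\neq 0\} \to \Omega_1$ such that $Uf(y) = h(y)f(\tau(y))$ a.e.\ on $\Omega_2$, with $\Omega_2' := \{y : h(y) \neq 0\}$ of positive measure since $U \neq 0$. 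I next apply $U$ to $\varphi := \varphi_1^{\Omega_1}$, which is strictly positive on the connected set $\Omega_1$: hypothesis (b) forces $-\Delta(U\varphi) = \lambda_1^D(\Omega_1)\,U\varphi$, and $U\varphi \neq 0$ because otherwise $h \cdot \varphi \circ \tau \equiv 0$ would force $h \equiv 0$. Combining $\lambda_1^D(\Omega_1) = \lambda_1^D(\Omega_2)$ with the simplicity of $\lambda_1^D(\Omega_2)$ on the connected set $\Omega_2$ then yields $U\varphi = c\,\varphi_1^{\Omega_2}$ for some $c \in \C \setminus \{0\}$, so
\[
h(y)\,\varphi(\tau(y)) = c\,\varphi_1^{\Omega_2}(y) \qquad \text{a.e.\ on } \Omega_2.
\]
Since $\varphi_1^{\Omega_2}$ is strictly positive on $\Omega_2$, this forces $\Omega_2' = \Omega_2$ up to a null set and $\tau(y) \in \Omega_1$ a.e.

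To apply Lemma~\ref{lem:intertwining-form} I must upgrade $h$ and $\tau$ from measurable to smooth functions on an open subset of $\Omega_2$. For any $f \in C_c^\infty(\Omega_1) \subset D(\Delta^D_{\Omega_1})$, iterating the intertwining gives $\Delta^k (Uf) = U(\Delta^k f) \in L^2(\Omega_2)$ for every $k$, so interior elliptic regularity implies $Uf \in C^\infty(\Omega_2)$. Choosing $f \equiv 1$ on a small ball in $\Omega_1$ around a given point recovers a continuous representative of $h$ on an open subset of $\Omega_2$; choosing $f \equiv x_j$ there recovers continuous $\tau_j$, exactly as in the proof of Lemma~\ref{lem:disjointness-form}. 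Lemma~\ref{lem:intertwining-form} now implies that on each connected component $\omega$ of the resulting open set, $h|_\omega$ is a nonzero constant $c_\omega$ and $\tau|_\omega$ is the restriction of a rigid isometry $\bar{\tau}_\omega$ of $\R^d$ mapping $\omega$ into $\Omega_1$.

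It remains to globalise: only one component should exist, and the corresponding isometry should carry $\Omega_2$ onto $\Omega_1$. Fix such a component $\omega$ with data $(c_\omega, \bar{\tau}_\omega)$ and consider the real-analytic function $F(y) := c_\omega\,\varphi(\bar{\tau}_\omega(y)) - c\,\varphi_1^{\Omega_2}(y)$ on the open set $V := \{y \in \Omega_2 : \bar{\tau}_\omega(y) \in \Omega_1\}$. Since $F$ vanishes on $\omega$, real-analytic continuation propagates the identity on the whole connected component of $V$ containing $\omega$. If $V$ were a proper subset of $\Omega_2$, then at any boundary point $y_0 \in \Omega_2$ with $\bar{\tau}_\omega(y_0) \in \partial\Omega_1$ the first summand of $F$ would tend to zero (because on a capacity-regular domain the first Dirichlet eigenfunction vanishes quasi-everywhere on the boundary), while $c\,\varphi_1^{\Omega_2}(y_0) \neq 0$, a contradiction. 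Hence $\bar{\tau}_\omega(\Omega_2) \subset \Omega_1$; a Courant--Fischer/extension-by-zero argument using $\lambda_1^D(\Omega_1) = \lambda_1^D(\Omega_2)$ (the extension of $\varphi_1^{\Omega_2} \circ \bar{\tau}_\omega^{-1}$ by zero to $\Omega_1$ realises $\lambda_1^D(\Omega_1)$ in the Rayleigh quotient and must therefore be a positive first eigenfunction on $\Omega_1$, impossible unless $\bar{\tau}_\omega(\Omega_2)$ fills $\Omega_1$) then promotes this to $\bar{\tau}_\omega(\Omega_2) = \Omega_1$, which in turn forces $\omega$ to be unique. The hardest step is the regularity promotion in the previous paragraph: unlike in the $C_0$-setting of Sections~\ref{sec:dirichlet}--\ref{sec:neumann}, the structure theorem provides only measurable $h$ and $\tau$, and turning them into continuous objects to which Lemma~\ref{lem:intertwining-form} applies relies crucially on interior elliptic regularity combined with the weak intertwining hypothesis on $D(\Delta^D_{\Omega_1})$.
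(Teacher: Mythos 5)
Your overall strategy (direct weighted-composition representation, evaluation on the first eigenfunction, then analytic continuation) is genuinely different from the paper's, which never touches the measurable $h,\tau$ directly: it passes to the modulus operator $|U|$, shows $|U|$ still intertwines the semigroups (Proposition~\ref{prop:disj-pres-l2-abstract}, using $\lambda_1^D(\Omega_1)=\lambda_1^D(\Omega_2)$ and positivity of $R(t)=S_2(t)|U|-|U|S_1(t)$ tested against the principal eigenfunctions), invokes the earlier lattice-homomorphism result Theorem~\ref{thm:arendt-thm21} to get $|U|f=cf\circ\tau$, and finally recovers $U$ from $|U|$ via Zaanen's theorem. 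Unfortunately your route has gaps at exactly the points you flag as hardest. The regularity promotion does not go through as stated: in $L^2$ the identity $Uf(y)=h(y)f(\tau(y))$ holds only almost everywhere, with the exceptional null set depending on $f$, and $\tau$ is a priori only measurable. To ``choose $f\equiv 1$ on a small ball around a given point'' and read off $h$ as the smooth function $Uf$ on an \emph{open} set, you need to know that $\tau$ maps a neighbourhood of $y_0$ into that ball --- which is precisely the continuity of $\tau$ you are trying to prove. The proof of Lemma~\ref{lem:disjointness-form} avoids this circularity only because there the formula holds pointwise \emph{everywhere} and point evaluation is a bounded functional on $C_0$; neither is available here. (A repair is possible --- e.g.\ using the uniform interior elliptic estimate $|Uf(y)|\le C_K(\|f\|_{L^2}+\|\Delta^k f\|_{L^2})$ to make $y\mapsto(f\mapsto Uf(y))$ a continuous family of distributions and then identifying its weak-$*$ limits with multiples of Dirac measures via the eigenfunction identity --- but this is substantial work you have not done.)

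Two further steps fail under the stated hypotheses. First, your boundary contradiction assumes that $\varphi=\varphi_1^{\Omega_1}$ tends to zero at the relevant points of $\partial\Omega_1$; this requires Dirichlet (Wiener) regularity, which is strictly stronger than regularity in capacity --- the paper explicitly notes that Lebesgue's cusp is regular in capacity but not Dirichlet regular, and there the eigenfunction does not vanish in the limit at the cusp. ``Vanishes quasi-everywhere on the boundary'' (true for the quasi-continuous $H^1_0$ representative) does not give vanishing of the limit along every sequence from inside. Second, the concluding ``which in turn forces $\omega$ to be unique'' is not established: you show that the single isometry $\bar{\tau}_\omega$ extends to map all of $\Omega_2$ onto $\Omega_1$, but this says nothing about $\bar{\tau}_\omega(\omega)$ itself and does not preclude other components $\omega'$ carrying \emph{different} isometries $\bar{\tau}_{\omega'}$ and satisfying the same scalar identity $c_{\omega'}\varphi(\bar{\tau}_{\omega'}(y))=c\,\varphi_1^{\Omega_2}(y)$; Example~\ref{ex:not-yet}(a) shows such multi-component configurations really occur and are excluded here only through the hypothesis $\lambda_1^D(\Omega_1)=\lambda_1^D(\Omega_2)$, which your uniqueness step never actually exploits. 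Also, your final extension-by-zero argument yields only $|\Omega_1\setminus\bar{\tau}_\omega(\Omega_2)|=0$, and regularity in capacity (unlike regularity in measure) is too weak to upgrade a measure-zero difference to equality; one must argue with capacity (quasi-continuity of the extended eigenfunction) instead.
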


By Proposition~\ref{prop:unitary}, the condition $\lambda_1^D(\Omega_1)=\lambda_1^D(\Omega_2)$ is much weaker than invertibility of $U$. The condition of regularity in capacity is optimal for results of this kind. Indeed, to each open set $\Omega \subset \R^d$ there exists a unique open set $\widetilde\Omega \subset \R^d$ which is regular in capacity and such that $\Omega \subset \widetilde\Omega$ and $\capacity (\widetilde\Omega \setminus \Omega) = 0$. This implies that $L^2 (\Omega) = L^2 (\widetilde\Omega)$ and $\Delta^D_{\Omega_1} = \Delta^D_{\Omega_2}$. In particular, as in Theorem~\ref{thm:disj-pres-dirichlet-c0-isom} and Corollary~\ref{cor:dirichlet-c0-kac-disj}, we may drop the assumption of regularity in capacity at the cost of allowing $\tau^{-1}(\Omega_1)$ and $\Omega_2$ to differ by a set of capacity zero. 

The proof of Theorem~\ref{thm:disj-pres-dirichlet-l2-isom} will be based on a more abstract result which will also allow us to treat the Neumann and Robin Laplacians (albeit under stronger assumptions than those of Theorem~\ref{thm:disj-pres-dirichlet-l2-isom}; more precisely, in place of the condition $\lambda_1^D (\Omega_1) = \lambda_1^D (\Omega_2)$ we will require the invertibility of $U$).

The idea of the proof consists showing that the modulus operator $|U|$ of $U$ exists and satisfies the same conditions. Since this operator is positive, we may then apply a previous result of one of the authors \cite[Theorem 2.1]{arendt01} to obtain the conclusion for $|U|$. A theorem due to Zaanen \cite{zaanen75} linking a modulus operator acting via multiplication to the original operator will finally yield the result for $U$. For convenience of reference, we reproduce \cite[Theorem 2.1]{arendt01} here.

\begin{theorem}[\cite{arendt01}, Theorem 2.1]
\label{thm:arendt-thm21}
Suppose $\Omega_1, \Omega_2 \subset \R^d$ are open, connected and regular in capacity. Assume that there exists a linear operator $0 \neq A: L^2(\Omega_1) \to L^2 (\Omega_2)$ satisfying
\begin{enumerate}
\item[(a)] $|Af|=A|f|$ for all $f \in L^2(\Omega_1)$, and
\item[(b)] $A e^{t\Delta^D_{\Omega_1}} = e^{t\Delta^D_{\Omega_2}}A$ for all $t\geq 0$.
\end{enumerate}
Then there exist an isometry $\tau : \R^d \to \R^d$ and a constant $c>0$ such that $\tau(\Omega_2) = \Omega_1$ and $Af=cf\circ\tau$.
\end{theorem}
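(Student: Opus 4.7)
My strategy would proceed in three stages: a measurable representation of $A$ as a weighted composition, an upgrade to smoothness via the heat semigroup, and finally a globalisation step using the connectedness and capacity regularity hypotheses. The identity $|Af|=A|f|$, combined with the polarisation $f\vee g=\tfrac12(f+g+|f-g|)$, shows that $A$ preserves all lattice operations and in particular is positive. The classical representation theorem for Riesz homomorphisms between $L^p$-spaces (see e.g.\ \cite{abramovich92}) then supplies a measurable weight $h:\Omega_2\to[0,\infty)$ and a measurable map $\tau:\{h>0\}\to\Omega_1$, unique up to null sets, such that $Af(y)=h(y)f(\tau(y))$ for almost every $y\in\Omega_2$ and every $f\in L^2(\Omega_1)$. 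Since $A\neq 0$, the set $\omega:=\{h>0\}^\circ$ is nonempty.

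To give $h$ and $\tau$ more structure I would invoke (b) in its semigroup form: $A(e^{t\Delta^D_{\Omega_1}}f)=e^{t\Delta^D_{\Omega_2}}(Af)$. Hypoellipticity makes both sides $C^\infty$ in the respective interiors for every $t>0$ and every $f\in L^2$. Choosing a countable family of $f$'s whose smoothings $e^{t\Delta^D_{\Omega_1}}f$ separate points and derivatives on relatively compact subsets of $\Omega_1$, one extracts continuous, and then smooth, representatives of $h$ and $\tau$ on $\omega$ with $\tau(\omega)\subset\Omega_1$. The infinitesimal form of (b) yields an intertwining $A\Delta f=\Delta Af$ on a dense subspace; inserting the representation of $A$ and carrying out the distributional computation of Lemma~\ref{lem:intertwining-form} (which, as noted after that lemma, applies on $L^p$ rather than only on $C$) then forces $h$ to be a positive constant and $\tau$ a Euclidean isometry on each connected component of $\omega\cap\tau^{-1}(\Omega_1)$.

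The main obstacle lies in the final globalisation step. Connectedness of $\Omega_2$ together with a component-gluing argument like that used in the proof of Theorem~\ref{thm:disj-pres-neumann-char-c} yields a single positive constant $c$ and a single affine isometry $\bar\tau:\R^d\to\R^d$ extending $\tau$; connectedness of $\Omega_1$ and the boundary argument from Theorem~\ref{thm:disj-pres-dirichlet-char-c0} then give $\bar\tau(\Omega_2)=\Omega_1$. The delicate point is that regularity in capacity of both domains is precisely what guarantees $\bar\tau(\Omega_2)$ and $\Omega_1$ agree as open sets, and not merely modulo a set of capacity zero (which would be invisible to $L^2$ and to $\Delta^D_{L^2}$). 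Without regularity in capacity the operator $A$ could trade $\Omega_i$ for a capacity-equivalent open set, so the conclusion would only hold modulo capacity, in the spirit of the caveat the paper records after Theorem~\ref{thm:disj-pres-dirichlet-l2-isom}.
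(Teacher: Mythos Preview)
The paper does not prove this statement: it is quoted verbatim from \cite[Theorem~2.1]{arendt01} ``for convenience of reference'' and used as a black box in the proof of Theorem~\ref{thm:disj-pres-dirichlet-l2-isom}. There is therefore no in-paper proof to compare against; your proposal is an attempt to supply a proof of a result the authors simply import.

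As a sketch toward a proof, your three-stage plan is broadly the right architecture, and in fact mirrors the strategy of \cite{arendt01,arendt02}. Two places would need more care before it stands on its own. First, the passage from the measurable weighted-composition representation to smooth representatives of $h$ and $\tau$ is not as automatic as you suggest: hypoellipticity of $e^{t\Delta^D_{\Omega_2}}(Af)$ gives smoothness of $Af$ in $\Omega_2$, but extracting smooth $h$ and $\tau$ from $h(y)f(\tau(y))$ requires an argument, since $h$ and $\tau$ are only defined modulo null sets and the test functions $e^{t\Delta^D_{\Omega_1}}f$ do not have compact support in $\Omega_1$. Second, your globalisation borrows the boundary argument from Theorem~\ref{thm:disj-pres-dirichlet-char-c0}, but that argument relies essentially on the $C_0$ property, i.e.\ on pointwise vanishing at $\partial\Omega_2$; in the $L^2$ setting you must instead use that $A$ maps $D(\Delta^D_{\Omega_1})\subset H^1_0(\Omega_1)$ into $D(\Delta^D_{\Omega_2})\subset H^1_0(\Omega_2)$, which is where capacity (rather than topology) enters. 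Your closing paragraph correctly identifies that regularity in capacity is what pins down the open sets exactly, but the route to $\bar\tau(\partial\omega)\subset\partial\Omega_1$ has to go through $H^1_0$ and quasi-continuous representatives rather than through the $C_0$ argument you cite.
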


In order to give the promised abstract result, we first need some preparation. For $j=1,2$ we let $\Omega_j \subset \R^d$ be bounded and connected open sets and we suppose that $A_j$ are self-adjoint operators on $L^2(\Omega_j)$ which are bounded from below and have compact resolvent. We denote by $(\lambda_n (A_j))_{n\in\N}$ the eigenvalues of $A_j$ ordered as an increasing sequence and repeated according to their necessarily finite multiplicities; we thus have $\lim_{n\to\infty} \lambda_n (A_j) = \infty$.

Denote by $S_j$ the semigroup generated by $-A_j$, $j=1,2$. In addition to the above assumptions on $A_j$, we suppose that $S_j$ is \emph{positive} and \emph{irreducible}. This means that $S_j (t) \geq 0$ for all $t \geq 0$, $j=1,2,$, and that $S_j$ does not leave invariant any non-trivial closed ideal $J$ of $L^2(\Omega_j)$. (Here $J$ is called a \emph{closed ideal} of $L^2(\Omega_j)$ if there exists a measurable set $\omega \subset \Omega_j$ such that $J = \{ f \in L^2(\Omega_j): f|_\omega = 0\}$; $J=0$ and $L^2(\Omega)$ are the trivial closed ideals.) A consequence of this assumption is that $\lambda_1 (A_j)$ is a \emph{principal eigenvalue}, that is, that the eigenspace belonging to $\lambda_1 (A_j)$ is one-dimensional, $\lambda_1 (A_j) < \lambda_2 (A_j)$, and that there is a unique eigenfunction $\psi_j \in D(A_j)$ such that $\psi_j > 0$ and $\|\psi_j\|_{L^2(\Omega_j)}=1$, $j=1,2$. Moreover, $\psi_j \gre 0$, by which we mean that $\psi_j(x)>0$ almost everywhere, $j=1,2$. We call $\psi_j$ the \emph{principal eigenfunction} of $A_j$.

Now let $U: L^2(\Omega_1) \to L^2(\Omega_2)$ be a disjointness-preserving operator. Then, as intimated above, there exists a unique positive operator $|U|: L^2(\Omega_1) \to L^2(\Omega_2)$ such that $|U|f=|Uf|$ for all $f\geq 0$. This \emph{modulus operator} $|U|$ is a lattice homomorphism (see \cite{arendt83,arendt01} for these properties).

With this background we can now formulate and prove the following result from which congruence will later follow. See Remark~\ref{rem:intertwining} concerning the following intertwining property \eqref{eq:l2-abstract-intertwining}.

\begin{proposition}
\label{prop:disj-pres-l2-abstract}
For $j=1,2$ let $\Omega_j \R^d$ be a bounded and connected open set and let $A_j$ be a self-adjoint operator on $L^2(\Omega_j)$ which is bounded from below and has compact resolvent. Assume that the semigroup generated by $-A_j$ is positive and irreducible, $j=1,2$, and that $\lambda_1 (A_1) = \lambda_1 (A_2)$. Let $0 \neq U: L^2(\Omega_1) \to L^2(\Omega_2)$ be a disjointness-preserving operator such that
\begin{equation}
\label{eq:l2-abstract-intertwining}
	US_1(t) = S_2(t)U \qquad \text{for all } t\geq 0.
\end{equation}
Then $|U|S_1(t) = S_2(t)|U|$ for all $t \geq 0$. If in addition there exist $c>0$ and an isometry $\tau: \R^d \to \R^d$ such that $\tau(\Omega_2) = \Omega_1$ and
\begin{equation}
\label{eq:modulus-operator-desired-form}
	|U|f = cf\circ\tau \qquad \text{for all } f \in L^2(\Omega_1),
\end{equation}
then there exists a constant $c_1 \in \C$ such that
\begin{displaymath}
	Uf = c_1 f \circ \tau \qquad \text{for all } f \in L^2(\Omega_1).
\end{displaymath}
\end{proposition}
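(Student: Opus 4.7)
My plan is to prove the two statements in sequence. Throughout I use that a bounded disjointness-preserving operator between Banach lattices has a well-defined modulus $|U|$ which is bounded, positive, and a lattice homomorphism satisfying $|U|(|f|) = |Uf|$ for all $f$ (see \cite{abramovich92,pagter00,schep16}); in particular $U \neq 0$ gives $|U| \neq 0$. Differentiating the intertwining identity $US_1(t) = S_2(t)U$ at $t=0$ yields $U(D(A_1)) \subset D(A_2)$ with $A_2U = UA_1$ on $D(A_1)$, so $U\psi_1$ is either zero or an eigenfunction of $A_2$ for $\lambda_1(A_1) = \lambda_1(A_2)$; simplicity of the principal eigenvalue then forces $U\psi_1 = \alpha\psi_2$ for some $\alpha \in \C$. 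To rule out $\alpha = 0$ I would truncate: if $|U|\psi_1 = 0$, then for any $f \geq 0$ in $L^2(\Omega_1)$ the approximants $f_n := f \wedge n\psi_1$ satisfy $0 \leq |U|f_n \leq n|U|\psi_1 = 0$, and since $\psi_1 > 0$ a.e.\ we have $f_n \to f$ in $L^2$ by dominated convergence, forcing $|U|f = 0$. Linearity then yields $|U| = 0$, contradiction; hence $\alpha \neq 0$ and $|U|\psi_1 = |\alpha|\psi_2$.

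The core of Claim 1 is to introduce the bounded linear operator $T_t := S_2(t)|U| - |U|S_1(t)$ and show it is positive with $T_t\psi_1 = 0$, so that a second application of the truncation argument forces $T_t = 0$. For $f \geq 0$,
\begin{displaymath}
T_t f = S_2(t)|Uf| - |U(S_1(t)f)| = S_2(t)|Uf| - |S_2(t)Uf| \geq 0,
\end{displaymath}
by the pointwise inequality $|S_2(t)g| \leq S_2(t)|g|$ valid for complex-valued $g$ under the positive semigroup. The identity $T_t\psi_1 = 0$ follows immediately from $|U|\psi_1 = |\alpha|\psi_2$ and $S_j(t)\psi_j = e^{-\lambda_1 t}\psi_j$. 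Exactly the same truncation as in the previous paragraph then yields $T_t f = 0$ for all $f \geq 0$, and complex linearity extends this to $T_t = 0$.

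For Claim 2, assume $|U|f = cf \circ \tau$, and for $g \geq 0$ in $L^2(\Omega_1)$ consider the pointwise polar decomposition $Ug = \phi_g |Ug|$ with $|\phi_g| = 1$ a.e.\ on $\{|Ug| > 0\}$. Using $\psi_1 + g \geq 0$ together with the lattice homomorphism property,
\begin{displaymath}
|\alpha\psi_2 + \phi_g |Ug|| = |U(\psi_1 + g)| = |U|(\psi_1 + g) = |\alpha|\psi_2 + |Ug| \quad \text{a.e.}
\end{displaymath}
Since $\psi_2 > 0$ a.e., tightness in the triangle inequality applied pointwise to the complex numbers $\alpha\psi_2(y)$ and $\phi_g(y)|Ug(y)|$ forces $\phi_g = \alpha/|\alpha|$ a.e.\ on $\{|Ug| > 0\}$. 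Consequently $Ug = (\alpha/|\alpha|)|Ug| = (c\alpha/|\alpha|)\, g \circ \tau$, and complex linearity extends this to $Uf = c_1 f \circ \tau$ for all $f \in L^2(\Omega_1)$, with $c_1 := c\alpha/|\alpha|$.

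The main obstacle is Claim 1, where no pointwise representation of $U$ is available in the $L^2$ setting. The crucial trick is to recast the target identity $S_2(t)|U|=|U|S_1(t)$ as the vanishing of the positive operator $T_t$, and to reduce that vanishing to the single equation $T_t\psi_1 = 0$ supplied by the Perron--Frobenius data; everything else is soft analysis. Given Claim 1, Claim 2 is more elementary: the candidate phase $\alpha/|\alpha|$ is forced by the action of $U$ on $\psi_1$, and disjointness preservation together with the pointwise triangle inequality pin it down on all of $\Omega_2$.
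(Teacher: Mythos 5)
Your proof is correct. The first half (the identity $|U|S_1(t)=S_2(t)|U|$) follows essentially the paper's route: both arguments form the positive operator $S_2(t)|U|-|U|S_1(t)$, kill it on the strictly positive principal eigenfunction $\psi_1$, and then propagate the vanishing to all of $L^2(\Omega_1)_+$ by the truncation $f\wedge n\psi_1$. The only difference there is how $R(t)\psi_1=0$ is obtained: you compute it directly from $|U|\psi_1=|\alpha|\psi_2$ and the eigenvalue equation, whereas the paper tests the positive element $R(t)\psi_1$ against the strictly positive $U\psi_1$ and uses self-adjointness of $S_2(t)$; both are valid, and yours is marginally more economical since you have already identified $|U|\psi_1$. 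The second half is where you genuinely diverge. The paper conjugates $U$ by $f\mapsto f\circ\tau^{-1}$, invokes Zaanen's theorem on local operators \cite{zaanen75} (cf.\ also \cite{arendt05}) to write the resulting operator as multiplication by some $k\in L^\infty$ with $|k|=c$, and then reuses the intertwining via Lemma~\ref{lem:intertwining-form} to force $k$ to be constant. You instead exploit equality in the pointwise triangle inequality for $|U(\psi_1+g)|=|U|(\psi_1+g)$ with $g\ge 0$, which pins the phase of $Ug$ to $\alpha/|\alpha|$ wherever $Ug\neq 0$ because $\alpha\psi_2$ is nowhere zero. This is more elementary (no Zaanen, no second appeal to the intertwining) and in fact proves the sharper statement $U=(\alpha/|\alpha|)\,|U|$ under the hypotheses of the first claim alone, from which the desired form of $U$ drops out immediately once $|U|f=cf\circ\tau$ is assumed. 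Both approaches are sound; yours trades the general machinery of local operators for a short direct argument tailored to the presence of a strictly positive function in the range of $U$ restricted to the positive cone.
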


\begin{proof}
1. Let $\psi_1$ be the principal eigenfunction of $A_1$. We claim that $U\psi_1 =: \psi_2$ is an eigenfunction of $A_2$ for the eigenvalue $\lambda_1 (A_2) = \lambda_1 (A_1) =: \lambda_1$. Indeed,
\begin{equation}
\label{eq:semigroup-intertwining-principals}
	S_2(t)\psi_2 = S_2(t)U\psi_1 = US_1(t)\psi_1 = e^{\lambda_1 t}U\psi_1 = e^{\lambda_1 t}\psi_2.
\end{equation}
Assume for a contradiction that $\psi_2=0$; then $|\psi_2| = |U|\psi_1 = 0$. Now let $0 \leq f \in L^2(\Omega_1)$. Since $\psi_1 \gre 0$, we have that $f = \lim_{n\to \infty} f \wedge n\psi_1$, where $f \wedge n\psi_1 := \inf \{f, n\psi_1\}$. But $|U|(f \wedge n\psi_1) \leq n|U|\psi_1 = 0$, whence $|U|f = \lim_{n\to\infty} |U|(f\wedge n\psi_1) = 0$. This means that $|U|=0$, and hence also $U=0$, contradicting our assumptions. This shows that $\psi_2 \neq 0$, and so \eqref{eq:semigroup-intertwining-principals} implies that $\psi_2$ is indeed an eigenfunction for the principal eigenvalue $\lambda_1$ of $A_2$. Replacing $U$ by $c_2 U$ for some $c_2 \in \C$ if necessary, we may assume that $\psi_2$ is the principal eigenfunction of $A_2$; in particular, $\psi_2 \gre 0$.

2. We claim that $|U|S_1(t) = S_2(t)|U|$ for all $t \geq 0$. To see this, we observe that for any $0 \leq f \in L^2(\Omega_1)$ and any $t\geq 0$ we have
\begin{displaymath}
	|U|S_1(t)f = |US_1(t)f| = |S_2(t)Uf| \leq S_2(t)|U|f
\end{displaymath}
since $S_1(t),S_2(t)\geq 0$. Thus the operator $R(t):= S_2(t)|U| - |U|S_1(t)$ is positive, i.e., $R(t)\geq 0$. In order to show that $R(t)=0$, we recall that $\psi_1 \gre 0$ and $\psi_2 = U\psi_1 \gre 0$ are eigenfunctions for $\lambda_1$ and compute, using the self-adjointness of $S_2(t)$,
\begin{displaymath}
	\langle R(t)\psi_1, U\psi_1 \rangle = \langle S_2(t)|U|\psi_1,U\psi_1\rangle-\langle |U|S_1(t)\psi_1,U\psi_1\rangle
	= \langle |U|\psi_1,S_2(t)U\psi_1\rangle - e^{\lambda_1 t} \langle |U|\psi_1,U\psi_1\rangle = 0,
\end{displaymath}
where $\langle\cdot,\cdot\rangle$ is the $L^2$-inner product, in this case on $L^2(\Omega_1)$. This shows that $\langle R(t)\psi_1, U\psi_1 \rangle = 0$, where $U\psi_1 \gre 0$. It follows that $R(t) \psi_1 = 0$. Now we argue as in Step 1 to deduce that $R(t)f = 0$ for all $0 \leq f \in L^2(\Omega_1)$; and so $R(t)=0$, proving the claim.

3. Now assume that \eqref{eq:modulus-operator-desired-form} holds for some isometry $\tau$ for which $\tau(\Omega_2)=\Omega_1$ and some $c>0$. We wish to show that $U$ is of the same form (possibly for a different constant). Define $\Phi: L^2(\Omega_2) \to L^2(\Omega_1)$ by $\Phi f:= f \circ \tau^{-1}$. Then $\Psi:= \Phi \circ U \in \mathcal{L} (L^2(\Omega_1))$, and, by assumption on the form of $|U|$,
\begin{displaymath}
	|\Psi f| = |Uf\circ \tau^{-1}| = |U||f\circ \tau^{-1}| = c|f\circ \tau^{-1} \circ \tau| = c|f|
\end{displaymath}
(in the sense of lattices). Hence $\Psi$ is local, i.e., $\Psi f(x)=0$ for almost every $x \in \{y\in\Omega_1: f(y)=0\}$, for all $f \in L^2(\Omega_1)$. A theorem of Zaanen  (\cite{zaanen75}, see also \cite[Proposition~1.7]{arendt05}) now yields the existence of a function $k \in L^\infty (\Omega_1)$ such that $|k|=c$ almost everywhere and $\Psi f=kf$ for all $f \in L^2(\Omega_1)$. Appealing to the definition of $\Psi$,
\begin{displaymath}
	Uf (\tau^{-1}(x)) = k(x) f(x) \qquad \text{for all } x \in \Omega_1 \text{ and } f \in L^2(\Omega_1),
\end{displaymath}
equivalently,
\begin{displaymath}
	Uf(y) = k(\tau(y)) f(\tau(y)) \qquad \text{for all } y \in \Omega_2 \text{ and } f \in L^2(\Omega_1),
\end{displaymath}
where now $h:=k\circ \tau \in L^\infty(\Omega_2)$ with $|h|=|k|=c$ almost everywhere. Finally, the argument of Lemma~\ref{lem:intertwining-form} applied to $\omega_1=\Omega_1$ and $\omega_2=\Omega_2$ implies that $h$ is constant.
\end{proof}

\begin{proof}[Proof of Theorem~\ref{thm:disj-pres-dirichlet-l2-isom}]
Condition (b) implies that
\begin{displaymath}
	US_1(t) = S_2(t)U
\end{displaymath}
for all $t \geq 0$, where $S_1(t):= e^{t\Delta_{\Omega_1}^D}$, $S_2(t) := e^{t\Delta_{\Omega_2}^D}$. Now clearly both operators $-\Delta_{\Omega_j}^D$, $j=1,2$, are self-adjoint, bounded from below and have compact resolvent; moreover, $S_j(t) \geq 0$ and the semigroups $S_j$ are irreducible (see, e.g., \cite{ouhabaz05}).

Thus by Proposition~\ref{prop:disj-pres-l2-abstract} we have $|U|S_1(t)=S_2(t)|U|$ for all $t\geq 0$. Now Theorem~\ref{thm:arendt-thm21} implies that $|U|$ has the desired form. An application of the second part of Proposition~\ref{prop:disj-pres-l2-abstract} finally shows that $U$ also has the desired form.
\end{proof}

This allows us to give a positive result for disjointness-preserving invertible operators in the nature of Corollary~\ref{cor:dirichlet-c0-kac-disj}, but for operators on $L^2$ in place of $C_0$. We recall that the existence of an invertible intertwining operator on $L^2$ is equivalent to the existence of a unitary intertwining operator, cf.~Proposition~\ref{prop:unitary}.

\begin{corollary}
\label{cor:dirichlet-l2-kac-disj}
Suppose $\Omega_1 \subset \R^{d_1}$ and $\Omega_2 \subset \R^{d_2}$ are two open, bounded and connected sets which are regular in capacity. Suppose also that $U: L^2(\Omega_1) \to L^2(\Omega_2)$ is an invertible disjointness-preserving operator intertwining the Dirichlet Laplacians on $L^2 (\Omega_1)$ and $L^2(\Omega_2)$ in the sense of Definition~\ref{def:intertwining}. Then $d_1=d_2=:d$, and there exist an isometry $\tau: \R^d \to \R^d$ with $\tau (\Omega_2) = \Omega_1$ and a constant $c \in \C \setminus \{0\}$, such that
\begin{displaymath}
	Uf = cf \circ \tau \qquad \text{for all } f \in L^2(\Omega_1).
\end{displaymath}
\end{corollary}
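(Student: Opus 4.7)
The plan is to follow essentially the same three-step recipe used in Corollary~\ref{cor:dirichlet-c0-kac-disj}, only with Theorem~\ref{thm:disj-pres-dirichlet-l2-isom} playing the role that Theorem~\ref{thm:disj-pres-dirichlet-c0-isom}(c) played in the $C_0$-setting. First, since $U$ is invertible and intertwines the two Dirichlet Laplacians on $L^2$, condition~(3) of Proposition~\ref{prop:unitary} is satisfied, so this proposition gives that $\Delta^D_{L^2(\Omega_1)}$ and $\Delta^D_{L^2(\Omega_2)}$ are isospectral. In particular their first eigenvalues agree, which is the hypothesis $\lambda_1^D(\Omega_1) = \lambda_1^D(\Omega_2)$ needed to invoke Theorem~\ref{thm:disj-pres-dirichlet-l2-isom}.

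Second, I need to argue that $d_1 = d_2$. For this I would invoke the Weyl asymptotic formula \eqref{eq:weyl}, which applies under our regularity assumptions (regularity in capacity is enough for the eigenvalue counting function to satisfy the Weyl asymptotics; see the references cited in the proof of Corollary~\ref{cor:dirichlet-c0-kac-disj}). Since the isospectrality forces $N_1(\lambda) = N_2(\lambda)$ for all $\lambda$, the limit $\lim_{\lambda\to\infty} N_i(\lambda)/\lambda^{d_i/2}$ coincides, but this limit can only be finite and strictly positive for one value of $d_i$; hence $d_1 = d_2$, which we now call $d$. (As a byproduct one also recovers $|\Omega_1| = |\Omega_2|$, although we will not need it.)

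Third, with $d_1 = d_2 = d$ settled and $\lambda_1^D(\Omega_1) = \lambda_1^D(\Omega_2)$ in hand, I would simply apply Theorem~\ref{thm:disj-pres-dirichlet-l2-isom} to $U$: the disjointness-preservation hypothesis~(a) is given, while the intertwining property~(b) (on $D(\Delta^D_{\Omega_1})$) follows from the definition of ``$U$ intertwines the Dirichlet Laplacians'' in the sense of Definition~\ref{def:intertwining}. The conclusion of Theorem~\ref{thm:disj-pres-dirichlet-l2-isom} is precisely the desired form $Uf = c f\circ\tau$ with $\tau$ an isometry mapping $\Omega_2$ onto $\Omega_1$ and $c\in\C\setminus\{0\}$.

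No step is really an obstacle here, since the heavy lifting has been done in Proposition~\ref{prop:unitary}, in Weyl's law as invoked in Corollary~\ref{cor:dirichlet-c0-kac-disj}, and in Theorem~\ref{thm:disj-pres-dirichlet-l2-isom}. The only point worth double-checking is the applicability of Weyl's law under mere regularity in capacity; but the references \cite{arendt09,birman80} cited in the $C_0$-corollary cover this, and anyway for bounded open sets (with no regularity beyond openness) the leading-order Weyl asymptotic for the $L^2$-Dirichlet Laplacian is classical, so the dimension-matching step goes through verbatim.
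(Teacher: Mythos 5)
Your proposal is correct and follows essentially the same route as the paper: Proposition~\ref{prop:unitary}(3) gives isospectrality, the Weyl asymptotics give $d_1=d_2$, and Theorem~\ref{thm:disj-pres-dirichlet-l2-isom} (whose hypothesis $\lambda_1^D(\Omega_1)=\lambda_1^D(\Omega_2)$ is now available) yields the conclusion. Your side remark that the leading-order Dirichlet Weyl law needs no regularity beyond boundedness and openness is accurate and resolves the only point you flagged.
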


\begin{proof}
By Proposition~\ref{prop:unitary}, the Dirichlet Laplacians on $L^2(\Omega_1)$ and $L^2(\Omega_2)$ have the same spectrum. In particular, $\lambda_1^D (\Omega_1) = \lambda_1^D (\Omega_2)$, and the Weyl asymptotics imply that $d_1=d_2$. Hence we may apply Theorem~\ref{thm:disj-pres-dirichlet-l2-isom}.
\end{proof}

It is remarkable that the analogous result to Theorem~\ref{thm:arendt-thm21} does not hold for manifolds \cite[Example~4.7]{arendt12}, and it is unknown for Neumann or Robin boundary conditions in the Euclidean case. However, it is true for invertible $U$. Thus, while it is unclear whether Theorem~\ref{thm:disj-pres-dirichlet-l2-isom} continues to hold for the Neumann and Robin Laplacians, we can obtain a version of Corollary~\ref{cor:dirichlet-l2-kac-disj} for them. For this we assume that $\Omega_1,\Omega_2 \subset \R^d$ are two bounded Lipschitz domains, take $0\leq \beta \in L^\infty (\partial\Omega_j)$, $j=1,2$, and define $-\Delta_{\Omega_j}^{\beta_j} := -\Delta_{L^2(\Omega_j)}^{\beta_j}$, $j=1,2$, as in Definition~\ref{def:neumann-laplacian-l2}(c) (where we again recall that $\beta_j \equiv 0$ corresponds to the Neumann Laplacian). We also recall that the operators $-\Delta_{\Omega_j}^{\beta_j}$ are self-adjoint and bounded from below, and have compact resolvent; moreover, their semigroups $S_j^{\beta_j}(t):=e^{-t\Delta_{\Omega_j}^{\beta_j}}$ are positive and irreducible (see \cite{ouhabaz05}).

\begin{theorem}
\label{thm:neumann-robin-l2-kac-disj}
Under the above assumptions on $\Omega_j$ and $\beta_j$, $j=1,2$, suppose that there exists an invertible disjointness-preserving operator $U:L^2(\Omega_1) \to L^2(\Omega_2)$ such that
\begin{displaymath}
	S_2^{\beta_2}(t)U = US_1^{\beta_1}(t) \qquad \text{for all } t\geq 0.
\end{displaymath}
Then there exist an isometry $\tau: \R^d \to \R^d$ such that $\tau (\Omega_2) = \Omega_1$ and a constant $c \in \R \setminus \{0\}$ such that
\begin{displaymath}
	Uf = cf\circ \tau \qquad \text{for all } f \in L^2(\Omega_1).
\end{displaymath}
In particular, $\Omega_1$ and $\Omega_2$ are congruent. Moreover, $\beta_2 = \beta_1 \circ \tau|_{\partial\Omega_2}$.
\end{theorem}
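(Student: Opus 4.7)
The plan is to adapt the proof of Corollary~\ref{cor:dirichlet-l2-kac-disj} to the Robin setting. The one obstacle is that Theorem~\ref{thm:arendt-thm21} was used there for the Dirichlet case only, and as the authors note before the statement of the present theorem, its general analogue for the Robin (or Neumann) Laplacian is open; however, precisely in the invertible case it is available from \cite{arendt02}, where order-isomorphism intertwinings of Dirichlet, Neumann and Robin Laplacians are treated simultaneously. The overall route is thus to pass from $U$ to its modulus $|U|$, deduce via Proposition~\ref{prop:disj-pres-l2-abstract} that $|U|$ also intertwines the Robin semigroups, apply the cited result of \cite{arendt02} to $|U|$, and finally return to $U$ using the second half of Proposition~\ref{prop:disj-pres-l2-abstract}.

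To check the hypotheses of Proposition~\ref{prop:disj-pres-l2-abstract} for $A_j := -\Delta^{\beta_j}_{\Omega_j}$, I would note that self-adjointness, lower boundedness and compactness of the resolvent are standard on bounded Lipschitz domains, as are the positivity and irreducibility of the semigroups $S_j^{\beta_j}$ (see, e.g., \cite{ouhabaz05}, already cited in the paragraph preceding the statement). The equality $\lambda_1(A_1) = \lambda_1(A_2)$ of the principal eigenvalues is a direct consequence of the invertibility of $U$ and the intertwining property, via the Robin-Laplacian version of Proposition~\ref{prop:unitary}. Proposition~\ref{prop:disj-pres-l2-abstract} then delivers $|U|S_1^{\beta_1}(t) = S_2^{\beta_2}(t)|U|$ for all $t \geq 0$.

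Since $U$ is both bijective and disjointness-preserving, its inverse shares the same two properties, and a short standard computation gives $|U|^{-1} = |U^{-1}|$; in particular $|U|$ is an order isomorphism from $L^2(\Omega_1)$ onto $L^2(\Omega_2)$. Combined with the intertwining property just established, the result of \cite{arendt02} furnishes an isometry $\tau: \R^d \to \R^d$ with $\tau(\Omega_2) = \Omega_1$ and a constant $c > 0$ such that $|U|f = c f \circ \tau$ for all $f \in L^2(\Omega_1)$. The second conclusion of Proposition~\ref{prop:disj-pres-l2-abstract} then yields a constant $c_1 \in \C \setminus \{0\}$ (with $|c_1| = c$) such that $Uf = c_1 f \circ \tau$.

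Finally, to identify the boundary coefficients, I would reproduce the argument used at the end of the proof of Corollary~\ref{cor:neumann-c0-kac-disj}: the formula $Uf = c_1 f \circ \tau$ together with the intertwining property forces the map $f \mapsto f \circ \tau$ to set up a bijection between $D(-\Delta^{\beta_1}_{\Omega_1})$ and $D(-\Delta^{\beta_2}_{\Omega_2})$. Comparing the Robin boundary condition $\partial f /\partial \nu + \beta_1 f = 0$ on $\partial\Omega_1$ (transported across $\tau$ to $\partial(f \circ \tau)/\partial \nu + (\beta_1 \circ \tau) (f \circ \tau) = 0$ on $\partial\Omega_2$) with the one directly satisfied by elements of $D(-\Delta^{\beta_2}_{\Omega_2})$ forces $\beta_1 \circ \tau|_{\partial\Omega_2} = \beta_2$ $\sigma$-almost everywhere on $\partial\Omega_2$, which is the desired equality within $L^\infty(\partial\Omega_2)$. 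The only genuinely subtle step of the plan is the appeal to the order-isomorphism result for the Robin case; this is exactly why invertibility of $U$ (rather than merely a shared first eigenvalue, as in Theorem~\ref{thm:disj-pres-dirichlet-l2-isom}) must be assumed.
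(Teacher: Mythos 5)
Your proposal is correct and follows essentially the same route as the paper: the paper's (very terse) proof simply observes that the result follows from Proposition~\ref{prop:disj-pres-l2-abstract} together with \cite[Theorem~3.21]{arendt02} --- the order-isomorphism intertwining result for Neumann/Robin Laplacians --- which is exactly the chain you describe (pass to $|U|$, intertwine, apply the cited theorem, return to $U$). The additional details you supply, namely the verification that $|U|$ is an order isomorphism via $|U|^{-1}=|U^{-1}|$ and the direct argument identifying $\beta_2=\beta_1\circ\tau$, are precisely what the paper leaves implicit or delegates to the cited theorem.
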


\begin{proof}
In view of Proposition~\ref{prop:disj-pres-l2-abstract} and the above remarks, this follows from \cite[Theorem~3.21]{arendt02}.
\end{proof}

\bibliographystyle{amsplain}

\providecommand{\bysame}{\leavevmode\hbox to3em{\hrulefill}\thinspace}
\providecommand{\href}[2]{#2}

\end{document}